\theoremstyle{definition} \newtheorem{defn}[equation]{Definition}
\theoremstyle{definition} \newtheorem{prop-def}[equation]{Proposition-Definition}
\theoremstyle{plain} \newtheorem{thm}[equation]{Theorem}
\theoremstyle{plain} \newtheorem{lemma}[equation]{Lemma}
\theoremstyle{plain} \newtheorem{prop}[equation]{Proposition}
\theoremstyle{plain} \newtheorem{cor}[equation]{Corollary}
\theoremstyle{plain} \newtheorem*{thm*}{Theorem}
\theoremstyle{remark} \newtheorem{rmk}[equation]{Remark}
\theoremstyle{definition} \newtheorem{notn}[equation]{Notation}
\theoremstyle{remark} \newtheorem{ex}[equation]{Example}
\numberwithin{equation}{section}
\newtheoremstyle{TheoremNum}
{}{}              %
{\itshape}                      %
{}                              %
{\bfseries}                     %
{.}                             %
{ }                             %
{\thmname{#1}\thmnote{ \bfseries #3}}%
\theoremstyle{TheoremNum}
\newtheorem{thmn}{Theorem}
\newcommand{\F}{\mathbb{F}}
\newcommand{\D}{\mathscr{D}}
\newcommand{\E}{\mathscr{E}}
\newcommand{\M}{\mathcal{M}}
\renewcommand{\L}{\mathcal{L}}
\newcommand{\C}{\mathscr{C}}
\newcommand{\G}{\mathbb{G}}
\newcommand{\Z}{\mathbb{Z}}
\newcommand{\Q}{\mathbb{Q}}
\newcommand{\N}{\mathbb{N}}
\renewcommand{\1}{\mathbf{1}}
\renewcommand{\S}{\mathscr{S}}
\renewcommand{\P}{\mathscr{P}}
\newcommand{\R}{\mathscr{R}}
\newcommand{\bS}{\mathbb{S}}
\newcommand{\bP}{\mathbb{P}}
\newcommand{\id}{\mathrm{id}}
\newcommand{\colim}{\mathrm{colim}}
\newcommand{\cof}{\mathrm{cofib}}
\newcommand{\op}{\mathrm{op}}
\newcommand{\Map}{\mathrm{Map}}
\newcommand{\adj}[4]{\xymatrix{#1 \ar@<.2pc>[r]^-{#3} & \ar@<.2pc>[l]^-{#4} #2 }}
\newcommand{\eqv}[4]{\xymatrix{#1 \ar@<.5pc>[r]^-{#3} & \ar@<.5pc>[l]^-{#4}_-\sim #2 }}
\newcommand{\Fin}{\mathrm{Fin}}
\newcommand{\Lan}{\mathrm{Lan}}
\newcommand{\CAlg}{\mathrm{CAlg}}
\newcommand{\Sp}{\mathrm{Sp}}
\newcommand{\SP}{\mathrm{SP}^\infty}
\newcommand{\Spc}{\mathrm{Sp}^{\textup{cn}}}
\newcommand{\Pic}{\mathrm{Pic}}
\newcommand{\Mod}{\mathrm{Mod}}
\newcommand{\Fun}{\mathrm{Fun}}
\newcommand{\FunL}{\mathrm{Fun}^\mathrm{L}}
\newcommand{\FunR}{\mathrm{Fun}^\mathrm{R}}
\newcommand{\sma}{\wedge}
\newcommand{\res}{\mathrm{res}}
\newcommand{\ev}{\mathrm{ev}}
\newcommand{\cy}{\mathrm{cy}}
\newcommand{\ER}{E_\infty\mbox{-}R}
\newcommand{\End}{\mathscr{E}\textup{nd}}
\renewcommand{\subset}{\subseteq}
\newcommand{\prl}{\mathscr{P}\mathrm{r}^\mathrm{L}}
\newcommand{\prlpt}{\mathscr{P}\mathrm{r}^\mathrm{L}_{\mathrm{Pt}}}
\newcommand{\prlsem}{\mathscr{P}\mathrm{r}^\mathrm{L}_{\mathrm{Sem}}}
\newcommand{\prladd}{\mathscr{P}\mathrm{r}^\mathrm{L}_{\mathrm{Add}}}
\newcommand{\prlst}{\mathscr{P}\mathrm{r}^\mathrm{L}_{\mathrm{St}}}
\newcommand{\Mon}{\mathrm{Mon}_{E_\infty}}
\newcommand{\Grp}{\mathrm{Grp}_{E_\infty}}
\newcommand{\NatCol}{\mathscr{N}\mathrm{at}\mathscr{C}\mathrm{ol}}
\newcommand{\Nat}{\mathscr{N}\mathrm{at}}
\newcommand{\Kan}{\textup{Kan}}
\newcommand{\comma}{,}
\DeclareMathAlphabet{\mathbbe}{U}{bbold}{m}{n}
\def\DDelta{{\Delta}}
\newcommand{\CP}{{\C_{/P}}}
\newcommand{\CPP}{\C_{/P\oplus P}}
\newcommand{\tplus}{\widetilde{+}}
\definecolor{bubblegum}{rgb}{0.99, 0.76, 0.8}
\definecolor{dark-red}{rgb}{0.6,.15,.15}
\definecolor{dark-blue}{rgb}{.1,.1,.65}
\definecolor{dark-green}{rgb}{.1,.65,.1}
\title{Thom spectra, higher $THH$ and tensors in $\infty$-categories}
\author{Nima Rasekh}
\address{{\'E}cole Polytechnique F{\'e}d{\'e}rale de Lausanne, SV BMI UPHESS, Station 8, CH-1015 Lausanne, Switzerland}
\email{nima.rasekh@epfl.ch}
\author{Bruno Stonek}
\address{Institute of Mathematics, Polish Academy of Sciences,  ul. {\' S}niadeckich 8, 00-656 Warszawa, Poland}
\email{bruno@stonek.com}
\author{Gabriel Valenzuela}
\address{Max Planck Institute for Mathematics, Vivatsgasse 7, 53111 Bonn, Germany}
\email{gvalenzuelarg@gmail.com}
\begin{document}
\date{\today}

\begin{abstract}
Let $f:G\to \Pic(R)$ be a map of $E_\infty$-groups, where $\Pic(R)$ denotes the Picard space of an $E_\infty$-ring spectrum $R$. We determine the tensor $X\otimes_R Mf$ of the Thom $E_\infty$-$R$-algebra $Mf$ with a space $X$; when $X$ is the circle, the tensor with $X$ is topological Hochschild homology over $R$. We use the theory of localizations of $\infty$-categories as a technical tool: we contribute to this theory an $\infty$-categorical analogue of Day's reflection theorem about closed symmetric monoidal structures on localizations, and we prove that for a smashing localization $L$ of the $\infty$-category of presentable $\infty$-categories, the free $L$-local presentable $\infty$-category on a small simplicial set $K$ is given by presheaves on $K$ valued on the $L$-localization of the $\infty$-category of spaces. 

If $X$ is a pointed space, a map $g: A\to B$ of $E_\infty$-ring spectra \emph{satisfies $X$-base change} if $X\otimes B$ is the pushout of $A\to X\otimes A$ along $g$. Building on a result of Mathew, we prove that if $g$ is étale then it satisfies $X$-base change provided $X$ is connected. We also prove that $g$ satisfies $X$-base change provided the multiplication map of $B$ is an equivalence. Finally, we prove that, under some hypotheses, the Thom isomorphism of Mahowald cannot be an instance of $S^0$-base change.
\end{abstract}

\maketitle

\section{Introduction}

Topological Hochschild homology ($THH$) of $E_\infty$-ring spectra $R$ is equivalent to $S^1\otimes R$, the tensor of $R$ with the circle $S^1$ \cite{msv}. %
Tensors of $E_\infty$-ring spectra with other spaces also give interesting invariants. For example, consider tensoring with $S^n$ for $n\geq 2$. For ordinary rings, this was first considered by Pirashvili \cite{pirashvili}, who called it ``higher order Hochschild homology'' and used it in relation to Hodge decompositions. One can also consider tensoring with tori $T^n$. In this case, the action of $T^n$ on $T^n\otimes R$ leads to a higher version of topological cyclic homology. In analogy to the $n=1$ case, it is expected to bear a connection to $n$-fold iterated algebraic $K$-theory. See \cite{bcd}, \cite{cdd}.

Sometimes, it is actually easier not to focus on the specific case of $THH(R)$ but instead look at the more general tensors $X\otimes R$. In this paper, we provide an example of this strategy. We will describe the tensors of a space $X$ with Thom $E_\infty$-ring spectra $Mf$. Examples of the latter include various versions of cobordism spectra like complex cobordism $MU$ or periodic complex cobordism $MUP$. %
More precisely, we will prove:

\begin{thmn}[\ref{thm-tensorthom}]
Let $R$ be an $E_\infty$-ring spectrum. Let $G$ be an $E_\infty$-group and $f:G\to \Pic(R)$ be an $E_\infty$-map. Let $X$ be a pointed space. There is an equivalence of $E_\infty$-$R$-algebras
\[X\otimes_R Mf \simeq Mf \sma \bS[X\odot G].\]
\end{thmn}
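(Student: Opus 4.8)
The plan is to derive the formula from one structural input about Thom spectra: that the assignment $f \mapsto Mf$ underlies a \emph{colimit-preserving} functor
\[
  M \colon \Grp(\S)_{/\Pic(R)} \longrightarrow \CAlg_R
\]
(equivalently a symmetric monoidal left adjoint --- the Thom functor is the left Kan extension of the tautological diagram $\Pic(R) \to \Mod_R$, refined to $E_\infty$-algebras via Day convolution). I would take this as established. A colimit-preserving functor between presentable $\infty$-categories automatically commutes with tensors over spaces, because $Y \otimes (-)$ is the colimit of the constant $Y$-indexed diagram; hence
\[
  X \otimes_R Mf \;\simeq\; M\bigl(X \otimes (f \colon G \to \Pic(R))\bigr),
\]
where on the right the tensor is formed in $\Grp(\S)_{/\Pic(R)}$. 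The task is then to identify that tensor, which is a purely formal manipulation in the overcategory.

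The forgetful functor $\Grp(\S)_{/\Pic(R)} \to \Grp(\S)$ creates colimits, so the underlying $E_\infty$-group of $X \otimes (f)$ is the tensor $X \otimes G$ formed in $\Grp(\S)$, and by the universal property its structure map to $\Pic(R)$ is the composite $X \otimes G \xrightarrow{\ \mathrm{fold}\ } \ast \otimes G = G \xrightarrow{\ f\ } \Pic(R)$, where $\mathrm{fold}$ is induced by $X \to \ast$. Now $\Grp(\S) \simeq \Spc$ is additive, so the basepoint of $X$ yields a splitting: the insertion $G = \ast \otimes G \to X \otimes G$ is a section of $\mathrm{fold}$, whence
\[
  X \otimes G \;\simeq\; G \oplus (X \odot G),
\]
where $X \odot G$ --- the tensor of $G$ with the \emph{pointed} space $X$ --- is the cofiber of $G \to X \otimes G$ (on connective spectra, $\Sigma^\infty X \sma G$). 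Under this decomposition $\mathrm{fold}$ is the projection onto $G$, so the structure map $f \circ \mathrm{fold}$ restricts to $f$ on $G$ and to the trivial map on $X \odot G$; since $\oplus$ is simultaneously the coproduct in $\Grp(\S)$, this shows
\[
  X \otimes (f \colon G \to \Pic(R)) \;\simeq\; (f \colon G \to \Pic(R)) \;\sqcup\; (\mathrm{triv} \colon X \odot G \to \Pic(R))
\]
in $\Grp(\S)_{/\Pic(R)}$.

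Applying $M$, which preserves coproducts, and recalling that coproducts in $\CAlg_R$ are relative smash products over $R$, yields
\[
  X \otimes_R Mf \;\simeq\; Mf \sma_R M(\mathrm{triv} \colon X \odot G \to \Pic(R)).
\]
To finish, one invokes that the Thom $\ER$-algebra of a trivial bundle $\mathrm{triv} \colon H \to \Pic(R)$ is the group ring $R \sma \bS[H]$: indeed its underlying $R$-module is $\colim_H R \simeq \Sigma^\infty_+ H \sma R$, and $M$ being symmetric monoidal pins down the $E_\infty$-structure. Substituting $H = X \odot G$ and using $Mf \sma_R (R \sma \bS[X \odot G]) \simeq Mf \sma \bS[X \odot G]$ gives the asserted equivalence of $\ER$-algebras.

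The one non-soft ingredient is the first: upgrading $f \mapsto Mf$ to a colimit-preserving (symmetric monoidal) functor on $\Grp(\S)_{/\Pic(R)}$. This is where the facts about Thom spectra enter, together with a firm grip on the tensors in $\CAlg_R$ and in $\Grp(\S)$ --- the latter obtained through their presentations as localizations of presheaf categories developed in the paper. Granting that, the splitting from additivity of $\Grp(\S)$, the description of the structure map as $f \circ \mathrm{fold}$, and the computation of $M$ on a trivial bundle are all routine.
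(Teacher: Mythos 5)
Your proposal is correct and follows essentially the same route as the paper: you commute the tensor past the Thom functor via colimit-preservation of $M$ (exactly the paper's alternative argument in \cref{rmk-newproof}, with the structure map on the overcategory tensor identified as in \cref{lemma-over}), split $X\otimes G\simeq G\times (X\odot G)$ with the fold map becoming the projection (\cref{cor-tensorsplit}), and identify the Thom algebra of the trivial bundle with $R[X\odot G]$ (\cref{ex-trivialmult}), your coproduct-in-$\Mon(\S)_{/\Pic(R)}$ phrasing being the same content as the paper's use of monoidality of $M$ in (\ref{eq-thommonoidal}). The only step stated more breezily than it deserves is the splitting itself, which the paper justifies not by additivity of $\Grp(\S)$ alone but by passing through connective spectra into $\Sp$ and applying the stable splitting lemma (\cref{lem-splitting_lemma}).
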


Here $\Pic$ denotes the Picard $E_\infty$-group of $R$, $\sma$ denotes the smash product of spectra, $\otimes_R$ denotes the tensor of $E_\infty$-$R$-algebras over spaces, $\odot$ denotes the tensor of $E_\infty$-groups (also known as grouplike $E_\infty$-spaces) over pointed spaces, and $\bS[X\odot G]$ denotes the suspension spectrum of $(X\odot G)_+$ considered as an $E_\infty$-ring spectrum. This reduces the calculation of the homotopy groups of $X\otimes_R Mf$ to that of the unreduced $Mf$-homology groups of $X\odot G$, which is often simpler. For example, in \cite{bcs} the authors determined $THH(Mf)$ as a spectrum, and used Hopkins--Mahowald's interpretation of the Eilenberg--Mac Lane spectra $H\Z$ and $H\F_p$ as Thom spectra to give a new computation of their topological Hochschild homology groups, originally computed by Bökstedt in his foundational manuscript \cite{bokstedt}.

As a consequence of the result above, we get a generalization of the Thom isomorphism theorem of Mahowald \cite{mahowald-thom}, which would here take the form $Mf\sma_R Mf \simeq Mf \sma \bS[G]$. Indeed, that result is obtained by setting $X=*$ in the equivalence \[Mf \sma_R (X\otimes_R Mf) \simeq Mf \sma \bS[X\otimes G]\]
of \cref{ex-thom-iso}. Here $\otimes$ denotes the tensor of $E_\infty$-groups over spaces.

The proof of the theorem is divided in two parts. First, one proves that tensoring with a space and taking Thom spectrum are operations that commute in an adequate sense (\cref{prop-tensorcommutes}). Using an $\infty$-categorical version of the splitting lemma for short exact sequences in abelian categories, we obtain a splitting of $E_\infty$-groups \[X\otimes G \simeq G \times (X\odot G)\] (\cref{cor-tensorsplit}) which we combine with the monoidality of the Thom spectrum construction to finish the proof. When one takes $X=S^1$, the above splitting becomes the well-known splitting of the cyclic bar construction of $G$ as a product of $G$ and the bar construction of $G$. Thus, our proof of the theorem relies on structural properties satisfied by the Thom construction and by tensors in $\infty$-categories, and on a splitting result for tensors of $E_\infty$-groups.\\

Before saying a word about these tensors, we would like to note the differences between the theorem above and the main result of \cite{schlichtkrull-higher}, which is similar. On one hand, a version of the tensor which allows for coefficients in an $Mf$-module is considered by Schlichtkrull. On the other hand, he proves his results for maps $f:G\to BGL_1(\bS)$. Our result is more general in two different ways: first, we consider the whole Picard space instead of only $BGL_1$. This is already an interesting extension, since it allows for non-connective Thom spectra such as  $MUP$ (see \cref{ex-mup}). Second, we allow the Picard space of any $E_\infty$-ring spectrum as a codomain for $f$, instead of only the one of the sphere spectrum. See \cref{rmk-before}, where we also recall the related result of \cite{klang-thom} on factorization homology of Thom spectra.

Note that, at the beginning of Section 4 of \cite{schlichtkrull-higher}, the author sketches a proof of his result, but then notes that ``when trying to make this argument precise, one encounters several technical difficulties'', which he explains. Those technical difficulties are model-categorical in nature, and the author works around them model-categorically as well, using for example different models for $E_\infty$-monoids and introducing a model of the tensor which is homotopy invariant but combinatorially involved. We claim that these complications are mostly side-effects of the rigidity of the model-categorical framework, and in particular of the rigidity of the model for Thom spectra. Our determination of $X\otimes_R Mf$ uses the $\infty$-categorical Thom spectra machinery introduced in \cite{abghr-infty} and further developed in \cite{abg}, as well as the universal property for Thom ring spectra of \cite{barthel-antolin}.

Note as well that Schlichtkrull's theorem features, in lieu of what we have denoted $X\odot G$, the infinite loop space $\Omega^\infty(B^\infty G \sma \Sigma^\infty X)$. We will prove in \cref{prop-tensor_grouplike} that the two constructions coincide: it will be a direct consequence of the formal properties of tensors. In \cref{rmk-kuhn-sp}, we also prove that $X\odot G$ is equivalent to the infinite symmetric product $\SP(X,G)$, a construction studied in \cite{kuhn}.\\

The presence of different tensors like $X\otimes_R Mf$ and $X\odot G$ interacting with each other makes it important to place them on a firm technical ground. Consequently, the first two sections are devoted to their study. The technical foundation for our paper will be that of $\infty$-categories \cite{htt}. As noted by Lurie \cite[4.8]{ha}, the $\infty$-category of presentable $\infty$-categories and left adjoint functors, $\prl$, has a symmetric monoidal product $\otimes$. If $\R$ is a commutative algebra in $\prl$ and $\C$ is a module over it, we say that $\C$ is \emph{tensored} over $\R$. This generalizes the notion of a category being enriched, tensored and cotensored over a symmetric monoidal category to an $\infty$-categorical setting in a succinct way, at least in the presentable case.

As noted by Lurie, the $\infty$-categories of pointed presentable $\infty$-categories and of stable presentable $\infty$-categories are (reflective) localizations of $\prl$, and they are smashing: they are given by $-\otimes \S_*$ and by $-\otimes \Sp$, respectively, where $\S_*$ denotes pointed spaces and $\Sp$ denotes spectra. In \cite{ggn}, the authors noted that both semiadditive (which they call preadditive) and additive presentable $\infty$-categories are similarly smashing localizations of $\prl$, given by $-\otimes \Mon(\S)$ and $-\otimes \Grp(\S)$ respectively; here $\Mon(\S)$ are $E_\infty$-monoids (also known as $E_\infty$-spaces or special $\Gamma$-spaces) and $\Grp(\S)$ are $E_\infty$-groups (very special $\Gamma$-spaces). We will use their developments as the grounding needed for the results presented above, and we will get mileage out of the realization that, if $L,L'$ are two smashing localizations of $\prl$ such that $L'\prl\subseteq L\prl$, then any $L'$-local $\infty$-category is not only tensored over $L'\S$, but also over $L\S$ by restriction of scalars along the map $L\S\to L'\S$ (\cref{prop-res}).

Along the way, we will prove some other $\infty$-categorical results of independent interest. For example, in \cref{prop-dayreflection}, we will give an $\infty$-categorical version of Day's reflection theorem \cite{day-reflection}, which gives equivalent conditions guaranteeing that the localization of a closed symmetric monoidal $\infty$-category is closed symmetric monoidal. Also, we prove that if $L$ is a smashing localization of $\prl$, then $L\S$ is freely generated in $L\prl$ by the monoidal unit of the corresponding symmetric monoidal structure of $L\S$ (\cref{prop-freely-generated}). %
More generally, we prove:

\begin{thmn}[\ref{prop-freelsmod}] Let $K$ be a small simplicial set. Let $L$ be a smashing localization of $\prl$. Then the $\infty$-category of $L\S$-valued presheaves on $K$, denoted by $\P_{L\S}(K)$, is freely generated in $L\prl$ by $K$. More precisely, composition with $v_*\circ j:K\to \P_{L\S}(K)$ induces an equivalence of functor $\infty$-categories
\[\xymatrix@C+1pc{\FunL(\P_{L\S}(K),\C) \ar[r]^-\sim_-{(v_*\circ j)^*} & \Fun(K,\C) } \]
for any $\C\in L\prl$, where $v:\S\to L\S$ is the localization map and $j:K\to \Fun(K^\op,\S)$ is the Yoneda embedding. Here $\Fun^L$ denotes colimit-preserving functors.
\end{thmn}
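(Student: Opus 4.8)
The plan is to realize $\P_{L\S}(K)$ as the $L$-localization of the free presentable $\infty$-category on $K$, and then to transport the universal property of the latter through the closed symmetric monoidal structure of $\prl$ together with \cref{prop-freely-generated}. Write $\P(K):=\Fun(K^\op,\S)$; by \cite[5.1.5.6]{htt}, restriction along the Yoneda map $j\colon K\to\P(K)$ is an equivalence $\FunL(\P(K),\D)\simeq\Fun(K,\D)$, naturally in $\D\in\prl$. The first step is to record the natural equivalence $\P(K)\otimes\C\simeq\Fun(K^\op,\C)$ for $\C\in\prl$ (a standard consequence of \cite[4.8]{ha}); taking $\C=L\S$ gives $\P_{L\S}(K)\simeq\P(K)\otimes L\S$, which equals $L\P(K)$ since $L$ is smashing. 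Naturality of $\P(K)\otimes-\simeq\Fun(K^\op,-)$ in the second variable identifies the localization unit $\eta\colon\P(K)\to\P(K)\otimes L\S\simeq\P_{L\S}(K)$ with post-composition by $v$, i.e.\ with $v_*\colon\P(K)\to\P_{L\S}(K)$; together with the compatibility of the Yoneda map with the unit isomorphism $\C\otimes\S\simeq\C$, this yields $\eta\circ j\simeq v_*\circ j$ as maps $K\to\P_{L\S}(K)$.

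For $\C\in L\prl$ I would then combine the fact that $\FunL(-,-)$ is the internal hom of $\prl$ \cite[4.8]{ha} with \cref{prop-freely-generated} (which gives $\FunL(L\S,\C)\simeq\C$ when $\C$ is $L$-local) to produce the chain of equivalences
\[\FunL(\P_{L\S}(K),\C)\;\simeq\;\FunL(\P(K)\otimes L\S,\C)\;\simeq\;\FunL\bigl(\P(K),\FunL(L\S,\C)\bigr)\;\simeq\;\FunL(\P(K),\C)\;\simeq\;\Fun(K,\C),\]
where the second equivalence is the tensor--hom adjunction of $\prl$, the third is $\FunL(\P(K),-)$ applied to \cref{prop-freely-generated}, and the fourth is $j^*$. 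Unwinding the adjunction and naturality isomorphisms, the composite of the first three equivalences is restriction along $\eta$, whence the full composite is restriction along $\eta\circ j\simeq v_*\circ j$, as required. (One can bypass \cref{prop-freely-generated} here by checking directly that $\eta^{*}\colon\FunL(L\P(K),\C)\to\FunL(\P(K),\C)$ is an equivalence: mapping an arbitrary $\E\in\prl$ into it yields the map $\Map_{\prl}(L(\E\otimes\P(K)),\C)\to\Map_{\prl}(\E\otimes\P(K),\C)$ induced by a localization unit, which is an equivalence because $\C$ is $L$-local --- here one uses $\E\otimes L\P(K)\simeq L(\E\otimes\P(K))$, i.e.\ that $L$ is smashing.)

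The argument is formal, so the main obstacle is bookkeeping rather than mathematics: one must check that the abstract equivalences in the displayed chain are genuinely implemented by precomposition with a fixed map of simplicial sets. This comes down to the two identifications of the first paragraph --- matching $\eta$ with $v_*$, and the Yoneda map with $j$ --- plus careful tracking of the tensor--hom and evaluation equivalences. A secondary point is to confirm that the two cited inputs hold when $K$ is merely a small simplicial set (not necessarily an $\infty$-category): that $\Fun(K^\op,\S)$ has the universal property of the free presentable $\infty$-category on $K$, and that $\P(K)\otimes\C\simeq\Fun(K^\op,\C)$ is natural in $K$; both are covered by \cite{htt,ha}.
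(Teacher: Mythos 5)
Your proposal is correct and takes essentially the same route as the paper: both identify $\P_{L\S}(K)$ with the smashing localization $L\P(K)$ of the free presentable $\infty$-category $\P(K)$, check that under this identification the localization unit corresponds to $v_*$ (the content of the paper's \cref{lemma-lpk}, which you discharge instead by the naturality in $\C$ of the equivalence $\P(K)\otimes\C\simeq\Fun(K^\op,\C)$ from \cite[4.8]{ha}), and then combine the free-cocompletion property of $\P(K)$ with the fact that restriction along a localization unit is an equivalence into $L$-local targets --- your parenthetical mapping-space argument for $\eta^*$ is exactly the paper's justification of $u^*$ via \cref{prop-dayreflection}(2), and routing through the tensor--hom adjunction and \cref{prop-freely-generated} is an equivalent packaging. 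The only divergence is bookkeeping: the paper proves the compatibility triangle of \cref{lemma-lpk} by hand (naturality in $L$ plus a Yoneda computation), whereas you outsource it to the naturality of the presheaf-tensor formula, which is a legitimate shortcut.
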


Having determined $X\otimes Mf$ for a Thom $E_\infty$-ring spectrum $Mf$, we would like to extend this computation to other types of $E_\infty$-ring spectra $B$: for example, to those which admit a map $Mf\to B$. Thus, in the last two sections, we turn to the following more general question. Let $A\to B$ be a morphism of $E_\infty$-ring spectra. For any space $X$, there is an induced map $X\otimes A \to X\otimes B$, and if $X$ is pointed, we can form the square of $E_\infty$-ring spectra
\[\xymatrix{A \ar[r] \ar[d] & B \ar[d] \\ X\otimes A \ar[r] & X\otimes B.}\]
When is this a pushout square? When it is, we say that $A\to B$ \emph{satisfies $X$-base change}. Working over $\Z$, when $X=S^1$ and $A$ and $B$ are ordinary commutative rings, the question amounts to asking when is the natural map $HH(A) \otimes_A B \to HH(B)$ an isomorphism, where $HH$ denotes Hochschild homology. In \cite{weibel-geller}, this is proven to hold when $A\to B$ is étale. This result was generalized to étale extensions of $E_\infty$-ring spectra and topological Hochschild homology in \cite{mathew-thh}. %

Note that a map $g:A\to B$ of $E_\infty$-ring spectra satisfies $S^0$-base change if and only if $g\sma \id:A\sma B \to B \sma B$ is an equivalence. On the other hand, 
the Thom isomorphism theorem of Mahowald, mentioned above, takes the form $Mf\sma Mf \simeq \bS[G] \sma Mf$. One could wonder if this equivalence is induced by a map $g:Mf \to \bS[G]$ that satisfies $S^0$-base change. In \cref{section-thomisonots0} we will prove that this is hardly ever the case, under some reasonable hypotheses on $g$.\\

One can study the question of $X$-base change more generally for a map in a presentable $\infty$-category: the definition is analogous. We will prove the following:

\begin{thmn}[\ref{thm-s1enough}] Let $f:c\to d$ be a map in a presentable $\infty$-category. Let $n\geq 0$. Suppose $f$ satisfies $S^n$-base change. Then $f$ satisfies $X$-base change for any $(n-1)$-connected pointed space $X$.
\end{thmn}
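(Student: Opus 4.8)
The plan is to reformulate ``$f$ satisfies $X$-base change'' as the invertibility of a natural transformation between two connected-colimit-preserving functors on pointed spaces, and then to exploit that the class of $X$ for which it holds is closed under colimits and contains $S^n$.

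Fix the presentable $\infty$-category $\C$ and the map $f\colon c\to d$. Since the monoidal unit of $\S$ acts trivially we have $*\otimes c\simeq c$, and the basepoint of a pointed space $X$ induces a map $c\simeq *\otimes c\to X\otimes c$ that is natural in $X\in\S_*$. I would thus introduce functors $Q,P\colon\S_*\to\C$ by $Q(X)=X\otimes d$ and $P(X)=(X\otimes c)\sqcup_c d$ (the pushout of $d\leftarrow c\to X\otimes c$), together with the natural transformation $\eta\colon P\Rightarrow Q$ assembled from $X\otimes f\colon X\otimes c\to X\otimes d$ and the basepoint map $d\to X\otimes d$; by construction $f$ satisfies $X$-base change exactly when $\eta_X$ is an equivalence. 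The first substantive step is that $P$ and $Q$ preserve connected colimits. For $Q$: the functor $(-)\otimes d\colon\S\to\C$ preserves all colimits and $\S_*\simeq\S_{*/}\to\S$ creates connected colimits. For $P$: factor it as $\S_*\to\C_{c/}\xrightarrow{-\sqcup_c d}\C_{d/}\to\C$, where the middle functor is a left adjoint (pushout along $f$) hence colimit-preserving, the last creates connected colimits, and the first, $X\mapsto(c\to X\otimes c)$, preserves connected colimits by the same reasoning as for $Q$ together with the fact that $\C_{c/}\to\C$ creates connected colimits. Finally $\eta_*$ is an equivalence ($P(*)\simeq d\simeq Q(*)$ compatibly), so the full subcategory $\mathcal W\subseteq\S_*$ spanned by those $X$ with $\eta_X$ an equivalence contains $*$.

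It then follows formally that $\mathcal W$ is closed under connected colimits and retracts (a natural transformation of connected-colimit-preserving functors that is a pointwise equivalence on a connected diagram is an equivalence on the colimit). Combined with $*\in\mathcal W$ this gives closure under finite wedges, which are pushouts over the zero object $*$, and hence, via filtered colimits, under arbitrary wedges; so $\mathcal W$ is closed under \emph{all} small colimits of $\S_*$. By hypothesis $S^n\in\mathcal W$, so $\mathcal W$ contains the smallest full subcategory of $\S_*$ that contains $S^n$ and is closed under colimits. That subcategory is precisely the $\infty$-category of $(n-1)$-connected pointed spaces: this class is stable under colimits in $\S_*$ (wedges, pushouts and filtered colimits of $(n-1)$-connected spaces are again such, by van Kampen and Mayer--Vietoris together with Hurewicz), it is an accessible subcategory closed under colimits and hence presentable, and $S^n$ generates it because $\mathrm{Map}_{\S_*}(S^n,-)\simeq\Omega^n(-)$ is conservative on $(n-1)$-connected spaces. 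Therefore every $(n-1)$-connected pointed space lies in $\mathcal W$, which is the assertion.

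The step I expect to demand the most care is the colimit-preservation of $P$ and $Q$: one has to work systematically with \emph{connected} colimits, since $\S_*\to\S$ does not preserve coproducts or the initial object, and recover wedges only indirectly (finite ones as pushouts over $*$, infinite ones as filtered colimits of these). The remaining ingredient---that the $(n-1)$-connected pointed spaces are generated under colimits by $S^n$---is standard; if one prefers an elementary argument, for $n\geq 1$ it follows from a cell induction using that such a space has a CW model with a single $0$-cell and all other cells of dimension $\geq n$, so that each skeleton is obtained from the previous one as a pushout in $\S_*$ along a pointed map out of a wedge of $(k-1)$-spheres with $k-1\geq n$, while for $n=0$, where $X$ may be disconnected, one instead writes $X$ as the wedge of its based path-component with the spaces $(X_\alpha)_+\simeq\colim_{X_\alpha}S^0$ indexed by the remaining components and concludes from closure under colimits.
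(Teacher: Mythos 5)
Your proposal is correct and, at bottom, runs on the same two ingredients as the paper's proof: closure of the class of pointed spaces $X$ for which $f$ satisfies $X$-base change under colimits in $\S_*$, followed by a cell induction generating $(n-1)$-connected spaces from $S^n$ (trivial low skeleta, a wedge of $n$-spheres, attachments along $S^i$ with $i\geq n$, and $S^{i+1}\simeq *\sqcup_{S^i}*$). The difference lies in how the closure statement is obtained: the paper proves closure under arbitrary $\S_*$-indexed colimits in one stroke (\cref{lemma-basechange}), using the pushout formula expressing $U(\colim_{\S_*}F)$ as the pushout of $\colim_\S(UF)\leftarrow I^\Kan\to *$ together with a cube-pasting argument, whereas you encode base change as invertibility of a natural transformation between $P(X)=(X\otimes c)\sqcup_c d$ and $Q(X)=X\otimes d$, show these preserve a suitable class of colimits, and then bootstrap wedges as pushouts over the zero object plus filtered colimits. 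Your packaging is slightly more conceptual (naturality is automatic) and proves a bit more than needed — only the inclusion ``every $(n-1)$-connected space lies in the colimit closure of $S^n$'' is used, not the equality you assert. One imprecision to correct: in the $\infty$-categorical setting the coslice projections $\S_*\simeq\S_{*/}\to\S$ and $\C_{d/}\to\C$ do \emph{not} create colimits over merely connected index categories, only over weakly contractible ones; for instance $\colim(S^1\xrightarrow{\{S^0\}}\S_*)\simeq (S^1)_+$ while the underlying colimit in $\S$ is $S^1\sqcup S^1$. So ``connected colimits'' should read ``colimits indexed by weakly contractible simplicial sets'' throughout. This does not damage your argument, because every colimit you actually push through the coslices — pushouts, sequential and filtered colimits, geometric realizations — has weakly contractible indexing, and wedges are handled separately exactly as you do; note also that the passage from the trivial $(n-1)$-skeleton to the $n$-skeleton should be treated as in the paper, by observing it is a wedge of $n$-spheres (covered by wedge-closure), rather than as a pushout along $S^{n-1}$, which is not available in your class.
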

In particular, we deduce that étale extensions of $E_\infty$-ring spectra satisfy $X$-base change when $X$ is connected (\cref{thm-etale-change}), and, following Mathew who proved it for $X=S^1$, we give a condition on algebraic $K$-theory which guarantees that a faithful $G$-Galois extension $A\to B$ satisfies $X$-base change for all connected $X$ (\cref{cor-galois-change}). In general, if $A\to B$ is a faithful $G$-Galois extension, then the question of $X$-base change for a pointed space $X$ is equivalent to $X\otimes A \to X\otimes B$ being a faithful $G$-Galois extension, and to $X\otimes A \to (X\otimes B)^{hG}$ being an equivalence (\cref{prop-x-galois}).

When the multiplication map $B\sma_A B\to B$ of an $E_\infty$-$A$-algebra $B$ is an equivalence, we say that $B$ is \emph{solid}. The units $A\to B$ for these types of algebras give another class of maps which satisfy $X$-base change:

\begin{thmn}[\ref{prop-solid}] Let $A$ be an $E_\infty$-ring spectrum and $B$ be a solid $E_\infty$-$A$-algebra. Then the unit $A\to B$ satisfies $X$-base change for any connected pointed space $X$.
\end{thmn}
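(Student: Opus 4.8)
The plan is to reduce, using \cref{thm-s1enough}, to the case $X=S^{1}$, and then to verify $S^{1}$-base change after passing to mapping spaces, where the defining property of a solid algebra becomes the statement that a certain map of spaces is the inclusion of a union of path components. (The mapping-space argument below in fact works uniformly for every connected $X$, so the appeal to \cref{thm-s1enough} is only cosmetic.)

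First I would recall the standard description of solid algebras: $B$ is solid over $A$ precisely when it is an idempotent commutative algebra object of $\Mod_{A}$ in the sense of \cite[4.8.2]{ha}, so that the forgetful functor $\CAlg_{B}\to\CAlg_{A}$ is fully faithful and exhibits $\CAlg_{B}$ as a reflective (in fact smashing) localization of $\CAlg_{A}$ with reflector $B\sma_{A}(-)$. Since $B$ is the initial object of $\CAlg_{B}$, this gives that $\Map_{\CAlg_{A}}(B,C)$ is empty or contractible for every $C\in\CAlg_{A}$ (contractible exactly when $C$ lies in $\CAlg_{B}$). As $\CAlg_{A}=\CAlg_{A/}$, we have $\Map_{\CAlg_{A}}(B,C)=\mathrm{fib}\big(g^{*}\colon\Map_{\CAlg}(B,C)\to\Map_{\CAlg}(A,C)\big)$ over the relevant point, so the upshot is: for every $E_{\infty}$-ring spectrum $C$, the restriction map $g^{*}\colon\Map_{\CAlg}(B,C)\to\Map_{\CAlg}(A,C)$ has empty-or-contractible homotopy fibers, i.e.\ it realizes $\Map_{\CAlg}(B,C)$ as a union of path components of $\Map_{\CAlg}(A,C)$. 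This step is where essentially all the content sits.

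Granting this, fix a connected pointed space $X$ with basepoint $x_{0}$. I must show the canonical map $\varphi\colon(X\otimes A)\sma_{A}B\to X\otimes B$ out of the pushout of $X\otimes A\leftarrow A\xrightarrow{g}B$ is an equivalence. Because $\CAlg$ is presentable, hence cotensored over $\S$, and because mapping out of a pushout is a pullback of mapping spaces, for every $C\in\CAlg$ the map $\varphi^{*}$ is identified with
\[\Map_{\S}\big(X,\Map_{\CAlg}(B,C)\big)\longrightarrow\Map_{\S}\big(X,\Map_{\CAlg}(A,C)\big)\times_{\Map_{\CAlg}(A,C)}\Map_{\CAlg}(B,C),\qquad\widetilde{\psi}\longmapsto\big(g^{*}\widetilde{\psi},\ \widetilde{\psi}(x_{0})\big),\]
the fiber product being formed along evaluation at $x_{0}$. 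By the previous paragraph the target is $\Map_{\S}(X,-)$ applied to the inclusion of a union of path components $\Map_{\CAlg}(B,C)\subseteq\Map_{\CAlg}(A,C)$; since $X$ is connected, a map $X\to\Map_{\CAlg}(A,C)$ factors through $\Map_{\CAlg}(B,C)$ if and only if the image of $x_{0}$ does, so the displayed comparison map is an equivalence. Everything being natural in $C$, $\varphi$ is an equivalence, which is exactly $X$-base change for $g$.

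The main obstacle is the first step — extracting the mapping-space statement from solidity via the identification of solid algebras with idempotent objects — together with the routine-but-necessary check that the comparison map produced by the cotensor manipulation really is the canonical map $\varphi$ out of the pushout. Beyond that, the argument is just the triviality that a connected space cannot distinguish a space from a union of its path components. As a sanity check: for $X=S^{1}$ the conclusion reads $THH(B)\simeq B\sma_{A}THH(A)$, consistent with the relative invariant $THH^{A}(B)=B\sma_{B\sma_{A}B}B$ collapsing to $B$ precisely because $B$ is solid.
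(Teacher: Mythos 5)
Your proposal is correct, but it takes a genuinely different route from the paper's. The paper's proof reduces to $X=S^1$ via \cref{thm-s1enough} and then checks $S^1$-base change directly: it writes $S^1\otimes A\simeq A\sma_{A\sma A}A$, and two $3\times 3$ colimit-interchange arguments together with solidity give $(S^1\otimes A)\sma_A B\simeq B\sma_{A\sma B}B\simeq (B\sma_A B)\sma_{B\sma B}B\simeq B\sma_{B\sma B}B\simeq S^1\otimes B$. You instead prove the stronger, uniform statement that $g^*\colon\Map_{\CAlg(\Sp)}(B,C)\to\Map_{\CAlg(\Sp)}(A,C)$ is $(-1)$-truncated (an inclusion of a union of path components) for every $C$: solidity is equivalent to idempotence of $B$ in $\Mod_A$ (as the paper itself notes after \cref{def-solid}), so by the theory of \cite[4.8.2]{ha} the fiber $\Map_{\CAlg_A}(B,C)$ of $g^*$ is empty or contractible, and then the pullback criterion (\ref{eq-xbase-yon}) recorded in \cref{def-xbase} holds at once for every connected $X$, with no reduction to spheres or skeletal induction. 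Both arguments are complete. Yours is the more conceptual: it is the $(-1)$-truncated analogue of the paper's observation (in the étale section) that $THH$-étale maps are exactly the $0$-cotruncated ones — for which one only gets base change for simply connected $X$ — and it makes transparent why connectedness is precisely the right hypothesis for solid algebras. What it costs is reliance on the idempotent-algebra/smashing-localization machinery of \cite[4.8.2]{ha}; in particular, the emptiness of the fiber when $C$ is not $B$-local deserves to be made explicit (any $A$-algebra receiving an algebra map from $B$ is a $B$-module, hence $B$-local by solidity), since that is exactly where the dichotomy comes from. The paper's proof, by contrast, is self-contained, using only pushout manipulations and the machinery (\cref{thm-s1enough}) already developed there.
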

For example, the inversion by a homotopy element $x\in \pi_*(R)$ in an $E_\infty$-ring spectrum, $R\to R[x^{-1}]$, satisfies $X$-base change for all connected $X$. In particular, in \cref{cor-tensor-localization} we get an equivalence
\[(X\otimes R)[x^{-1}] \simeq X\otimes R[x^{-1}]\]
for any connected $X$. This generalizes \cite[4.12]{stonek-thhku}, which was only for $X=S^1$.

Thus, from the knowledge of $X\otimes Mf$ we can determine $X\otimes (Mf)[x^{-1}]$ for any $x\in \pi_*(Mf)$. As an important example one can consider the presentation given by Snaith \cite{snaith81} for $KU$, namely $KU\simeq \bS[K(\Z,2)][x^{-1}]$ for a certain $x\in \pi_2 \bS[K(\Z,2)]$. We obtain
\[X\otimes KU \simeq KU \sma \bS[X\odot K(\Z,2)],\]
a result related to \cite{stonek-thhku}, see \cref{ex-s0}. Since the similar equivalence of Snaith $MUP \simeq \bS[BU][x^{-1}]$ for a certain $x\in \pi_2 \bS[BU]$ is not an equivalence of $E_\infty$-ring spectra \cite{hahn-yuan}, we cannot proceed as straightforwardly in this case; however, we can still conclude that $THH(MUP) \simeq MUP \sma SU_+$ as spectra,
see \cref{ex-mup2}. On the other hand, considering $MUP$ as an $E_\infty$-Thom spectrum gives us $THH(MUP)\simeq MUP \sma U_+$. This gives an indirect proof that
\[MUP \sma SU_+ \simeq MUP \sma U_+\]
as spectra, i.e. $U$ and $SU$ have isomorphic unreduced $MUP$-homology.

\subsection{Notations and conventions}
Following \cite{htt}, we call \emph{$\infty$-category} a simplicial set such that every inner horn has a filler. Categories will be considered as $\infty$-categories via the nerve functor. If $\C$ is an $\infty$-category and $c,c'$ are objects in $\C$, then we denote by $\Map_\C(c,c')$ the space of arrows from $c$ to $c'$ in $\C$. An $\infty$-category is \emph{pointed} if it has a zero object. We will call a map that factors through the zero object a \emph{trivial map}. A constant functor with value $c$ will be denoted $\{c\}$.

We denote by $\S$ the $\infty$-category of spaces given by the homotopy coherent nerve of the simplicial category of Kan complexes, and by $\S_*$ its pointed counterpart. Adding a disjoint basepoint gives a left adjoint $(-)_+:\S\to \S_*$ to the forgetful functor. %

The $\infty$-category of spectra will be denoted by $\Sp$. It is a closed symmetric monoidal $\infty$-category \cite[4.8.2.19]{ha}. The internal mapping spectrum will be denoted by $\Sp(A,B)$, the smash product of spectra will be denoted by $A \sma B$ and its monoidal unit, the sphere spectrum, by $\bS$. %

An \emph{$E_\infty$-ring spectrum} $R$ is a commutative algebra object in $\Sp$. The $\infty$-category of left $R$-modules will be denoted by $\Mod_R$; it is a symmetric monoidal $\infty$-category with monoidal product $\sma_R$ and monoidal unit $R$ \cite[4.5.2.1]{ha}. Commutative algebra objects therein are \emph{$E_\infty$-$R$-algebras} and are the objects of the $\infty$-category $\CAlg_R$.

We reserve the notation $\otimes$ for the monoidal product in the $\infty$-category of presentable $\infty$-categories (to be introduced below), and for tensors of spaces with objects of an $\infty$-category. A monoidal product in a general monoidal $\infty$-category will be denoted by $\boxtimes$.

\subsection{Acknowledgments} The authors would like to thank Aras Ergus for pointing out an error in  \cref{section-thomisonots0} in 
a previous version of this paper, Tyler Lawson for allowing us to use an argument of his from MathOverflow in \cref{ex-picmup}, Tobias Barthel and Hongyi Chu for our helpful conversations, and the referee for their careful reading and thoughtful comments, in particular for noticing that there was a superfluous hypothesis in \cref{lemma-basechange} and sharing the more general proof with us. We would also like to thank the Max Planck Institute for Mathematics for its hospitality and financial support.

\section{Presentable \texorpdfstring{$\infty$-categories}{infty-categories} and tensors} \label{section-presentable}

The $\infty$-category $\prl$ of presentable $\infty$-categories \cite[5.5]{htt} is a useful tool when it comes to formulating the idea of an $\infty$-category tensored over a symmetric monoidal $\infty$-category. We review this theory, then we turn to (reflective) localizations of $\infty$-categories. We prove an $\infty$-categorical analogue of Day's reflection theorem, which gives equivalent conditions under which the localization of a closed symmetric monoidal $\infty$-category is closed symmetric monoidal: these conditions are automatically satisfied when the localization is smashing. We then turn our attention to smashing localizations of $\prl$, reviewing the theory of \cite[4.8]{ha} and \cite{ggn}. Some important examples of smashing localizations $L$ of $\prl$ are given by the $\infty$-categories of presentable $\infty$-categories which are pointed, semiadditive, additive or stable: we look at the tensors appearing in these situations. Along the way, we make some contributions to the general theory, like proving that the free $L$-local presentable $\infty$-category on a small simplicial set $K$ is given by the category of $L\S$-valued presheaves on $K$.

\subsection{Generalities} 

Following \cite[4.8]{ha}, \cite{ggn} and \cite[2.2]{abg}, we will work with the closed symmetric monoidal $\infty$-category $\prl$ of presentable $\infty$-categories. The colimit-preserving functors $\C\to \D$ (since the $\infty$-categories are presentable, they coincide with the left adjoint functors) are assembled into a presentable $\infty$-category $\FunL(\C,\D)$. These provide the internal homs to $\prl$. %
The mapping spaces are given by their maximal subspaces. The monoidal product of $\prl$ is denoted $\otimes$, and is characterized by the fact that left adjoint functors out of $\C\otimes \D$ are given by functors out of $\C\times \D$ which preserve colimits separately in each variable. The $\infty$-category $\C\otimes \D$ is also canonically equivalent to $\FunR(\C^\op,\D)$ (where the $R$ denotes that the functors are right adjoints). The monoidal unit of $\prl$ is $\S$, the $\infty$-category of spaces. A commutative algebra in $\prl$ is equivalently a presentable closed symmetric monoidal $\infty$-category $\C$, whose monoidal product we will typically denote by $-\boxtimes -$, its monoidal unit by $\1$, and its internal hom by $\C(-,-)$.

\begin{defn} \label{def-tensored} Let $\R$ be a presentable closed symmetric monoidal $\infty$-category. A presentable $\infty$-category $\C$ is \emph{tensored over $\R$} if it is a module over $\R$ in $\prl$. In particular, there is a functor, the \emph{tensor},
\[- \otimes - : \R\times \C\to \C\]
which preserves colimits separately in each variable. 
\end{defn}

\begin{rmk} \label{rmk-cotensor-enrichment}
 Because we are working in $\prl$, a functor preserves colimits if and only if it is a  left adjoint. This means we also have a {\it cotensor},
 $$ (-)^{(-)}: \R^{\op} \times \C \to \C$$
 and an \emph{enrichment} \cite[7.4.13]{gepner-haugseng},
 $$\R(-,-): \C^{\op} \times \C \to \R .$$
 Thus, an $\infty$-category $\C$ tensored over $\R$ is cotensored and enriched over $\R$. Since we will mostly work with tensors, we have chosen to emphasize them in the previous definition. Note that a morphism of $\R$-modules is a colimit-preserving functor that preserves tensors, cotensors and the enrichment.
\end{rmk}

We will denote the $\infty$-category of presentable $\infty$-categories enriched over $\R$ by $(\prl)^\R$. The previous remark gives us a
fully faithful embedding 
$$\Mod_\R(\prl) \hookrightarrow (\prl)^\R.$$

\begin{rmk} \label{rmk-iterated} Let $\C$ be a presentable $\infty$-category tensored over $\R$. Then for any objects $u,v$ in $\R$ and $c$ in $\C$, by manipulating adjunctions we obtain a natural equivalence $(u\boxtimes v) \otimes c \simeq u \otimes (v\otimes c)$, where $\boxtimes$ is the monoidal product of $\R$. %
\end{rmk}

\begin{ex} \label{ex-tensored-itself} In a symmetric monoidal $\infty$-category, any commutative algebra is canonically a module over itself, with action given by multiplication. In particular, any presentable closed symmetric monoidal $\infty$-category is tensored over itself, with tensor given by the monoidal product and cotensor and enrichment given by the internal hom.
\end{ex}

Since $\S$ is the monoidal unit in $\prl$, it is canonically a commutative algebra object in $\prl$ and every presentable $\infty$-category $\C$ is uniquely a module over it. The action is given by a functor $\S\otimes \C\to \C$ (it is an equivalence) 
whose transpose is a functor $\C\to \FunL(\S,\C)$ which takes the object $c$ to a colimit-preserving functor $F_c:\S\to \C$ such that $F_c(*)=c$. If $X$ is a space, then
\begin{equation}\label{eq-Fc}F_c(X)\simeq F_c(\colim(X \stackrel{\{*\}}{\to} \S)) \simeq \colim (X\stackrel{\{*\}}{\to} \S \stackrel{F_c}{\to} \C) = \colim (X\stackrel{\{c\}}{\to} \C).\end{equation}

\begin{defn}\cite[4.4.4.9]{htt} \label{def-tensorwithspace}Let $\C$ be a cocomplete $\infty$-category and $X$ be a space. We define $X\otimes c$, the \emph{tensor} of $c$ with $X$, as
\begin{equation}\label{def-tensor}
X\otimes c \coloneqq \colim (X\stackrel{\{c\}}{\to} \C).\end{equation}
\end{defn}

In particular, $*\otimes c\simeq c$. In conclusion, the tensor of a presentable $\infty$-category $\C$ over $\S$ is a functor
\begin{equation}\label{eq-tensor}-\otimes -: \S\times \C\to \C\end{equation}
which preserves colimits separately in each variable. %
It satisfies
\begin{equation}\label{adj-tensor}\Map_\C(X\otimes c,d)\simeq \Map_\S(X,\Map_\C(c,d))\end{equation}
for all spaces $X$ and objects $c,d$ in $\C$. Indeed, the right adjoint to $-\otimes c:\S\to \C$ is immediately identified to be $\Map_\C(c,-)$ by considering the adjunction equivalence for $X=*$. Observe that the notation $\otimes$ is being used for two different notions, namely the monoidal product of two presentable $\infty$-categories and the tensor of an object in a presentable $\infty$-category with a space.

\begin{rmk} \label{rmk-kanfibrant}
The colimit formula (\ref{def-tensor}) makes sense for any simplicial set $K$, so one can define $K\otimes c$ for any $c\in \C$. In this case, considering $*\in \S$, we get that $K\otimes *$ is a Kan fibrant replacement of $K$, i.e. a left adjoint of the inclusion of Kan complexes into simplicial sets evaluated at $K$. %
We will denote $K\otimes *$ by $K^\Kan$.
\end{rmk}

We will need the following result on the behavior of tensors in over-categories.

\begin{lemma} \label{lemma-over} Let $\C$ be a presentable $\infty$-category. Let $f:G\to K$ and $g:H\to K$ be morphisms in $\C$ and let $X$ be a space. The equivalence
\[\Map_\C(X\otimes G,H)\simeq \Map_\S(X,\Map_\C(G,H))\]
of spaces restricts to an equivalence
\[\Map_{\C_{/K}}(X\otimes G,H)\simeq \Map_\S(X,\Map_{\C_{/K}}(G,H)) \]
where $X\otimes G$ is considered as an object in $\C_{/K}$ via the morphism $\xymatrix{X\otimes G \ar[r]^-{\ast \otimes \id} &\ast \otimes G \simeq G \ar[r]^-f & K }$.
\begin{proof}
Let $\{ f \}: X \to \C_{/K}$ be the constant diagram mapping $X$ to the object $f$ in $\C_{/K}$. By definition, the colimit of $\{ f\}$ satisfies 
\[\Map_{\C_{/K}}(\colim \{ f \},H)\simeq \Map_\S(X,\Map_{\C_{/K}}(G,H)). \]
By \cite[1.2.13.8]{htt}, colimits in $\C_{/K}$ are created via the standard projection $\pi_K: \C_{/K} \to \C$, which directly implies that 
\[\pi_K(\colim \{ f\}) \simeq \colim \pi_K\{ f \} \simeq \colim \{ G \} = X \otimes G.\]
To finish the proof we need to determine the map $X \otimes G \to K$. 
\par 
Notice it suffices to do it for the case $K = G$. Indeed, we have a projection $\pi_f:(\C_{/K})_{/f} \to \C_{/K}$, which is equivalent to the post-composition map 
$f_!: \C_{/G} \to \C_{/K}$. Moreover, the map $\{f\}: X \to \C_{/K}$ lifts to the map $\{ \id_G\} : X \to \C_{/G}$. 
\begin{center}
 \begin{tikzcd}[row sep=0.5in, column sep=0.5in]
  & \C_{/G} \arrow[r, "\sim"] \arrow[dd, "\pi_G" near start] \arrow[dr, "f_!"] & (\C_{/K})_{/f} \arrow[d, "\pi_f"] \\
  X \arrow[ur, "\{\id_G\}"] \arrow[rr, "\{f\}" near start] \arrow[dr, "\{G\}"']& & \C_{/K} \arrow[dl, "\pi_K"]\\
  & \C & 
 \end{tikzcd}
\end{center}
Here the equivalence $\C_{/G} \xrightarrow{ \ \simeq \ } (\C_{/K})_{/f}$ follows from \cite[4.1.1.7]{htt}.
Thus we want to show that the colimit of 
$\{ \id_G \}$ in $\C_{/G}$ is the map $X \otimes G \to * \otimes G \simeq G$. However, this follows immediately from the fact that for any map $\Delta[n] \to G$ the precomposition map  
$\Delta[n] \otimes G \to X \otimes G \to * \otimes G$ has to be $* \otimes \id$ by definition of being a colimiting cocone.
\end{proof}
\end{lemma}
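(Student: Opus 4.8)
The plan is to realize $X \otimes G$, taken in the overcategory $\C_{/K}$, as a colimit of a constant diagram, and then to transport that colimit down to $\C$ via the projection $\pi_K \colon \C_{/K} \to \C$, which creates colimits by \cite[1.2.13.8]{htt}. First I would recall that since $\C_{/K}$ is presentable (it is a slice of a presentable $\infty$-category over an object), the defining adjunction equivalence $\Map_{\C_{/K}}(X \otimes G, H) \simeq \Map_\S(X, \Map_{\C_{/K}}(G,H))$ expresses $X \otimes G$ (in $\C_{/K}$) as $\colim \{f\}$, the colimit of the constant diagram $\{f\} \colon X \to \C_{/K}$ with value the object $f \colon G \to K$. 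Applying $\pi_K$ and using that it creates colimits identifies the underlying object in $\C$: $\pi_K(\colim\{f\}) \simeq \colim(\pi_K \circ \{f\}) = \colim\{G\} = X \otimes G$, where the last colimit is taken in $\C$. So the two candidate objects agree in $\C$; what remains is to pin down the \emph{structure map} $X \otimes G \to K$ and check it is the one in the statement (namely $f$ composed with $*\otimes\id$).

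For the identification of the structure map I would follow the reduction indicated: it suffices to treat $K = G$, $f = \id_G$, because a general $f \colon G \to K$ induces postcomposition $f_! \colon \C_{/G} \to \C_{/K}$, which is equivalent to the projection $(\C_{/K})_{/f} \to \C_{/K}$ via \cite[4.1.1.7]{htt}, and the constant diagram $\{f\}$ lifts along this to the constant diagram $\{\id_G\} \colon X \to \C_{/G}$; since $f_!$ preserves colimits, computing the structure map for $\id_G$ and pushing forward gives the structure map for $f$. Then in the case $K = G$ I would argue that the colimiting cocone on $\{\id_G\}$ in $\C_{/G}$ has its structure map $X \otimes G \to G$ forced: for every simplex $\Delta[n] \to G$ the leg of the cocone is, by the very definition of a cocone in a slice category, a commuting triangle, and matching these against the legs of the colimiting cocone on $\{G\}$ in $\C$ forces the structure map to restrict on each $\Delta[n]\otimes G$ to $*\otimes\id$ followed by the identity of $G$; since $X \otimes G$ is the colimit of these $\Delta[n]\otimes G$, the map is determined and equals $*\otimes\id_G$ (i.e. the canonical $X\otimes G \to *\otimes G \simeq G$).

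Finally, combining the two halves: the equivalence $\Map_{\C_{/K}}(X\otimes G, H) \simeq \Map_\S(X, \Map_{\C_{/K}}(G,H))$ is just the colimit-defining equivalence for $\colim\{f\}$ that we started from, now known to have the correct underlying object and structure map in $\C$; and it is compatible with the corresponding equivalence in $\C$ because $\pi_K$ is a right adjoint (it preserves the relevant mapping spaces, and the forgetful map $\Map_{\C_{/K}}(-,-) \to \Map_\C(\pi_K(-), \pi_K(-))$ is the inclusion of a union of path components). I expect the main obstacle to be the bookkeeping in identifying the structure map $X\otimes G \to K$ coherently — i.e. making precise the phrase ``by definition of being a colimiting cocone'' — since one must argue at the level of the cocone, not just the underlying object, that the map is the expected one; the reduction to $K = G$ via \cite[4.1.1.7]{htt} is exactly what makes this manageable.
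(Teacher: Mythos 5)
Your proposal is correct and follows essentially the same route as the paper's own proof: expressing $X\otimes G$ as $\colim\{f\}$ in $\C_{/K}$, using creation of colimits by $\pi_K$ via \cite[1.2.13.8]{htt}, reducing the identification of the structure map to the case $K=G$ through $f_!\colon\C_{/G}\to\C_{/K}$ and \cite[4.1.1.7]{htt}, and pinning down the map on $\Delta[n]\otimes G$ legs of the colimiting cocone. No substantive differences to report.
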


We now finish this section with an observation about tensors with spaces which have an action of a topological group. If $\C$ is an $\infty$-category and $G$ is a topological group, then one can consider the $\infty$-category of objects of $\C$ with $G$-action, which is $\Fun(BG,\C)$. If $\C$ is a presentable $\infty$-category tensored over a presentable closed symmetric monoidal $\infty$-category $\R$, then by functoriality of the tensor, whenever $X\in \R$ or $c\in \C$ have a $G$-action, then so does $X\otimes c\in \C$, and this is a functorial construction. For example, if $X:BG\to \R$, then the composite $BG \xrightarrow{X} \R \xrightarrow{-\otimes c} \C$ gives $X\otimes c:BG\to \C$. To recover the underlying object of $\C$, precompose the functor with the unique arrow $e: *\to BG$.

Let us take $\R$ to be $\S$. Consider $G$ with its regular $G$-action: we can describe it as %
the left Kan extension of $*\xrightarrow{\{*\}} \S$ along $e:*\to BG$. To avoid confusion, let us denote the resulting functor by $\underline{G}:BG\to \S$. We now claim that, if $c\in \C$, then $\underline{G}\otimes c$ is the free object of $\C$ with $G$-action on $c$. More precisely, we claim:
\begin{prop} \label{prop-bg} For any cocomplete $\infty$-category $\C$ and topological group $G$, there is an adjunction
\begin{equation} \label{eq-bg}\adj{\C\simeq \Fun(*,\C)}{\Fun(BG,\C)}{\underline{G} \otimes -}{e^*}.\end{equation}
\end{prop}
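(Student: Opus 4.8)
The plan is to identify the candidate left adjoint $\underline{G}\otimes-$ with left Kan extension along $e\colon\ast\to BG$, and then invoke the standard adjunction between left Kan extension and restriction.

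First I would note that, since $\C$ is cocomplete, the restriction functor $e^{*}\colon\Fun(BG,\C)\to\Fun(\ast,\C)\simeq\C$ admits a left adjoint $e_{!}$ given by pointwise left Kan extension along $e$ (\cite[4.3.3.7]{htt}): the value of $e_{!}$ at the unique object of $BG$ is a colimit of a constant diagram indexed by the comma $\infty$-category of $e$ over that object, which is equivalent to $\Omega BG\simeq G$, and such colimits exist since $\C$ is cocomplete. Hence it suffices to produce an equivalence $e_{!}\simeq\underline{G}\otimes-$ of functors $\C\to\Fun(BG,\C)$.

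Next I would recall that for $c\in\C$ the functor $F_{c}\coloneqq-\otimes c\colon\S\to\C$ preserves colimits, satisfies $F_{c}(\ast)\simeq c$, and depends functorially on $c$ (see \eqref{eq-Fc} and \cref{def-tensorwithspace}). By definition $\underline{G}\otimes c=F_{c}\circ\underline{G}$, and $\underline{G}=e_{!}(\{\ast\})$. The key input is that left Kan extension along $e$ commutes with postcomposition by any colimit-preserving functor; applying this to $F_{c}$ and the constant functor $\{\ast\}\colon\ast\to\S$, and using $F_{c}\circ\{\ast\}=\{c\}$, yields
\[e_{!}(\{c\})\;\simeq\;F_{c}\circ e_{!}(\{\ast\})\;=\;F_{c}\circ\underline{G}\;=\;\underline{G}\otimes c.\]
Tracking through the functoriality of $c\mapsto F_{c}$ and of $e_{!}$ shows this equivalence is natural in $c$, so $\underline{G}\otimes-\simeq e_{!}$, and in particular $\underline{G}\otimes-\dashv e^{*}$; this is \eqref{eq-bg}.

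The step I expect to be the main obstacle is this commutation of $e_{!}$ with colimit-preserving postcomposition, upgraded to a natural equivalence in $c$ and formulated at the level of functors $BG\to\C$ rather than of underlying objects of $\C$. Objectwise it is transparent: evaluating at the object of $BG$, the left-hand side is $\colim_{G}c$ by the pointwise Kan-extension formula, while the right-hand side is $F_{c}(\colim_{G}\ast)\simeq\colim_{G}c$ because $F_{c}$ preserves colimits. The only genuine work is the coherence, which I would either carry out directly from the pointwise colimit formula of \cite[4.3.3.7]{htt}, or deduce from a general lemma asserting that left Kan extension commutes with colimit-preserving functors.
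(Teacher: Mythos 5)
Your proposal is correct, and at the top level it follows the same strategy as the paper: both reduce \cref{prop-bg} to identifying $\underline{G}\otimes -$ with the left Kan extension functor $e_!$, whose adjunction with $e^*$ is \cite[4.3.3.7]{htt}. The difference is in how the identification $e_!(\{c\})\simeq (-\otimes c)\circ e_!(\{\ast\})$ is justified. The paper proves a bespoke statement, \cref{lemma-kan}, giving a criterion under which a composite of two left Kan extensions computes the Kan extension of the composite; the criterion is a cofinality condition on comma categories phrased via mapping spaces, which in this application reduces to checking that a map is $\id_G$ (your identification of the comma category of $e$ over the object of $BG$ with $\Omega BG\simeq G$ is exactly the same computation). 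You instead invoke the standard fact that postcomposition with a colimit-preserving functor preserves pointwise left Kan extensions, applied to $F_c=-\otimes c$; this is valid, since equivalences of functors out of $BG$ are detected on the unique object, where the comparison map is the canonical colimit comparison $\mathrm{colim}_G\, c \to F_c(\mathrm{colim}_G\, \ast)$, an equivalence because $F_c$ preserves colimits. What each route buys: the paper's lemma is strictly more general (it does not require the intermediate Kan extension to preserve colimits, and the counterexample the paper records after \cref{lemma-kan} shows some hypothesis is genuinely needed), while your route is shorter and leans on a standard preservation statement, at the cost of the coherence/naturality bookkeeping you correctly flag — producing the canonical mate $e_!\circ(F_c)_*\Rightarrow (F_c)_*\circ e_!$ and checking it objectwise, plus naturality in $c$; the paper's own write-up glosses the naturality in $c$ to a comparable degree, so this is not a gap, just the expected residual work.
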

This is a generalization of \cite[IV.2.2]{nikolaus-scholze}, which states (without proof) that, for an $E_\infty$-ring spectrum $A$, the map $A\to S^1\otimes A$ is initial among maps from $A$ to an $E_\infty$-ring spectrum with an $S^1$-action.

To prove the proposition, it suffices to identify $\underline{G}\otimes -$ as the left Kan extension functor along $e$ \cite[4.3.3.7]{htt}. We first prove a general lemma:

\begin{lemma} \label{lemma-kan}
Consider the following diagram of left Kan extensions in cocomplete $\infty$-categories:
 \[
 \begin{tikzcd}[row sep=0.2in, column sep=0.3in]
 \C \arrow[rr, "f", ""{name=V, below}] \arrow[dr, "g", pos=0.6,  ""{name=U, below}] \arrow[dd, "h"'] & & \D \\
 & \E \arrow[ur, "\Lan_gf"'] & \\
 \mathscr{F} \arrow[ur, "\Lan_hg"']& & 
 \arrow[Rightarrow, shorten <= 0.35cm, shorten >= .4cm, "\tau", swap, from=U, to=3-1]
 \arrow[Rightarrow, shorten <= 0.1cm, shorten >= .05cm, from=V, to=2-2]
 \end{tikzcd}
 \]
 Suppose that for every $c\in\C$ and $x\in\mathscr{F}$ the following composition is an equivalence of spaces:
 \begin{equation}\label{eq-kan}\xymatrix{\Map_{\mathscr{F}}(h(c),x) \ar[r]^-{\Lan_hg} & \Map_\E(\Lan_hg(h(c)),\Lan_hg(x)) \ar[r]^-{\tau_c^*} & \Map_\E(g(c),\Lan_hg(x)). }\end{equation}
 Then the universal natural transformation
 \[\Lan_hf \to \Lan_gf \circ \Lan_hg\]
 is an equivalence. 
\end{lemma}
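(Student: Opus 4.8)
The plan is to verify the statement objectwise, via the pointwise colimit formula for left Kan extensions. Write $k:=\Lan_h g\colon\mathscr{F}\to\E$ and let $\tau\colon g\Rightarrow k\circ h$ denote the unit of this Kan extension (all the Kan extensions in sight are assumed to exist, so their pointwise formulas are valid). Since a natural transformation of functors $\mathscr{F}\to\D$ is an equivalence as soon as it is one objectwise, it suffices to fix $x\in\mathscr{F}$ and show that the value at $x$ of the universal comparison,
\[(\Lan_h f)(x)\longrightarrow (\Lan_g f\circ\Lan_h g)(x)=(\Lan_g f)(k(x)),\]
is an equivalence in $\D$. By the pointwise formula \cite[4.3.3]{htt}, the source is the colimit of $f\circ\pi$ over the comma $\infty$-category $\C\times_{\mathscr{F}}\mathscr{F}_{/x}$, with $\pi$ the projection to $\C$, while the target is the colimit of the \emph{same} functor $f\circ\pi$ over $\C\times_{\E}\E_{/k(x)}$. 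Moreover both projections $\C\times_{\mathscr{F}}\mathscr{F}_{/x}\to\C$ and $\C\times_{\E}\E_{/k(x)}\to\C$ are right fibrations, since slice projections are right fibrations \cite[2.1.2]{htt} and right fibrations are stable under base change; under straightening they correspond respectively to the presheaves $\Map_{\mathscr{F}}(h(-),x)$ and $\Map_{\E}(g(-),k(x))$ on $\C$.

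Next I would observe that the composite occurring in \eqref{eq-kan},
\[\Map_{\mathscr{F}}(h(c),x)\xrightarrow{\ k\ }\Map_{\E}\big(k(h(c)),k(x)\big)\xrightarrow{\ \tau_c^{*}\ }\Map_{\E}\big(g(c),k(x)\big),\]
is natural in $c\in\C$ — the first arrow because $h$ and $k$ are functors, the second because $\tau$ is a natural transformation — and hence is a morphism of presheaves on $\C$. Unstraightening it produces a functor $\Phi_x\colon\C\times_{\mathscr{F}}\mathscr{F}_{/x}\to\C\times_{\E}\E_{/k(x)}$ over $\C$ whose fibre over each object $c$ is exactly the map \eqref{eq-kan}. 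By hypothesis every such fibre is an equivalence of spaces, so $\Phi_x$ unstraightens an equivalence of presheaves and is therefore itself an equivalence of $\infty$-categories (straightening is an equivalence, \cite[3.2.0.1]{htt}). Since $\Phi_x$ lies over $\C$ we have $f\circ\pi\circ\Phi_x=f\circ\pi$, so $\Phi_x$ identifies the two diagrams and hence induces an equivalence on their colimits
\[(\Lan_h f)(x)=\colim_{\C\times_{\mathscr{F}}\mathscr{F}_{/x}}f\circ\pi \ \xrightarrow{\ \sim\ }\ \colim_{\C\times_{\E}\E_{/k(x)}}f\circ\pi=(\Lan_g f)(k(x)).\]

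The remaining work — and this is the only part requiring care rather than a new idea — is to check that the presheaves, the morphism \eqref{eq-kan}, the functors $\Phi_x$, and the resulting colimit comparison are all functorial in $x$, so that the objectwise equivalences assemble into a natural equivalence $\Lan_h f\xrightarrow{\sim}\Lan_g f\circ\Lan_h g$; and to confirm that this natural equivalence is the universal comparison transformation, i.e.\ that under the adjunction $\Lan_h\dashv h^{*}$ it transposes to the composite $f\Rightarrow\Lan_g f\circ g\xRightarrow{\,\Lan_g f\,\ast\,\tau\,}\Lan_g f\circ k\circ h$ of the two Kan-extension units. Both points are formal: the $x$-functoriality is inherited from that of $\mathscr{F}_{/(-)}$, of straightening, and of the pointwise formula, while the identification with the universal transformation is a diagram chase tracking the coprojections of the pointwise colimits through $\Phi_x$ and $\tau$ (using functoriality of the colimit functor $\Lan_g f$). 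All the substance of the lemma sits in the middle paragraph, where the hypothesis \eqref{eq-kan} is used precisely to make $\Phi_x$ an equivalence of comma $\infty$-categories; the one thing to stay vigilant about is the variance, since the fibrations in play are right (not left) fibrations and the classifying presheaves are contravariant in $\C$.
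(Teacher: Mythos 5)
Your proposal is correct and follows essentially the same route as the paper: both reduce to the pointwise colimit formula, identify the comparison as induced by the canonical map of comma $\infty$-categories (which are right fibrations over $\C$ classifying $\Map_{\mathscr{F}}(h(-),x)$ and $\Map_{\E}(g(-),\Lan_hg(x))$), and use the fiberwise-equivalence criterion — the paper phrases this as cofinality of the comparison map via \cite[2.2.3.13, 4.1.2.5]{htt}, while you upgrade it to an equivalence of right fibrations via straightening, which is a cosmetic difference. The functoriality-in-$x$ and identification-with-the-universal-map points you flag are treated at the same (implicit) level of detail in the paper.
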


\begin{proof}
 Using the colimit formula for left Kan extensions %
 we get, for $x\in \mathscr{F}$: 
 \[\Lan_hf(x) \simeq \colim(h \downarrow x \to \C \xrightarrow{ \ f \ } \D), \text{ and}\]
 \[(\Lan_gf \circ \Lan_hg)(x) \simeq \colim(g \downarrow \Lan_hg(x) \to \C \xrightarrow{ \ f \ } \D).\]
The natural map $\Lan_hf(x)\to (\Lan_gf\circ \Lan_hg)(x)$ is induced by the natural map between the comma $\infty$-categories $h \downarrow x \to g \downarrow \Lan_hg(x)$ described in (\ref{eq-kan}). To see that the former is an equivalence, it suffices to see that the latter is cofinal. %
 However, both comma $\infty$-categories are the domain of a corresponding right fibration over $\C$. In this case, being cofinal is equivalent to being a fiber-wise equivalence of spaces  \cite[2.2.3.13, 4.1.2.5]{htt}. This is precisely the condition that (\ref{eq-kan}) be an equivalence.
\end{proof}

Before proceeding to the proof of \cref{prop-bg}, let us observe that the map $\Lan_hf\to \Lan_gf \circ \Lan_hg$ is not necessarily an equivalence without extra conditions like (\ref{eq-kan}). First, note that for two spaces $X$ and $Y$, the left Kan extension 
\begin{center}
	\begin{tikzcd}
		* \arrow[r, "\{Y\}"] \arrow[d, "\{X\}"'] & \S \\
		\S \arrow[ur, "\Lan_{\{X\}}\{Y\}"'] 
	\end{tikzcd}
\end{center}
is given by $\Lan_{\{X\}}\{Y\}(Z) \simeq \Map(X,Z) \times Y$.
Now take 
$\C=*$, $\D=\E=\mathscr{F}=\S$, $f=\{*\}$, $g=\{S^0\}$, and $h=\{\emptyset\}$. In this case, $\Lan_hf\simeq \{*\}$, and $\Lan_gf \circ \Lan_hg \simeq \{S^0\times S^0\}$.

\begin{proof}[Proof of \cref{prop-bg}]
It now suffices to observe that we have a diagram of left Kan extensions as follows,
\[
\begin{tikzcd}[row sep=0.2in, column sep=0.4in]
\ast \arrow[rr, "\{c\}", ""{name=V, below}] \arrow[dr, "\{*\}", pos=0.55,  ""{name=U, below}] \arrow[dd, "e"'] & & \C \\
& \S \arrow[ur, "-\otimes c"'] & \\
BG \arrow[ur, "\underline{G}"']& & 
\arrow[Rightarrow, shorten <= 0.35cm, shorten >= .5cm, from=U, to=3-1]
\arrow[Rightarrow, shorten <= 0.1cm, shorten >= .05cm, from=V, to=2-2]
\end{tikzcd}
\]
and that (\ref{eq-kan}) amounts to $G\xrightarrow{\id} G$ in this case.
\end{proof}

\subsection{Localizations}

We quickly review some definitions and results about (reflective) localizations, mostly from \cite[5.2.7]{htt}, \cite[4.8.2]{ha} and \cite{ggn}. Then we prove a version of Day's reflection theorem.

\begin{defn} Let $\C$ be an $\infty$-category. A functor $L:\C\to \D$ is a \emph{localization} if it admits a fully faithful right adjoint $i$. The $\infty$-category $\D$ is equivalent via $i$ to a full subcategory of $\C$ denoted $L\C$, so we often write $L:\C\to L\C$ (or even $L:\C\to \C$) and neglect to mention $i$. The objects of $L\C$ are \emph{$L$-local}. For any $c\in \C$, there is a \emph{localization map} $c\to Lc$ given by the unit of the adjunction.\end{defn}

For any localization $L$, there are natural equivalences $Lc\to LLc$ for $c\in \C$. An object $c\in \C$ is in $L\C$ if and only if the localization map $c\to Lc$ is an equivalence, if and only if for every $c'\in \C$ the localization map $c'\to Lc'$ induces an equivalence of spaces
\[\Map_\C(Lc',c)\simeq \Map_\C(c',c).\]

The following theorem %
is an $\infty$-categorical analogue of Day's reflection theorem \cite{day-reflection}. We remind the reader that $\C(c,d)\in \C$ denotes an internal hom.

\begin{thm} \label{prop-dayreflection}Let $\C$ be a closed symmetric monoidal $\infty$-category. Let $L:\C\to L\C$ be a localization functor. The following are equivalent:
\begin{enumerate}
\item For all $c\in \C$, $d\in L\C$, the localization map $\C(c,d)\to L\C(c,d)$ is an equivalence,
\item For all $c\in \C$, $d\in L\C$, the localization map $c\to Lc$ induces an equivalence $\C(Lc,d)\to \C(c,d)$,
\item For all $c,c'\in \C$, the localization map $c\to Lc$ induces an equivalence $L(c\boxtimes c')\to L(Lc\boxtimes c')$,
\item For all $c,c'\in \C$, the localization maps of $c$ and $c'$ induce an equivalence $L(c\boxtimes c')\to L(Lc\boxtimes Lc')$.
\end{enumerate}
When these equivalent conditions are satisfied, $L\C$ admits a closed symmetric monoidal structure such that $L$ is symmetric monoidal and its right adjoint $i$ is lax symmetric monoidal and closed, i.e. the internal hom in $L\C$ is given by $\C(d,d')$ for all $d,d'\in L\C$. %
\begin{proof}
($2 \Leftrightarrow 3$) Let $c,c'\in \C$, $d\in L\C$. The following diagram commutes,
\[\xymatrix@C-.3cm@R-.3cm{
\Map_{L\C}(L(Lc\boxtimes c'),d) \ar[r] \ar[d]_-\simeq &
\Map_{L\C}(L(c\boxtimes c'),d) \ar[d]^-\simeq \\
\Map_\C(Lc\boxtimes c',d) \ar[d]_-\simeq &
\Map_\C(c\boxtimes c',d) \ar[d]^-\simeq\\
\Map_\C(c',\C(Lc,d)) \ar[r] & \Map_\C(c',\C(c,d))
}\]
so the top arrow is an equivalence if and only if the bottom one is an equivalence, and the Yoneda lemma finishes the proof.

($1\Rightarrow 3$) Let $c,c'\in \C$ and $d\in L\C$. The following diagram commutes,
\[\xymatrix@C-.2cm@R-.3cm{
\Map_{L\C}(L(Lc\boxtimes c'),d) \ar[r] \ar[d]_-\simeq &
\Map_{L\C}(L(c\boxtimes c'),d) \ar[d]_-\simeq \\
\Map_\C(Lc\boxtimes c',d) \ar[d]_-\simeq &
\Map_\C(c\boxtimes c',d) \ar[d]_-\simeq \\
\Map_\C(Lc, \C(c',d)) \ar[d]_-\simeq &
\Map_\C(c,\C(c',d)) \ar[d]_-\simeq \\
\Map_\C(Lc,L\C(c',d)) \ar[r]^-\sim &
\Map_\C(c,L\C(c',d))
}\]
so the top horizontal map is an equivalence, and the Yoneda lemma finishes the proof.

($3\Rightarrow 4$)  Let $c,c'\in \C$. The following diagram commutes,
\[\xymatrix@C-.6cm@R-.3cm{
L(c\boxtimes c') \ar[rr] \ar[rd]_-\simeq && L(Lc\boxtimes Lc') \\ & L(Lc\boxtimes c') \ar[ru]_-\simeq
}\]
so the horizontal map is an equivalence.

($4 \Rightarrow 1$) Let us denote by $\eta$ all localization maps. Let $c\in \C$, $d\in L\C$. We will construct an inverse to $\eta:\C(c,d)\to L\C(c,d)$. First, note that if $\nu:L\C(c,d)\to \C(c,d)$ is a left inverse to $\eta$, then it is also a right inverse to it. Indeed, in this case, both $\id$ and $\eta\circ \nu$ can play the role of the dotted arrow in the following diagram, making it commute:
\[\xymatrix{
\C(c,d) \ar[r]^-\eta \ar[d]_-\eta & L\C(c,d) \\ L\C(c,d) \ar@{.>}[ru]
}\]
whence by the universal property of $\eta$ we deduce that $\eta \circ \nu$ is equivalent to $\id$.

By adjunction, constructing a left inverse $\nu$ to $\eta$ is equivalent to constructing an arrow $\overline\nu:L\C(c,d) \boxtimes c\to d$ making the following diagram commute:
\[\xymatrix{
\C(c,d)\boxtimes c \ar[r]^-e \ar[d]_-{\eta \boxtimes \id} & d \\ L\C(c,d)\boxtimes c. \ar[ru]_-{\overline \nu}
}\]
Here $e:\C(c,d)\boxtimes c \to d$, the evaluation map, is transpose to $\id:\C(c,d)\to \C(c,d)$. Consider the following commutative diagram:
\[\xymatrix{
\C(c,d)\boxtimes c \ar[dd]_-{\eta \boxtimes \id} \ar[rr]^-e \ar[rrd]^-\eta \ar[rdd]^-{\eta \boxtimes \eta} &&
d \\
&& L(\C(c,d)\boxtimes c) \ar[d]^-{L(\eta \boxtimes \eta)} \ar[u]_-u \\
L\C(c,d)\boxtimes c \ar[r]_-{\id\boxtimes \eta} &
L\C(c,d)\boxtimes Lc \ar[r]_-\eta &
L(L\C(c,d) \boxtimes Lc)
}\]
where $u$ exists by the universal property of $\eta$. By hypothesis, the vertical map $L(\eta \boxtimes \eta)$ admits an inverse $f$. Define $\overline \nu$ to be $u\circ f \circ \eta \circ (\id \boxtimes \eta)$: it is the $\overline{\nu}$ we were looking for.\\

We now prove the last assertion. Call an arrow $f:c\to c'$ in $\C$ a \emph{local equivalence} if $Lf$ is an equivalence. Note that if $f$ is a local equivalence, then $f\boxtimes \id:c\boxtimes c''\to c'\boxtimes c''$ is a local equivalence for any $c''\in \C$. Indeed, in the following commutative diagram, the vertical arrows are equivalences by (3):
\[\xymatrix@C+2pc{
L(c\boxtimes c'') \ar[r]^-{L(f\boxtimes \id)} \ar[d]_-{L(\eta \boxtimes \id)}^-\simeq &
L(c'\boxtimes c'') \ar[d]^-{L(\eta \boxtimes \id)}_-\simeq \\ L(Lc\boxtimes c'') \ar[r]_-{L(Lf\boxtimes \id)}^-\sim & L(Lc' \boxtimes c'')
}\]
Now \cite[3.4]{ggn} applies to prove that $L\C$ has the desired symmetric monoidal structure, $L$ is symmetric monoidal and $i$ is lax symmetric monoidal. 

We now prove that the symmetric monoidal structure on $L\C$ is closed with internal hom given by the internal hom of $\C$. Let $d\in L\C$. We have the following adjunctions 

\[\begin{tikzcd}[row sep=0.5in, column sep=0.5in]
  \C \arrow[r, "- \boxtimes d", shift left = 0.05in] & \C \arrow[l, "\C(d \comma -)", shift left=0.05in] \arrow[r, "L", shift left = 0.05in] & L\C.  \arrow[l, "i", shift left=0.05in]
 \end{tikzcd}\]
We now compose the adjunctions and notice that the right adjoint $\C(d,-) \circ i \simeq \C(d,-)$ takes values in the subcategory $L\C$ by (1), which gives us an adjunction 
\[
 \begin{tikzcd}[row sep=0.5in, column sep=0.5in]
  L\C \arrow[r, "L(- \boxtimes d)", shift left = 0.05in] & L\C. \arrow[l, "\C(d \comma -)", shift left=0.05in]
 \end{tikzcd}\]
Since $d$ is local and $L$ is symmetric monoidal, we get $L(- \boxtimes d) %
\simeq - \boxtimes_{L\C} d$, giving us the desired adjunction.
\end{proof}
\end{thm}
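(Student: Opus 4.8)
The plan is to prove the cycle of implications $(1)\Rightarrow(3)\Rightarrow(4)\Rightarrow(1)$ together with the equivalence $(2)\Leftrightarrow(3)$, and then to deduce the statement about the closed symmetric monoidal structure on $L\C$. Three of these — $(1)\Rightarrow(3)$, $(3)\Rightarrow(4)$ and $(2)\Leftrightarrow(3)$ — should each be a routine diagram chase at the level of mapping spaces, built from the tensor--hom adjunction $\Map_\C(a\boxtimes b,d)\simeq\Map_\C(b,\C(a,d))$, the adjunction $\Map_{L\C}(Lc',d)\simeq\Map_\C(c',d)$ valid for $d\in L\C$, and naturality of localization maps, followed by the Yoneda lemma to translate between ``$\Map_\C(c',-)$ carries $g$ to an equivalence for all $c'$'' and ``$g$ is an equivalence''. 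For $(2)\Leftrightarrow(3)$, with $c$ fixed, one compares the commuting square whose horizontal edges are $\Map_{L\C}(L(Lc\boxtimes c'),d)\to\Map_{L\C}(L(c\boxtimes c'),d)$ and $\Map_\C(c',\C(Lc,d))\to\Map_\C(c',\C(c,d))$ and whose two vertical edges are equivalences supplied by the adjunctions above; since $L(c\boxtimes c')$ and $L(Lc\boxtimes c')$ are local, the top edge is an equivalence for all $c'$ and all local $d$ exactly when $(3)$ holds for this $c$, and the bottom edge encodes $(2)$ for this $c$ in the same way. For $(3)\Rightarrow(4)$ one applies two-out-of-three to the factorization $L(c\boxtimes c')\to L(Lc\boxtimes c')\to L(Lc\boxtimes Lc')$, using $(3)$ for the first leg and $(3)$ again for the second after swapping the tensor factors (legitimate by symmetry of $\boxtimes$). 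The implication $(1)\Rightarrow(3)$ is the analogous mapping-space square, now using in addition that $\C(c',d)$ is local by $(1)$, so that one further localization map can be absorbed.

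I expect $(4)\Rightarrow(1)$ to be the only substantial step, since there one must genuinely exhibit an inverse to the localization map $\eta:\C(c,d)\to L\C(c,d)$. First, it suffices to produce a \emph{left} inverse $\nu:L\C(c,d)\to\C(c,d)$: any such $\nu$ is automatically two-sided, because $\eta\circ\nu$ and $\id$ are both endomorphisms of $L\C(c,d)$ whose precomposition with $\eta$ equals $\eta$, and the universal property of $\eta$ makes such an endomorphism unique. By the tensor--hom adjunction, giving $\nu$ amounts to giving a map $\overline\nu:L\C(c,d)\boxtimes c\to d$ which restricts, along $\eta\boxtimes\id$, to the evaluation map $e:\C(c,d)\boxtimes c\to d$ transpose to $\id_{\C(c,d)}$. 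To build $\overline\nu$: use that $d$ is local to factor $e$ as $u\circ\eta$ through $\eta:\C(c,d)\boxtimes c\to L(\C(c,d)\boxtimes c)$; use hypothesis $(4)$ to invert $L(\eta\boxtimes\eta):L(\C(c,d)\boxtimes c)\xrightarrow{\ \sim\ }L(L\C(c,d)\boxtimes Lc)$; and set $\overline\nu$ to be the composite $L\C(c,d)\boxtimes c\xrightarrow{\id\boxtimes\eta}L\C(c,d)\boxtimes Lc\xrightarrow{\eta}L(L\C(c,d)\boxtimes Lc)\xrightarrow{L(\eta\boxtimes\eta)^{-1}}L(\C(c,d)\boxtimes c)\xrightarrow{u}d$. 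The main obstacle is precisely the bookkeeping: organizing all of these localization maps into one commuting diagram to verify that $\overline\nu\circ(\eta\boxtimes\id)=e$.

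For the final assertion, once $(3)$ is available I would first check that local equivalences are stable under $-\boxtimes c''$ for every $c''$: in the square with horizontal arrows $L(f\boxtimes\id_{c''})$ and $L(Lf\boxtimes\id_{c''})$ and vertical arrows the equivalences furnished by $(3)$, invertibility of $Lf$ forces invertibility of $L(Lf\boxtimes\id_{c''})$, hence of $L(f\boxtimes\id_{c''})$. With this compatibility in place, the general theory of localizations of symmetric monoidal $\infty$-categories (\cite[3.4]{ggn}) endows $L\C$ with a symmetric monoidal structure, characterized by $-\boxtimes_{L\C}-\simeq L(-\boxtimes-)$, for which $L$ is symmetric monoidal and $i$ is lax symmetric monoidal. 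Finally, to see this structure is closed with internal hom inherited from $\C$, compose the adjunction $(-\boxtimes d)\dashv\C(d,-)$ on $\C$ with $L\dashv i$: by $(1)$ the composite right adjoint $\C(d,-)\circ i\simeq\C(d,-)$ already takes values in $L\C$, so this yields an adjunction $L(-\boxtimes d)\dashv\C(d,-)$ on $L\C$; and since $L$ is symmetric monoidal and $d$ is local, $L(-\boxtimes d)\simeq-\boxtimes_{L\C}d$, so $\C(d,-)$ is right adjoint to $-\boxtimes_{L\C}d$. Hence the internal hom of $L\C$ is the restriction of that of $\C$, as asserted.
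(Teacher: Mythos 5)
Your proposal is correct and follows essentially the same route as the paper's proof: the same mapping-space diagram chases plus Yoneda for $(2)\Leftrightarrow(3)$, $(1)\Rightarrow(3)$, $(3)\Rightarrow(4)$, the same construction of a one-sided (hence two-sided) inverse to $\eta:\C(c,d)\to L\C(c,d)$ via the adjoint map $\overline\nu=u\circ L(\eta\boxtimes\eta)^{-1}\circ\eta\circ(\id\boxtimes\eta)$ for $(4)\Rightarrow(1)$, and the same appeal to the stability of local equivalences under $-\boxtimes c''$, to \cite[3.4]{ggn}, and to the composite adjunction $L(-\boxtimes d)\dashv\C(d,-)$ for the closed symmetric monoidal structure on $L\C$.
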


We now consider a strong condition one can impose on localizations:

\begin{defn}
A localization $L:\C\to L\C$ is \emph{smashing} if $\C$ is a symmetric monoidal $\infty$-category and $L$ is of the form $ - \boxtimes I$ for some \emph{smashing object} $I$ of $\C$.
\end{defn}

Note that $I$ is equivalent to the localization of the monoidal unit of $\C$, and that for any $c,c'\in \C$, we have
\begin{equation}\label{eq-boxy}
c\boxtimes Lc'\simeq L(c\boxtimes c') \simeq Lc\boxtimes c'.
\end{equation}

\begin{prop} \label{prop-smashing-day} If $\C\to L\C$ is a smashing localization functor of a closed symmetric monoidal $\infty$-category, then the equivalent conditions of \cref{prop-dayreflection} are satisfied. Moreover, the monoidal product $\boxtimes_{L\C}$ of $L\C$ can be computed in $\C$, i.e. if $i$ denotes the right adjoint to $L$, then
\[i(d\boxtimes_{L\C} d')\simeq id\boxtimes id'.\]
\end{prop}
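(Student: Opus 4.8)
The plan is to treat the two assertions separately. For the first, it is enough to verify any one of the four equivalent conditions of \cref{prop-dayreflection}; I would check condition (3): that for all $c,c'\in\C$ the localization map $c\to Lc$ induces an equivalence $L(c\boxtimes c')\to L(Lc\boxtimes c')$. The two inputs are the relation \eqref{eq-boxy} and the fact that, for a smashing localization, the localization map $\eta_c\colon c\to Lc$ is identified with $\id_c\boxtimes\eta$, where $\eta\colon\1\to I$ is the localization map of the monoidal unit. I would establish this last fact first: the morphism $\id_c\boxtimes\eta\colon c\simeq c\boxtimes\1\to c\boxtimes I=Lc$ has $L$-local target, and applying $L=-\boxtimes I$ sends it to $\id_c\boxtimes L\eta$, which is an equivalence since $L\eta$ is the invertible unit map of the idempotent monad $L$; hence $\id_c\boxtimes\eta$ is a local equivalence with local target, so it agrees with $\eta_c$ by the universal property of localization.

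With that in hand, condition (3) is immediate: using the symmetry of $\boxtimes$ we have $\eta_c\boxtimes\id_{c'}\simeq(\id_c\boxtimes\eta)\boxtimes\id_{c'}\simeq\id_{c\boxtimes c'}\boxtimes\eta=\eta_{c\boxtimes c'}$, so $L(\eta_c\boxtimes\id_{c'})$ is $L$ applied to a localization map and is therefore an equivalence. (One could equally well verify condition (4) by the same computation, noting that $I\boxtimes I\simeq I$ since $I=L\1$ is local.) \cref{prop-dayreflection} then supplies the closed symmetric monoidal structure on $L\C$ with $L$ symmetric monoidal and $i$ lax symmetric monoidal and closed.

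For the remaining claim, let $d,d'\in L\C$. Since $L$ is symmetric monoidal and $Li\simeq\id_{L\C}$ (because $i$ is the fully faithful right adjoint of the localization), we get $d\boxtimes_{L\C}d'\simeq Lid\boxtimes_{L\C}Lid'\simeq L(id\boxtimes id')$ in $L\C$, and applying $i$ yields $i(d\boxtimes_{L\C}d')\simeq iL(id\boxtimes id')$. So it only remains to check that $id\boxtimes id'$ is already $L$-local, for then the localization map $id\boxtimes id'\to iL(id\boxtimes id')$ is an equivalence and the proof is complete. But $id'$ is $L$-local, so $L(id')\simeq id'$, and then \eqref{eq-boxy} gives $id\boxtimes id'\simeq id\boxtimes L(id')\simeq L(id\boxtimes id')$, exhibiting $id\boxtimes id'$ in the essential image of the localization endofunctor, hence local.

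The only place I expect any friction is the first step: realizing the equivalence $\eta_c\simeq\id_c\boxtimes\eta$ as an equivalence of \emph{morphisms}, and likewise the symmetry identification $\eta_c\boxtimes\id_{c'}\simeq\eta_{c\boxtimes c'}$, at the level of $\infty$-categories rather than $1$-categories. This is pure coherence bookkeeping; once it is done, everything else follows formally from the universal property of localizations and from \cref{prop-dayreflection}.
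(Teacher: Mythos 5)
Your proposal is correct and follows essentially the same route as the paper, which simply declares condition (3) of \cref{prop-dayreflection} ``immediate'' from the smashing hypothesis and deduces the formula for $\boxtimes_{L\C}$ from (\ref{eq-boxy}) together with the monoidality of $L$. You have merely spelled out the coherence details (identifying $\eta_c$ with $\id_c\boxtimes\eta$ and checking that $id\boxtimes id'$ is local) that the paper leaves implicit.
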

Note that we are not saying that $i$ is symmetric monoidal: indeed, $i$ typically does not preserve the monoidal unit.
\begin{proof}
In this situation, condition (3) of \cref{prop-dayreflection} is immediate, and the statement about $\boxtimes_{L\C}$ readily follows from (\ref{eq-boxy}).
\end{proof}

\subsection{Smashing localizations of $\prl$}

We now look at smashing localizations $L$ of $\prl$. From \cref{prop-smashing-day} , we get that if $A$ is a commutative algebra in $\prl$, then we have a commutative algebra $LA$ in $\prl$. Recall that $\infty$-categories of modules over a commutative algebra can be endowed with a closed symmetric monoidal structure \cite[4.5.2]{ha}.

\begin{thm}\cite[4.8.2.10]{ha}, \cite[3.8, 3.9]{ggn} \label{thm-ggn-main} Let $L:\prl\to\prl$ be a smashing localization.
\begin{enumerate}[(i)]
\item The underlying presentable $\infty$-category of an object in $\Mod_{L\S}(\prl)$ is $L$-local, and the forgetful functor $\Mod_{L\S}(\prl)\to L\prl$ is an equivalence of symmetric monoidal $\infty$-categories, where $L\prl$ is a symmetric monoidal $\infty$-category as in \cref{prop-dayreflection}. %
\item For any presentable closed symmetric monoidal $\infty$-category $\C$, the $\infty$-category $L\C$ admits a unique closed symmetric monoidal structure such that the localization map $\C\to L\C$ gets a symmetric monoidal structure.
\item Given a second smashing localization $L':\prl\to\prl$ such that $L'\prl\subset L\prl$, the induced morphism $\eta_\C:L\C\to L'\C$ admits a unique symmetric monoidal structure.
\end{enumerate}
\end{thm}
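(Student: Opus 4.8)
The plan is to deduce all three statements from Lurie's theory of idempotent objects \cite[4.8.2]{ha} together with the results of this subsection. The starting observation is that a smashing localization $L$ of $\prl$ is precisely tensoring with an idempotent object $I \coloneqq L\S$, the localization of the monoidal unit $\S$: one has $L \simeq -\otimes I$, and $I$ carries a canonical (idempotent) commutative algebra structure in $\prl$, with unit map $\S\to I$ the localization map.

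For (i), I would invoke \cite[4.8.2.10]{ha}: for an idempotent algebra $I$ in a presentably symmetric monoidal $\infty$-category it identifies $\Mod_I$ with the full subcategory of objects local with respect to $-\otimes I$, compatibly with the symmetric monoidal structures. Applying this with $\prl$ as the ambient category gives the equivalence $\Mod_{L\S}(\prl)\xrightarrow{\ \sim\ } L\prl$; the target is symmetric monoidal by \cref{prop-dayreflection} (whose hypotheses hold since $L$ is smashing, by \cref{prop-smashing-day}), and monoidality of the equivalence is part of the same citation. The first clause is then immediate: for an $L\S$-module $M$ one has $M\simeq M\otimes_{L\S} L\S \simeq M\otimes I$, which lies in the essential image of $L$, hence is $L$-local.

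For (ii) and (iii), I would argue similarly using the identification $L\C \simeq \C\otimes I$. In (ii): since both $\C$ and $I$ are commutative algebra objects of $\prl$, so is $\C\otimes I$, i.e.\ it is a presentable closed symmetric monoidal $\infty$-category, and the localization map $\C\simeq \C\otimes\S\to\C\otimes I\simeq L\C$ is a map of commutative algebras in $\prl$, hence symmetric monoidal; uniqueness holds because $\C\to L\C$ is a localization at a class of equivalences stable under $-\boxtimes c''$ (as in the proof of \cref{prop-dayreflection}), so \cite[3.8, 3.9]{ggn} applies. In (iii): the hypothesis $L'\prl\subseteq L\prl$ says exactly that $I'\coloneqq L'\S$ is $L$-local, so the $L'$-localization map $\S\to I'$ factors uniquely through $\S\to I$, producing a canonical map $I\to I'$ in $\prl$ which is a map of commutative algebras (maps of idempotent algebras under $\S$ being essentially unique, \cite[4.8.2]{ha}); tensoring with $\C$ gives a map of commutative algebras in $\prl$
\[\eta_\C: L\C\simeq \C\otimes I\longrightarrow \C\otimes I'\simeq L'\C,\]
which is thereby canonically symmetric monoidal, with uniqueness as in (ii).

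The step requiring the most care is tracking the symmetric monoidal compatibilities rather than the bare equivalences of $\infty$-categories: that the equivalence in (i) is monoidal, and that the structure maps in (ii) and (iii) are produced as maps of algebra objects in $\prl$ and not merely as functors. All of this is packaged in the cited results; the only genuine conceptual input is the identification of smashing localizations of $\prl$ with tensoring by idempotent commutative algebras.
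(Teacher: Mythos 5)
Your proposal is correct and takes essentially the same route as the paper, which does not reprove this statement but cites it directly from \cite[4.8.2.10]{ha} and \cite[3.8, 3.9]{ggn}, exactly the idempotent-algebra and monoidal-localization results you assemble. The only point where the paper adds detail is the construction of $\eta_\C$ in (iii) as an adjunction transpose of the unit $\C\to L'\C$, which agrees with your description of it as $\C$ tensored with the essentially unique algebra map $L\S\to L'\S$.
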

In (iii), the morphism $L\C\to L'\C$ is obtained as follows. Let $j:L'\prl\to L\prl$ denote the inclusion. The unit of the $(L',i')$ adjunction gives a map $\C\to i'L'\C=ijL'\C$ whose transpose under the $(L,i)$ adjunction is the desired map \begin{equation}\label
{eq-eta}\eta_\C:L\C\to jL'\C=L'\C.\end{equation}

\begin{rmk} \label{rmk-functor-tensor}
 Let $\C$ and $\D$ be closed symmetric monoidal $\infty$-categories. Note that if $F:\C\to \D$ is a symmetric monoidal functor which is an equivalence of $\infty$-categories, then it preserves the internal hom, i.e. if $c,c'\in \C$ then $F$ gives an equivalence $F\C(c,c') \stackrel{\sim}{\to} \D(Fc,Fc')$. Indeed, let $c,c'\in\C$ and $d\in\D$. Then $d\simeq Fz$ for some $z\in\C$, and
 \begin{align*}
 	\Map_{\D}(d,&\D(Fc,Fc')) 
 						   \simeq \Map_{\D}(Fz\boxtimes_{\D}Fc,Fc') \simeq
 						    \Map_{\D}(F(z\boxtimes_{\C}c),Fc') \simeq \\
 						   &\simeq \Map_{\C}(z\boxtimes_{\C}c,c')  
 						   \simeq \Map_{\C}(z,\C(c,c'))
 						   \simeq \Map_{\D}(d,F\C(c,c'))
 \end{align*}
 as desired.
Since the internal hom in $L\prl$ is given by $\FunL$ (\cref{prop-dayreflection}) %
, from \cref{thm-ggn-main}(i) we deduce that for $\C,\D\in \Mod_{L\S}(\prl)$, there is an equivalence of $\infty$-categories
\[\Mod_{L\S}(\prl)(\C,\D)\stackrel{\sim}{\to} \FunL(\C,\D).\]
Thus, any colimit-preserving functor $\C\to \D$ can naturally be given the structure of an $L\S$-module map.
\end{rmk}

If $f:A\to B$ is a morphism of commutative algebras in a presentable closed symmetric monoidal $\infty$-category $\C$, then there is a restriction of scalars functor $\res_f:\Mod_B(\C)\to \Mod_A(\C)$ with left adjoint given by the extension of scalars functor $B\boxtimes_A-:\Mod_A(\C)\to \Mod_B(\C)$ \cite[4.5.3]{ha}.

\begin{prop} \label{prop-res} Let $L,L':\prl\to \prl$ be two smashing localizations such that $L'\prl\subset L\prl$. The following diagrams commute:
\[\xymatrix{\Mod_{L'\S}(\prl) \ar[r]^-\sim & L'\prl \\ \Mod_{L\S}(\prl) \ar[r]_-\sim \ar[u]^-{L'\S\otimes-} & L\prl \ar[u]_-{L'} } \hspace{1cm}
\xymatrix{\Mod_{L'\S}(\prl) \ar[r]^-\sim \ar[d]_-{\res_f} & L'\prl \ar@{^(->}[d] \\ \Mod_{L\S}(\prl) \ar[r]_-\sim & L\prl. }
\]
In particular, if $\C\in L'\prl$, then $\C$ is tensored over $L\S$: the tensor of $c\in \C$ with $A\in L\S$ is given by \begin{equation}\label{eq-tensorextend} \eta_\S(A)\otimes_{\C/L'\S}c,\end{equation} where $\eta_\S:L\S\to L'\S$ was described in (\ref{eq-eta}) and $\otimes_{\C/L'\S}$ denotes the tensor of $\C$ over $L'\S$.
\begin{proof} 
 The vertical maps in the two squares are adjoints (where the ones on the left square are the left adjoints) and thus it suffices to prove the square on the left commutes.
 This follows immediately from the fact that $L'$ is smashing and thus $L' \simeq L'\S \otimes -$.
\end{proof}
\end{prop}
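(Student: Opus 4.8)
The plan is to follow the two-step route the statement suggests: first reduce, by an adjointness argument, to the commutativity of the left-hand square, and then read that off from the fact that $L'$ is smashing.

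For the reduction, the point is that the two squares are obtained from one another by replacing every arrow with its adjoint. The horizontal arrows are the symmetric monoidal equivalences of \cref{thm-ggn-main}(i), in particular adjoint equivalences; the left-hand vertical arrow $L'\S\otimes-$ of the first square has right adjoint $\res_f$, the restriction of scalars along $\eta_\S\colon L\S\to L'\S$ of \eqref{eq-eta} (\cite[4.5.3]{ha}); and the vertical arrow $L'$ of the first square is a localization, hence left adjoint to the inclusion $L'\prl\hookrightarrow L\prl$. Writing $u,u'$ for the two horizontal equivalences, an equivalence $u'\circ(L'\S\otimes-)\simeq L'\circ u$ filling the first square can be rewritten as $L'\S\otimes-\simeq (u')^{-1}\circ L'\circ u$; passing to right adjoints (and using that the right adjoint of a composite is the composite of the right adjoints) gives $\res_f\simeq u^{-1}\circ\,\text{incl}\,\circ u'$, which says that the second square commutes. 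So it suffices to prove that the first square commutes.

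To do that I would identify the two composites $\Mod_{L\S}(\prl)\to L'\prl$ around it. Going right and then down, an object $\C$ is sent to $L'$ of its underlying presentable $\infty$-category, which is $\C\otimes L'\S$ (tensor in $\prl$) because $L'$ is smashing with smashing object $L'\S$. Going down and then right, $\C$ is sent to the underlying presentable $\infty$-category of the module $L'\S\otimes_{L\S}\C$; here I would invoke that the equivalence $\Mod_{L\S}(\prl)\xrightarrow{\sim}L\prl$ of \cref{thm-ggn-main}(i) is symmetric monoidal and that, by \cref{prop-smashing-day} (equivalently by \eqref{eq-boxy}), the monoidal product of $L\prl$ is computed by the $\prl$-tensor of underlying $\infty$-categories, so that this underlying $\infty$-category is again $L'\S\otimes\C\simeq\C\otimes L'\S$ — manifestly $L'$-local, hence its own image under the forgetful functor to $L'\prl$. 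Since everything in sight is functorial in $\C$, the resulting equivalence is natural, which completes the reduction.

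Finally, the ``in particular'' clause drops out: an object $\C\in L'\prl$ is an $L'\S$-module via the lower equivalence, hence an $L\S$-module via $\res_f$ and the upper equivalence, i.e.\ tensored over $L\S$; unwinding the definition of restriction of scalars along $\eta_\S$ shows that the resulting tensor of $c\in\C$ with $A\in L\S$ is $\eta_\S(A)\otimes_{\C/L'\S}c$, which is \eqref{eq-tensorextend}. The only step that is not purely formal is the identification of $L'\S\otimes_{L\S}-$ (on module $\infty$-categories) with $-\otimes L'\S$ (on $L\prl\subseteq\prl$) after applying the forgetful equivalences; this is exactly where one needs the monoidality of \cref{thm-ggn-main}(i) together with the smashing description of the monoidal product of $L\prl$, and I expect it to be the only place demanding genuine care.
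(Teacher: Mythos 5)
Your proposal is correct and follows essentially the same route as the paper: reduce to the left square by passing to right adjoints of all four functors, then identify both composites using that $L'$ is smashing, so $L'\simeq L'\S\otimes-$. The only difference is one of detail — the paper compresses into that one line the identification of the extension-of-scalars composite (via the monoidality of \cref{thm-ggn-main}(i) and the fact that the product of $L\prl$ is computed in $\prl$), which you correctly spell out.
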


\begin{rmk} \label{rmk-dependence} From (\ref{eq-tensorextend}) we deduce that the tensor with $A\in L\S$ depends only on $\eta_\S(A)$, in the sense that if $A, A'\in L\S$ are such that $\eta_\S(A)\simeq \eta_\S(A')$, then the tensors with $A$ or with $A'$ are equivalent.
\end{rmk}

There is an analogous result for enrichments under the same hypotheses as the above proposition. First note that since the localization map $\eta:L\S\to L'\S$ is a symmetric monoidal functor, then its right adjoint $\bar{\eta}: L'\S \to L\S$ is lax monoidal \cite[A.5.11]{gepner-haugseng}, so it gives us a functor
 $(\prl)^{\bar{\eta}}:  (\prl)^{L'\S} \to (\prl)^{L\S}$ between $\infty$-categories of presentable enriched $\infty$-categories \cite[5.7.8]{gepner-haugseng}.

\begin{prop} \label{prop-enrichments}
In the situation of the previous proposition, the following diagram commutes:
 \begin{center}
  \begin{tikzcd}[row sep=0.5in, column sep=0.5in]
   \Mod_{L'\S}(\prl) \arrow[r, hookrightarrow] \arrow[d, "\res"] & (\prl)^{L'\S} \arrow[d, "(\prl)^{\bar{\eta}}"] \\ 
   \Mod_{L\S}(\prl) \arrow[r, hookrightarrow] & (\prl)^{L\S}.
  \end{tikzcd}
 \end{center}
In particular, if $\C\in L'\prl$, then $\C$ is enriched over $L\S$: the enriched hom $(L\S)(c,c')$ where $c,c'\in \C$ is given by $\bar{\eta}((L'\S)(c,c'))$.
\end{prop}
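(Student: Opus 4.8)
The conceptual content of the statement is that restriction of scalars along the symmetric monoidal left adjoint $\eta_\S\colon L\S\to L'\S$ should correspond, under the embeddings $\Mod_\R(\prl)\hookrightarrow(\prl)^\R$ of \cref{rmk-cotensor-enrichment}, to change of enrichment along its lax monoidal right adjoint $\bar\eta$. The plan is to make this precise by unwinding both composites in the square on a module $\C\in\Mod_{L'\S}(\prl)$. I would first recall from \cite[7.4.13]{gepner-haugseng} that for a presentable symmetric monoidal $\R$ and $\C\in\Mod_\R(\prl)$, the enrichment is characterized by the natural equivalence $\Map_\R(v,\R(c,c'))\simeq\Map_\C(v\otimes c,c')$, and that the composition and unit maps of the resulting $\R$-enriched structure are induced from the action by adjunction.

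Next I would invoke \cref{prop-res}, which says that the $L\S$-module structure on $\res\,\C$ has tensor $A\otimes c\simeq\eta_\S(A)\otimes_{\C/L'\S}c$. Combining this with the characterization above and the adjunction $\eta_\S\dashv\bar\eta$ gives, for all $A\in L\S$ and $c,c'\in\C$,
\begin{align*}
\Map_{L\S}\bigl(A,(L\S)(c,c')\bigr) &\simeq \Map_\C(\eta_\S(A)\otimes_{\C/L'\S}c,c') \simeq \Map_{L'\S}\bigl(\eta_\S(A),(L'\S)(c,c')\bigr)\\
&\simeq \Map_{L\S}\bigl(A,\bar\eta((L'\S)(c,c'))\bigr),
\end{align*}
so the $L\S$-enriched hom of $\res\,\C$ is $\bar\eta((L'\S)(c,c'))$ --- precisely the hom-object of the $L\S$-enriched category $(\prl)^{\bar\eta}(\C)$. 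For the remaining data I would check that the composition of $\res\,\C$ is $\bar\eta$ applied to the composition of $\C$'s $L'\S$-enrichment, precomposed with the lax structure map $\bar\eta((L'\S)(c,c'))\boxtimes\bar\eta((L'\S)(c'',c))\to\bar\eta\bigl((L'\S)(c,c')\boxtimes(L'\S)(c'',c)\bigr)$ of $\bar\eta$; this is by definition the composition in $(\prl)^{\bar\eta}(\C)$ \cite[5.7.8]{gepner-haugseng}, and units are handled the same way. The ``in particular'' clause is then the object-level identity just displayed.

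The hard part will be upgrading these objectwise identifications to an equivalence of functors $\Mod_{L'\S}(\prl)\to(\prl)^{L\S}$, since comparing the composition and higher coherence data within the Gepner--Haugseng formalism for enriched $\infty$-categories and change of enrichment requires care. A cleaner route, if available, is to invoke the essential uniqueness of the $\R$-enrichment underlying a tensored $\R$-module: granting this, it would suffice to produce the natural transformation and verify that it is an equivalence on objects and on hom-objects, which is exactly what the displayed computation does.
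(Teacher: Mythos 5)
Your proposal is correct and takes essentially the same route as the paper: the paper's proof consists of observing that the square $\FunL(\C\times L'\S,\C)\simeq \FunR(\C^\op\times\C,L'\S)$ over $\FunL(\C\times L\S,\C)\simeq \FunR(\C^\op\times\C,L\S)$, with vertical maps $(\id\times\eta)^*$ and $\bar\eta_*$, commutes, which is precisely your computation $\Map_{L\S}(A,(L\S)(c,c'))\simeq\Map_\C(\eta_\S(A)\otimes c,c')\simeq\Map_{L\S}(A,\bar\eta((L'\S)(c,c')))$ packaged at the level of functor $\infty$-categories so that naturality in $c,c'$ comes for free. The coherence upgrade you flag as the hard part is treated only implicitly in the paper (by identifying the action/hom functors rather than arguing objectwise), so your write-up is, if anything, more explicit about that step.
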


\begin{proof}
 Let $\C$ be an $L'\S$-module. Then the desired result follows from the fact that the following diagram commutes:
 \begin{center}
  \begin{tikzcd}[row sep=0.5in, column sep=0.5in]
   \FunL(\C \times L'\S,\C) \arrow[r, "\sim"] \arrow[d, "(\id \times \eta)^*"] & \FunR(\C^\op \times \C, L'\S) \arrow[d, "\bar{\eta}_*"] \\
   \FunL(\C \times L\S,\C) \arrow[r, "\sim"] & \FunR(\C^\op \times \C,L\S).
  \end{tikzcd}
 \end{center}
  Indeed, the enrichment map $\C^\op \times \C \to L\S$ is equivalent to the composition \[\xymatrix{\C^\op \times \C \ar[rr]^-{L'\S(-,-)} && L'\S \ar[r]^-{\bar\eta} & L\S.}\qedhere\]
\end{proof}

\begin{defn} \label{def-freegen}
 Let $\P$ be a full subcategory of $\prl$, $\C$ be a presentable $\infty$-category in $\P$ and $c$ be an object of $\C$. Then we say $\C$ is {\it freely generated in $\P$ by $c$} 
 if for any presentable $\infty$-category $\D$ in $\P$ the following functor induced by $\{c\}:*\to \C$ is an equivalence:
 \[\xymatrix{\ev_c: \FunL(\C,\D) \ar[r]^-{\{c\}^*} & \Fun(*,\D) \simeq \D.}\]
\end{defn}

The quintessential example is given by $\S$, which is freely generated in $\prl$ by any contractible space.

\begin{prop}\label{prop-freely-generated} Let $L$ be a smashing localization of $\prl$. Then $L\S$ is freely generated in $L\prl$ by the monoidal unit of $L\S$. %
\begin{proof} Let $\1$ denote the unit of $L\S$ and let $\D\in L\prl$. We can give two proofs. The first one follows from %
\cref{prop-dayreflection}(2), using the localization map $\S\to L\S$: \[\ev_\1:\FunL(L\S,\D)\stackrel{\sim}{\to} \FunL(\S,\D)\stackrel{\sim}{\to} \Fun(*,\D) \simeq \D.\]
For the second proof, first note that by \cref{rmk-functor-tensor}, $\FunL(L\S,\D)\simeq\Mod_{L\S}(\prl)(L\S,\D)$. In this way, $\ev_\1$ is equivalently given by a functor $\Mod_{L\S}(\prl)(L\S,\D) \to \D$, which is the canonical equivalence obtained from the fact that $L\S$ is the monoidal unit of $\Mod_{L\S}(\prl)$.
\end{proof}
\end{prop}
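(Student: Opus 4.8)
The plan is to deduce the statement from the fact — recalled just before the proposition — that $\S$ is freely generated in $\prl$ by any contractible space, transporting that universal property along the localization map $v:\S\to L\S$. Unwinding \cref{def-freegen}, what must be shown is that for every $\D\in L\prl$ the functor $\ev_{\1}:\FunL(L\S,\D)\to\D$, evaluation at the monoidal unit $\1$ of $L\S$, is an equivalence of $\infty$-categories.

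First I would set up the comparison. Restriction along $v$ gives a functor $v^*:\FunL(L\S,\D)\to\FunL(\S,\D)$. Applying \cref{prop-dayreflection}(2) to the closed symmetric monoidal $\infty$-category $\C=\prl$, whose internal hom is $\FunL$ and whose localization $L$ satisfies the equivalent conditions of that theorem by \cref{prop-smashing-day}: since $\D$ is $L$-local and $L\S=L(\S)$, the map $v^*$ is an equivalence. On the other hand, $\ev_*:\FunL(\S,\D)\to\D$ is an equivalence because $\S$ is freely generated in $\prl$ by a point, and this holds for all of $\prl$, in particular for $\D\in L\prl$. Composing, $\ev_*\circ v^*$ is an equivalence $\FunL(L\S,\D)\xrightarrow{\ \sim\ }\D$.

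It remains to identify this composite with $\ev_{\1}$, and this is the only step that requires attention, since for "freely generated" one needs the specific functor $\ev_{\1}$ to be an equivalence, not merely the existence of some equivalence $\FunL(L\S,\D)\simeq\D$. Here I would use that $v$ is symmetric monoidal (\cref{thm-ggn-main}(ii)), hence carries the unit $*\in\S$ to the unit $\1\in L\S$; therefore $(\ev_*\circ v^*)(F)=F(v(*))\simeq F(\1)$, naturally in $F$, so the composite equivalence is (naturally equivalent to) $\ev_{\1}$. This is immediate once one knows $v(*)\simeq\1$, so there is no real obstacle — the argument is a transport of structure, whose only bookkeeping cost is tracking the basepoint.

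Finally, I would record a second, more structural argument. By \cref{rmk-functor-tensor}, for $\D\in\Mod_{L\S}(\prl)\simeq L\prl$ one has $\FunL(L\S,\D)\simeq\Mod_{L\S}(\prl)(L\S,\D)$, and since $L\S$ is the monoidal unit of $\Mod_{L\S}(\prl)$ this mapping $\infty$-category is canonically $\D$. As before, the small thing to verify is that under these identifications the resulting functor to $\D$ is evaluation at $\1$; this holds because the canonical equivalence "modules over the monoidal unit $\simeq$ the ambient $\infty$-category" is implemented precisely by evaluating a module morphism at the unit object. I would present the first argument as the main proof and include this as a remark-style alternative.
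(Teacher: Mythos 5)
Your proposal is correct and follows essentially the same route as the paper: the main argument is the paper's first proof (restriction along the localization map $v:\S\to L\S$, using \cref{prop-dayreflection}(2) together with the free generation of $\S$ by the point), and your alternative is exactly the paper's second proof via $\Mod_{L\S}(\prl)$ and the monoidal unit. Your extra care in identifying the composite equivalence with $\ev_{\1}$ via monoidality of $v$ is a point the paper leaves implicit, but it does not change the argument.
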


The previous proposition might suggest that $L\S$ behaves similarly to $\S$ itself in certain ways. We will now observe that from two elementary points of view this is not the case,  even in a simple example. We will use that there is a smashing localization $L$ of $\prl$ such that $L\prl$ is given by pointed presentable $\infty$-categories, and $L\S$ is given by $\S_*$. We will explain this in more detail in \cref{subsection-fourloc}.

\begin{rmk} \label{ex-eternal} Left Kan extension along the Yoneda embedding $j:S\to \P(S)$ for a simplicial set $S$ induces an adjunction $\adj{\Fun(S,\D)}{\FunL(\P(S),\D)}{\Lan_j}{j^*}$ \cite[5.1.5]{htt}; this is true in particular for $S=*$.  A general left Kan extension along an arbitrary functor, however, may not preserve colimits, so the inverse to $\ev_c$ in \cref{def-freegen} need not be equivalent to $\Lan_c$.

To see this, consider the example outlined above where $L\S$ is $\S_*$. If the inverse to $\{S^0\}^*$ were $\Lan_{\{S^0\}}$, we would have that for any $X\in \S_*$,  $\Lan_{\{S^0\}}\{X\}(S^0)\simeq X$. However, the colimit formula for $\Lan$ gives us that $\Lan_{\{S^0\}}\{X\}(S^0)\simeq X\vee X$.
\end{rmk}

\begin{rmk}
Note that %
\cref{prop-freely-generated} is not equivalent to saying that every object in $L\S$ is a 
colimit of a constant diagram with value the monoidal unit. 
One might have suspected this because of what happens in the case of $\S$: there, every object $X$ is a colimit of a constant diagram with value the one-point space. Indeed, 
 \[X \simeq X \otimes * = \colim (X \xrightarrow{ \ \{*\} \ } \S).\]
We will now observe that already in the simple case outlined above where $L\S$ is $\S_*$, the analogous statement fails, i.e. we will show that not every object in $\S_*$ is the colimit of a constant diagram with value $S^0$. 

Let $K$ be a simplicial set. Then we have 
 $$\colim( K \xrightarrow{ \ \{S^0\} \ } \S_*) \simeq \colim( K \to * \xrightarrow{ \ \{*\} \ } \S  \xrightarrow{ \ (-)_+ \ } \S_*) \simeq \colim( K \to * \xrightarrow{ \ \{*\} \ } \S  )_+ \simeq (K^\Kan)_+,$$
 where $K^\Kan$ is the Kan fibrant replacement of $K$ of \cref{rmk-kanfibrant}. Notice that we used that the functor $(-)_+$ is a left adjoint and thus commutes with colimits. 
 Therefore, the colimit of any constant diagram with value $S^0$ necessarily has a disjoint base point, which clearly does not hold for every pointed space.

Note that, at a first glance, one could have thought that for a given pointed space $(Y,y)$ one could use the cofiber sequence 
 $$\{*\}_+ \xrightarrow{ \ \{y\}_+ \ } Y_+ \to Y$$
 to construct $Y$ as the colimit of a constant diagram with value $S^0$, given that $Y_+$ and $\{*\}_+ \simeq S^0$ can both be constructed as such colimits. However, that cofiber sequence is a pushout along the map $S^0 \to *$, so in order to use this argument, 
 we must first be able to construct this map as the colimit of a map of diagrams. More precisely, if $\colim(I \xrightarrow{\{S^0\}} \S_*) \simeq S^0$ and $\colim (J \xrightarrow{\{S^0\}} \S_*) \simeq *$, then we would need to express the map $S^0\to *$ as the colimit of a map of diagrams $I\to J$. %
 However, $ \colim (J \xrightarrow{\{S^0\}} \S_*) \simeq *$ if and only if $J = \emptyset$ and so there cannot be any functor $I \to J$.
\end{rmk}

\cref{prop-freely-generated} can be generalized. %
If $K$ is a small simplicial set, then the $\infty$-category of space-valued presheaves $\P(K)\coloneqq \Fun(K^\op,\S)$ is the free cocomplete $\infty$-category on $K$, in the sense that composition with the Yoneda embedding $j:K\to \P(K)$ induces an equivalence of $\infty$-categories
\begin{equation}\label{eq-freecoco}
\FunL(\P(K),\C) \simeq \Fun(K,\C)
\end{equation}
for any cocomplete $\infty$-category $\C$ \cite[5.1.5.6]{htt}. Since $\P(K)$ is presentable \cite[5.5.3.6]{htt}, then $\P(K)$ can also be regarded as the free presentable $\infty$-category on $K$.

Let $L$ be a smashing localization of $\prl$. Define
\[\P_{L\S}(K)\coloneqq \Fun(K^\op,L\S).\]
We will now prove that $\P_{L\S}(K)$ is the free object in $L\prl$ (equivalently, the free $L\S$-module) on $K$. First, we prove that $L\P(K)\simeq \P_{L\S}(K)$.

\begin{lemma} \label{lemma-lpk} Let $K$ be a small simplicial set. Let $L$ be a smashing localization of $\prl$. There is an equivalence of $\infty$-categories
\begin{equation}\label{eq-lpk} L\Fun(K,\S) \simeq \Fun(K,L\S).\end{equation}
Moreover, this equivalence makes the following triangle commute:
\[\xymatrix{\Fun(K,\S) \ar[r]^-u \ar[rd]_-{v_*} & L\Fun(K,\S) \ar@{-}[d]^-\sim \\ & \Fun(K,L\S)}\]
where $u:\Fun(K,\S)\to L\Fun(K,\S)$ and $v:\S\to L\S$ are the localization maps.
\begin{proof} Recall that if $\C,\D\in \prl$, then $\C\otimes \D \simeq \FunR(\C^\op,\D)$. Using the equivalence of $\infty$-categories $\FunL(\C,\D)^\op\simeq \FunR(\D,\C)$ \cite[5.2.6.2]{htt} and (\ref{eq-freecoco}), we deduce:
\begin{align}
\label{eq-lpkeq}\Fun(K,L\S) &\simeq \Fun(K^\op,(L\S)^\op)^\op \simeq \FunL(\Fun(K,\S),(L\S)^\op)^\op\simeq  \\
& \simeq \FunR((L\S)^\op,\Fun(K,\S)) \simeq L\S \otimes \Fun(K,\S) \simeq L\Fun(K,\S). \nonumber
\end{align}
We will now prove that the triangle commutes.

To see this, first note that the equivalences in (\ref{eq-lpkeq}) are natural in the localization $L$. To illustrate, for a natural transformation $w:L'\Rightarrow L$ of localizations, 
the naturality of the last equivalence takes the form
\[\xymatrix{
L'\Fun(K,\S) \ar[r]^-{w_{\Fun(K,\S)}} \ar[d]_-\sim & L\Fun(K,\S) \ar[d]^-\sim \\ L'\S \otimes \Fun(K,\S) \ar[r]_-{w_\S \otimes \id} & L\S \otimes \Fun(K,\S).
}\]
Taking $L'$ to be the identity and considering the localization natural transformation $\id\Rightarrow L$, the naturality of (\ref{eq-lpkeq}) takes the form of the following commutative square:
\begin{equation} \label{eq-squarefun}\xymatrix{
\Fun(K,\S) \ar[r]^-u \ar[d]_-\sim& L\Fun(K,\S) \ar[d]^-\sim \\ \Fun(K,\S) \ar[r]_{v_*} & \Fun(K,L\S).
}\end{equation}
Therefore, to prove that the triangle commutes it suffices to see that the equivalence on the left of (\ref{eq-squarefun}) is the identity.

Let $F: K \to \S$. The first equivalence of (\ref{eq-lpkeq}) is actually an isomorphism of $\infty$-categories that gives us $F^\op: K^\op \to \S^\op$. The next equivalence
extends $F^\op$ to a functor $\widehat{F^\op}: \Fun(K,\S) \to \S^\op$. In the next step, we associate to $\widehat{F^\op}$ a right adjoint functor $G: \S^\op \to \Fun(K,\S)$. In the last step we simply evaluate to get 
$G(*): K \to \S$. We need to prove that $G(*) \simeq F: K \to \S$. Let $k\in K$, and let $j$ be the Yoneda embedding of $K^\op$. The chain of equivalences
\begin{align*}
G(*)(k) &\simeq \Map_{\Fun(K,\S)}(j(k), G(*)) \simeq \Map_{\S^\op}(\widehat{F^\op}(j(k)), *)  \simeq  \\
&\simeq \Map_{\S^\op}(F^\op(k), *) \simeq \Map_{\S}(*,F^\op(k)) \simeq F^\op(k) \simeq F(k)
\end{align*}
gives us the desired result.
\end{proof}
\end{lemma}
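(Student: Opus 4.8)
The plan is to realise the equivalence as a composite of canonical equivalences coming from the calculus of presheaf categories in $\prl$, and then to identify it with $v_*$ by a naturality argument in $L$. For the equivalence itself: since $L$ is smashing we have $L\Fun(K,\S)\simeq L\S\otimes\Fun(K,\S)$, so it suffices to rewrite the right-hand side using only formal features of $\prl$: the identification $\C\otimes\D\simeq\FunR(\C^\op,\D)$, the op-duality $\FunL(\C,\D)^\op\simeq\FunR(\D,\C)$ of \cite[5.2.6.2]{htt}, the universal property of the free cocompletion (\ref{eq-freecoco}) — valid for any cocomplete $\infty$-category, in particular for $(L\S)^\op$, which is cocomplete because $L\S$ is presentable hence complete — and the elementary $\Fun(A,B)^\op\simeq\Fun(A^\op,B^\op)$. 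Chained together these give
\begin{align*}
\Fun(K,L\S) &\simeq \Fun(K^\op,(L\S)^\op)^\op \simeq \FunL(\Fun(K,\S),(L\S)^\op)^\op \\
&\simeq \FunR((L\S)^\op,\Fun(K,\S)) \simeq L\S\otimes\Fun(K,\S) \simeq L\Fun(K,\S).
\end{align*}

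For the triangle, the key point is that this chain is natural in the smashing localization $L$: apart from the fixed category $\Fun(K,\S)$, every term involves only $L\S$ (or its opposite) and functoriality in it. Specializing the natural transformation $\id\Rightarrow L$ of localization functors — whose component at $\C$ is the localization map $\C\to L\C$, hence at $\S$ is $v$ and at $\Fun(K,\S)$ is $u$ — naturality produces a commuting square whose horizontal maps are $v_*$ and $u$, whose right vertical is the equivalence just built, and whose left vertical is the same chain evaluated at $L=\id$, an autoequivalence $\Phi$ of $\Fun(K,\S)$. It therefore remains to show $\Phi\simeq\id_{\Fun(K,\S)}$.

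When $L=\id$ the category $(L\S)^\op$ becomes $\S^\op$, and one traces $F:K\to\S$ around the chain: it becomes $F^\op:K^\op\to\S^\op$, then its colimit-preserving extension $\widehat{F^\op}:\Fun(K,\S)=\P(K^\op)\to\S^\op$ along the Yoneda embedding $j:K^\op\to\P(K^\op)$, then the right adjoint $G:\S^\op\to\Fun(K,\S)$ of $\widehat{F^\op}$, then $G$ evaluated at $*\in\S^\op$. Using the Yoneda lemma, the adjunction $\widehat{F^\op}\dashv G$, and the fact that left Kan extension along $j$ restricts to the identity on representables, one obtains for every $k\in K$
\begin{align*}
\Phi(F)(k) &\simeq G(*)(k) \simeq \Map_{\Fun(K,\S)}(j(k),G(*)) \simeq \Map_{\S^\op}(\widehat{F^\op}(j(k)),*) \\
&\simeq \Map_{\S^\op}(F^\op(k),*) \simeq \Map_\S(*,F(k)) \simeq F(k),
\end{align*}
naturally in $k$; since $\Phi$ preserves colimits this passes from representables to all of $\Fun(K,\S)$, giving $\Phi\simeq\id$. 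The main obstacle is precisely these two bookkeeping steps — establishing the naturality in $L$ with its interlocking opposites, and then correctly threading $F$ through the adjunction-plus-Yoneda computation when $L=\id$ — everything else being formal.
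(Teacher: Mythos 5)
Your proposal is correct and follows essentially the same route as the paper: the identical chain of equivalences via $\FunR(\C^\op,\D)$, op-duality, and the free cocompletion, then the same naturality-in-$L$ reduction to the case $L=\id$, finished by the same Yoneda/adjunction trace showing the resulting autoequivalence of $\Fun(K,\S)$ is the identity. The only difference is your closing remark about passing from representables by colimit-preservation, which is superfluous since the computation already treats an arbitrary $F$ evaluated at each $k$.
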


\begin{thm} \label{prop-freelsmod} Let $K$ be a small simplicial set. Let $L$ be a smashing localization of $\prl$. Then $\P_{L\S}(K)$ is freely generated in $L\prl$ by $K$. More precisely, composition with $v_*\circ j:K\to \P_{L\S}(K)$ induces an equivalence of $\infty$-categories
\[\xymatrix@C+1pc{\FunL(\P_{L\S}(K),\C) \ar[r]^-\sim_-{(v_*\circ j)^*} & \Fun(K,\C) } \]
for any $\C\in L\prl$, where $v:\S\to L\S$ is the localization map and $j:K\to \P(K)$ is the Yoneda embedding.
\begin{proof}
Let $u:\P(K)\to L\P(K)$ be the localization map. We have equivalences of $\infty$-categories
\[\xymatrix{\FunL(L\P(K),\C) \ar[r]_-{u^*}^\sim & \FunL(\P(K),\C) \ar[r]^-\sim_-{j^*} & \Fun(K,\C)}\]
and the result now follows from \cref{lemma-lpk}.
\end{proof}
\end{thm}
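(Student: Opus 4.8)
The plan is to reduce the statement to the universal property of ordinary presheaf $\infty$-categories together with \cref{lemma-lpk}. By \cref{lemma-lpk} there is an equivalence $L\P(K)\simeq \P_{L\S}(K)$ under which the localization map $u:\P(K)\to L\P(K)$ corresponds to $v_*:\Fun(K,\S)\to\Fun(K,L\S)$; in particular $v_*\circ j$ corresponds to $u\circ j$, where $j:K\to\P(K)$ is the Yoneda embedding. So it suffices to show that precomposition with $u\circ j$ gives an equivalence $\FunL(L\P(K),\C)\xrightarrow{\sim}\Fun(K,\C)$ for every $\C\in L\prl$.

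First I would factor this precomposition as
\[\FunL(L\P(K),\C)\xrightarrow{\ u^*\ }\FunL(\P(K),\C)\xrightarrow{\ j^*\ }\Fun(K,\C),\]
where the second map is an equivalence by the universal property (\ref{eq-freecoco}) of $\P(K)$ as the free presentable $\infty$-category on $K$ \cite[5.1.5.6]{htt}. It then remains to check that $u^*$ is an equivalence whenever $\C$ is $L$-local. For this I would use that $L$ is smashing, so $L\P(K)\simeq\P(K)\otimes L\S$, and the closed symmetric monoidal structure of $\prl$ gives
\[\FunL(L\P(K),\C)\simeq\FunL(\P(K)\otimes L\S,\C)\simeq\FunL\bigl(\P(K),\FunL(L\S,\C)\bigr);\]
since $\C\in L\prl$, \cref{prop-freely-generated} identifies $\FunL(L\S,\C)\simeq\C$. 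Unwinding the identifications shows the resulting equivalence $\FunL(L\P(K),\C)\simeq\FunL(\P(K),\C)$ is precisely $u^*$. Alternatively, one can argue more directly: $u$ is a localization map, so $u^*$ is fully faithful with essential image the colimit-preserving functors that invert the $L$-local equivalences, and any colimit-preserving functor into an $L$-local $\infty$-category automatically inverts those.

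I expect the main obstacle to be bookkeeping rather than anything substantive: one has to make sure the composite equivalence is genuinely \emph{precomposition with $v_*\circ j$}, not merely abstractly an equivalence. Concretely, this means tracking that the tensor--hom manipulation above realizes the identification $\FunL(L\P(K),\C)\simeq\FunL(\P(K),\C)$ via $u^*$, and that \cref{lemma-lpk} carries $u$ to $v_*$ on the nose (which is exactly the commuting triangle recorded there). Once those compatibilities are in hand, the theorem follows by composing the two displayed equivalences.
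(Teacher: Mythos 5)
Your proof is correct and is essentially the paper's argument: identify $\P_{L\S}(K)\simeq L\P(K)$ via \cref{lemma-lpk}, factor restriction along $v_*\circ j$ as $j^*\circ u^*$, invoke \cite[5.1.5.6]{htt} for $j^*$, and check that $u^*$ is an equivalence for $L$-local $\C$ --- the paper gets this last step from \cref{prop-dayreflection}(2) via \cref{prop-smashing-day}, and your tensor--hom manipulation together with \cref{prop-freely-generated} is just an equivalent way of justifying the same fact. One caution: your ``alternative'' argument for $u^*$ treats $u:\P(K)\to L\P(K)$ as a reflective localization of the $\infty$-category $\P(K)$, but it is only the unit of a localization of $\prl$ and need not have a fully faithful right adjoint (e.g.\ $(-)_+:\S\to \S_*$), so the tensor--hom justification is the one to keep.
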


\subsection{Four localizations of $\prl$} \label{subsection-fourloc}

Let us start by fixing some notation.  If $\C$ is a presentable $\infty$-category, %
let $\C_*$ denote the category of pointed objects of $\C$ (the undercategory $\C_{*/}$ where $*$ is a final object of $\C$),  $\Mon(\C)$ the $\infty$-category of $E_\infty$-monoids in $\C$ (special $\Gamma$-objects), $\Grp(\C)$ the $\infty$-category of $E_\infty$-groups in $\C$ (grouplike $E_\infty$-monoids, or very special $\Gamma$-objects), and $\Sp(\C)$ the stabilization of $\C$. See \cite{ggn} or \cite[2.4.2, 1.4]{ha} for more details. Note that $\Mon(\C)$ is equivalent to $\CAlg(\C)$, where $\C$ is given the cartesian monoidal structure.

In \cite{ggn}, the authors consider the following smashing localizations of $\prl$: tensoring with $\S_*$, $\Mon(\S)$, $\Grp(\S)$ and $\Sp$, and determine descriptions for the corresponding local objects. These are pointed, semiadditive, additive and stable $\infty$-categories, respectively. We display this in the following table, where $\C$ is any presentable $\infty$-category.

 \begin{center}
  \begin{tabular}{|c|c|c|c|c|}
  \hline 
   Categorical property & Full subcat. of $\prl$ & Smashing object $L\S$ & $L\S\otimes \C$ & Unit of $L\S$\\ \hline
   No condition & $\prl$ & $\S$ & $\C$ & $*$ \\ \hline 
   Pointed & $\prlpt$ & $\S_*$ & $\C_*$ & $S^0$ \\ \hline 
   Semiadditive & $\prlsem$ & $\Mon(\S)$ & $\Mon(\C)$ & $\bigsqcup_{n\geq 0} B\Sigma_n$ \\ \hline %
   Additive & $\prladd$ & $\Grp(\S)$ & $\Grp(\C)$ & $\bS$ \\ \hline 
   Stable & $\prlst$ & $\Sp$ & $\Sp(\C)$ & $\bS$ \\ \hline 
  \end{tabular}
 \end{center}
The categorical properties above are listed in increasing order of restrictiveness. We have adopted the terminology of \cite{ha}: in \cite{ggn}, the authors use the adjective ``preadditive'' instead of ``semiadditive''.\\

In \cite[4.10, 5.1]{ggn} the authors obtain the following as a corollary of the result we summarized in \cref{thm-ggn-main}. %
There is a chain of left adjoint functors 
\begin{equation} \label{five}
 \begin{tikzcd}[row sep=0.5in, column sep=0.5in]
  \S \arrow[r, "(-)_+"] & \S_* \arrow[r] \arrow[rrr, bend right = 20, "\Sigma^{\infty}"'] & \Mon(\S) \arrow[r] & \Grp(\S) \arrow[r] & \Sp
 \end{tikzcd}
\end{equation}
given by tensoring with the respective smashing object. The resulting functor $\S_*\to \Sp$ is $\Sigma^\infty$. Each of these $\infty$-categories admits a unique closed symmetric monoidal structure which is uniquely determined by the requirement that the respective functor from $\S$ is symmetric monoidal. The symmetric monoidal structure in $\S_*$ and in $\Sp$ is given by the standard smash product of pointed spaces or spectra. Moreover, each of the functors above uniquely extends to a symmetric monoidal functor. %

Consider the table at the beginning of the section. \cref{thm-ggn-main} also says that an $\infty$-category satisfies the categorical property on the first column if and only if it is tensored over the $\infty$-category in the third column, following \cref{def-tensored}. \cref{prop-res} tells us that if we have an $\infty$-category tensored over one of the $\infty$-categories in (\ref{five}), then it is also tensored over any $\infty$-category which appears further to the left in the sequence, and that the action is obtained via restriction of scalars. We will now draw some consequences from this observation and from the analogous one for enrichments (\cref{prop-enrichments})

First, we set some notation:

\begin{notn} Let $\C$ be a pointed presentable $\infty$-category. By the above discussion, it is tensored over $\S_*$, so we denote the tensor by
\[-\odot-:\S_*\times \C\to \C.\]
\end{notn}
Note that since $S^0$ is the monoidal unit of $\S_*$, then $S^0\odot c \simeq c$ for any object $c$ in $\C$. The following result is a generalization of \cite[Corollary A.9]{kuhn}, see also \cref{rmk-loday}.

\begin{cor}\label{rmk-plus_otensor} Let $\C$ be a pointed presentable $\infty$-category. If $Y$ is a space, then for any object $c$ in $\C$, \[Y_+\odot c \simeq Y\otimes c\]
naturally in $Y$ and $c$.
Moreover, if $(X,x_0)$ is a pointed space, then
\begin{equation}\label{odot-cofib} X\odot c \simeq \cof(c\simeq *\otimes c \stackrel{x_0 \otimes \id}{\longrightarrow} X\otimes c).\end{equation}
\begin{proof} Only the second part needs comment. There is a cofiber sequence of pointed spaces
\[\xymatrix{S^0=\{*\}_+ \ar[r]^-{(x_0)_+} & X_+ \ar[r] & X }.\]
Applying $-\odot c$ gives the desired cofiber sequence in $\C$.
\end{proof}
\end{cor}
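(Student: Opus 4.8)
The plan is to produce both statements by reducing everything to the universal property of the tensor with a space together with the fact that $(-)_+:\S\to\S_*$ is a symmetric monoidal left adjoint. For the first equivalence, the key observation is that $\C$ is tensored over $\S_*$ via restriction of scalars along the symmetric monoidal localization map $\eta_\S:\S\to\S_*$, which by \cref{prop-res} and the bottom row of the chain \eqref{five} is exactly $(-)_+$; concretely, \eqref{eq-tensorextend} tells us that for $A\in\S_*$ the tensor $A\odot c$ is computed as $\eta_\S(A)\otimes_{\C}c$ whenever $A$ is in the image of $\eta_\S$... but that is not quite what is wanted, so instead I would argue directly. First I would note that since $(-)_+$ is the symmetric monoidal functor realizing $\S_*$ as a smashing localization of $\S$ in the sense relevant to \cref{prop-res}, the $\S_*$-module structure on $\C$ restricts, along $(-)_+:\S\to\S_*$, to the canonical $\S$-module structure on $\C$; that is precisely the content of the second square in \cref{prop-res}. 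Unwinding the definition of restriction of scalars, this says exactly that $Y_+\odot c\simeq Y\otimes c$ for all $Y\in\S$ and $c\in\C$, naturally in both variables, since $\res$ of the $\S_*$-action along $(-)_+$ is by definition the functor $\S\times\C\xrightarrow{(-)_+\times\id}\S_*\times\C\xrightarrow{-\odot-}\C$, and naturality in $Y$ and $c$ is built into the module-action functor being a functor.

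For the second statement, I would use the standard cofiber sequence of pointed spaces
\[\xymatrix{S^0=\{*\}_+ \ar[r]^-{(x_0)_+} & X_+ \ar[r] & X}\]
which exhibits $X$ as the cofiber (pushout along $X_+\to S^0\to *$, or equivalently the pushout of $*\leftarrow S^0\xrightarrow{(x_0)_+}X_+$) in $\S_*$. Since $\C$ is tensored over $\S_*$, the functor $-\odot c:\S_*\to\C$ preserves colimits, hence preserves this cofiber sequence; thus $X\odot c$ is the cofiber of $(x_0)_+\odot c:S^0\odot c\to X_+\odot c$. Now $S^0\odot c\simeq c$ because $S^0$ is the monoidal unit of $\S_*$, and under the identification of the first part, $(x_0)_+\odot c$ corresponds to $(x_0\otimes\id):*\otimes c\to X\otimes c$ (using $*_+=\{*\}_+=S^0$ and naturality of the equivalence $Y_+\odot c\simeq Y\otimes c$ applied to the map of spaces $x_0:*\to X$). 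Therefore $X\odot c\simeq\cof(c\simeq *\otimes c\xrightarrow{x_0\otimes\id}X\otimes c)$, as claimed.

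The only genuinely delicate point is the first part: identifying the $\S_*$-module-to-$\S$-module restriction of scalars with the pointwise formula $Y_+\odot c\simeq Y\otimes c$. This is where I would be most careful, because it requires knowing that the symmetric monoidal left adjoint $\S\to\S_*$ appearing in \cref{prop-res} (the map denoted $\eta_\S$ there, or rather the $L$-localization map $\S\to L\S$ with $L\S=\S_*$) is indeed $(-)_+$, which is recorded in \eqref{five} and the surrounding discussion, and that $\res$ along it acts on objects by $c\mapsto(Y\mapsto Y_+\odot c)$ as a functor $\C\to\FunL(\S,\C)$; once that is in hand, evaluating the transposed action functor $\S\times\C\to\C$ gives $(Y,c)\mapsto Y_+\odot c$, and comparing with the canonical $\S$-action $(Y,c)\mapsto Y\otimes c$ finishes it. Everything else is a formal consequence of colimit-preservation of the tensor and the elementary cofiber sequence $S^0\to X_+\to X$.
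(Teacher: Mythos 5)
Your argument is correct and follows essentially the same route as the paper: the identification $Y_+\odot c\simeq Y\otimes c$ is exactly the specialization of \cref{prop-res} (with $L=\mathrm{id}$, $L'\S=\S_*$, $\eta_\S=(-)_+$) that the paper leaves implicit in ``only the second part needs comment,'' and your second part applies $-\odot c$ to the cofiber sequence $S^0\to X_+\to X$ precisely as the paper does. The extra care you take in checking that the restricted $\S$-action agrees with the canonical one, and in using naturality to identify $(x_0)_+\odot c$ with $x_0\otimes\id$, is sound and just makes explicit what the paper takes for granted.
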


\begin{cor}\label{ex-spectra-odot} Let $A$ be a spectrum, $Y$ be a space and $X$ be a pointed space. Then
\[Y\otimes A \simeq \Sigma^\infty_+Y \sma A, \hspace{1cm} X\odot A \simeq \Sigma^\infty X \sma A.\]
\end{cor}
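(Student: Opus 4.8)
The plan is to read off both equivalences directly from \cref{prop-res}, which is exactly the tool set up for this purpose. Apply it with $L'$ equal to stabilization, so that $L'\prl=\prlst$ and $L'\S=\Sp$; the point is that $\Sp$ itself lies in $\prlst$ (it is the monoidal unit), and by \cref{ex-tensored-itself} its tensor over itself is the smash product $\sma$, with $\bS\sma-\simeq\id$.

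For the first equivalence I would take $L$ to be the identity localization, so that $L\S=\S$ and $\prlst\subseteq\prl$ trivially. Then \eqref{eq-tensorextend} identifies the tensor of $A\in\Sp$ with a space $Y$ as $\eta_\S(Y)\otimes_{\Sp/\Sp}A\simeq\eta_\S(Y)\sma A$, where $\eta_\S\colon\S\to\Sp$ is the localization map of \eqref{eq-eta}. It then remains to recognize $\eta_\S$ as $\Sigma^\infty_+$: this is precisely the composite $\S\xrightarrow{(-)_+}\S_*\to\cdots\to\Sp$ appearing in \eqref{five}, whose leg $\S_*\to\Sp$ is $\Sigma^\infty$. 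Hence $Y\otimes A\simeq\Sigma^\infty_+Y\sma A$.

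For the second equivalence I would instead take $L$ to be the ``pointed'' localization, so that $L\S=\S_*$ and $\prlst\subseteq\prlpt$. Now \eqref{eq-tensorextend} gives $X\odot A\simeq\eta_\S(X)\sma A$ with $\eta_\S\colon\S_*\to\Sp$ the localization map, which by \eqref{five} is $\Sigma^\infty$, so $X\odot A\simeq\Sigma^\infty X\sma A$. Alternatively, one can deduce the second equivalence from the first: by \cref{rmk-plus_otensor} one has $X\odot A\simeq\cof(A\simeq *\otimes A\to X\otimes A)$, and applying the colimit-preserving functor $-\sma A$ to the cofiber sequence $\bS=\Sigma^\infty_+ *\to\Sigma^\infty_+ X\to\Sigma^\infty X$ (image under $\Sigma^\infty$ of $S^0\to X_+\to X$) identifies this cofiber with $\Sigma^\infty X\sma A$.

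There is no genuine obstacle, since the preceding subsections have done all the work; the only points requiring care are bookkeeping ones. One must make sure the two relevant localization maps $\S\to\Sp$ and $\S_*\to\Sp$ are exactly the ones identified with $\Sigma^\infty_+$ and $\Sigma^\infty$ in the chain \eqref{five}, and that $\otimes_{\Sp/\Sp}$ is literally the smash product rather than a twisted variant of it — which is the content of \cref{ex-tensored-itself}.
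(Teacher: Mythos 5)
Your proposal is correct and is essentially the paper's own argument: the paper likewise invokes \cref{ex-tensored-itself} to see that $\Sp$ is tensored over itself by the smash product and then applies \cref{prop-res}, with the identification of the localization maps $\S\to\Sp$ and $\S_*\to\Sp$ as $\Sigma^\infty_+$ and $\Sigma^\infty$ coming from the chain (\ref{five}). Your alternative derivation of the second equivalence from the first via the cofiber sequence is also fine, but not needed.
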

This shows that in this case $\odot$ recovers the familiar smash product of a pointed space with a spectrum, as in e.g. \cite[II.2.1]{ekmm}. 
\begin{proof} By \cref{ex-tensored-itself}, $\Sp$ is tensored over itself via the smash product, so the statements follow from \cref{prop-res}. 
\end{proof}

We can apply \cref{prop-enrichments} to presentable pointed $\infty$-categories and  presentable stable $\infty$-categories. %
We obtain that $\Omega^\infty$ of the mapping spectrum is the mapping pointed space:
\begin{cor}
 Let $\C$ be a presentable stable $\infty$-category (such as $\Sp$, for example). 
 For any two objects $A, B$ in $\C$ we have an equivalence of pointed spaces
 \[ %
 \S_*(A,B)\simeq \Omega^\infty \Sp(A,B),\]
 where $\Omega^\infty$ denotes the right adjoint to $\Sigma^\infty:\S_\ast \to \Sp$. 
\end{cor}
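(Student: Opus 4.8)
The plan is to read the statement off from \cref{prop-enrichments}. Let $L$ be the smashing localization of $\prl$ with $L\prl=\prlpt$ and $L\S=\S_*$, and let $L'$ be the one with $L'\prl=\prlst$ and $L'\S=\Sp$; both appear in the table of \cref{subsection-fourloc}, and $L'\prl\subseteq L\prl$ since every presentable stable $\infty$-category is pointed. A presentable stable $\infty$-category $\C$ is then an object of $L'\prl$, hence by \cref{thm-ggn-main}(i) a module over $\Sp=L'\S$ in $\prl$, so by \cref{rmk-cotensor-enrichment} it is enriched over $\Sp$. This $\Sp$-valued enrichment is precisely what is denoted by the mapping spectrum $\Sp(A,B)$: for $\C=\Sp$ this is \cref{ex-tensored-itself}, and in general it is the definition of the mapping spectrum.

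Next I would apply \cref{prop-enrichments} to this pair $L$, $L'$. It yields that the $\S_*$-valued enrichment of $\C$ is obtained from the $\Sp$-valued one by applying $\bar{\eta}$, that is, $\S_*(A,B)\simeq \bar{\eta}(\Sp(A,B))$, where $\bar{\eta}\colon \Sp\to \S_*$ is the right adjoint of the symmetric monoidal localization map $\eta_\S\colon L\S\to L'\S$, i.e. of $\eta_\S\colon \S_*\to \Sp$. Since \cref{prop-enrichments} produces an equivalence of functors $\C^\op\times\C\to L\S=\S_*$, evaluating at $(A,B)$ gives an equivalence of pointed spaces, which is the kind of statement we want.

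It then remains only to identify $\eta_\S$ and $\bar{\eta}$. By its construction in (\ref{eq-eta}), $\eta_\S$ is the functor $\S_*\to\Sp$ given by tensoring with the smashing object $\Sp$, and by the discussion following the chain (\ref{five}) this functor is $\Sigma^\infty$; hence, by uniqueness of adjoints, its right adjoint $\bar{\eta}$ is $\Omega^\infty$. Substituting gives $\S_*(A,B)\simeq \Omega^\infty\Sp(A,B)$ as pointed spaces, as claimed. The only work here is bookkeeping — matching the abstractly defined $\eta_\S$ with the concrete $\Sigma^\infty$ and its adjoint with $\Omega^\infty$, and confirming that the $\Sp$-enrichment of an arbitrary presentable stable $\infty$-category coincides with the usual mapping spectrum — and both points are immediate from the references already invoked, so there is no substantive obstacle.
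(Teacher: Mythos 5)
Your proof is correct and follows essentially the same route as the paper: the paper also obtains this corollary by applying \cref{prop-enrichments} to the pointed and stable localizations, with $\eta_\S$ identified as $\Sigma^\infty$ via the chain (\ref{five}) and hence $\bar\eta\simeq\Omega^\infty$. Your extra bookkeeping about matching the abstract enrichment with the mapping spectrum is consistent with the paper's conventions and raises no issues.
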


In the following example, we will identify $S^1\odot A$ with the more familiar bar construction on $A$, when $A$ is an augmented commutative algebra.

\begin{ex}\label{ex-ba} Let $\C$ be a presentable symmetric monoidal $\infty$-category with monoidal product $\sma$ and monoidal unit $\1$. The $\infty$-category $\CAlg(\C)_{/\1}$ of augmented commutative algebras in $\C$ is pointed, the zero object being $\1$. Let $A$ be an augmented commutative algebra in $\C$. The bar construction $BA$ of $A$ is the pushout of $\1 \leftarrow A \to \1$ \cite[5.2.2.3/4]{ha} in $\CAlg(\C)$. In fact,
\[S^1\odot A \simeq BA.\]
To see this, first write $S^1$ as the pushout of $* \leftarrow * \sqcup * \to *$ in $\S$. Tensoring with $A$ proves that $S^1\otimes A$ is the pushout of $A \leftarrow A \sma A \to A$ in $\CAlg(\C)$; the two arrows are the multiplication map of $A$. Since $S^1\odot A$ is the pushout of the unit $A\to S^1\otimes A$ along the augmentation $A\to \1$ (\ref{odot-cofib}), then $S^1\odot A$ is the pushout of $\1 \leftarrow A \sma A \to A$. %

Now consider the following diagram:
\[\xymatrix@C-.3cm@R-.3cm{
\1 & A \ar[l] \ar[r] & A \\
\1\ar[u]\ar[d] & \1 \ar[u] \ar[d] \ar[l]\ar[r] & A \ar[u]\ar[d] \\
\1 & A \ar[l]\ar[r] & A
}\]
where all the arrows are either the unit or augmentation of $A$, or identities. Taking pushouts horizontally gives the diagram $\1 \leftarrow A \to \1$ whose pushout is $BA$, and taking pushouts vertically gives the diagram $\1 \leftarrow A\sma A \to A$ whose pushout is $S^1\odot A$, as we have seen above. This proves the result, since colimits commute with colimits \cite[5.5.2.3]{htt}.

Using \cref{rmk-iterated}, since $S^n\simeq (S^1)^{\sma n}$ we deduce that $S^n\odot A$ is equivalent to the iterated bar construction $B^n A$.

As a particular example, we may take $(\C,\sma, \1)$ to be $(\Sp,\sma,\bS)$, in which case $S^1\odot A$ is $THH(A,\bS)$, the topological Hochschild homology of $A$ relative to $\bS$, and $S^n\odot -$ gives iterated $THH$ relative to $\bS$.  %
See also \cite[7.1]{kuhn}.
\end{ex}

To close the section, we will now give two remarks that compare the tensor construction to other constructions from the literature, namely, the Loday construction and the infinite symmetric product.

\begin{rmk} \label{rmk-loday}
Let $\Fin$ denote the category of finite sets. If $X:\DDelta^\op\to \Fin$ is a finite simplicial set and $R$ is an object of a presentable $\infty$-category $\C$, there is an alternative description of $X\otimes R$ whose roots go back to \cite{loday1989}\footnote{In \cref{rmk-kanfibrant} we noted that the colimit formula for tensors with spaces could be extended to arbitrary simplicial sets.}, see also \cite{pirashvili}. In \cite[Section 3]{glasman} this description is recast in an $\infty$-categorical framework as follows: %
\[X\otimes R\simeq \colim (\L_R \circ X),\] where $\L_R$, the \emph{Loday functor} for $R$, is the left Kan extension shown in the following diagram:
\[\xymatrix{& \ast \ar[r]^-{\{R\}} \ar[d]_-{\{[1]\}} & \C. \\ \DDelta^\op \ar[r]_-X & \Fin \ar[ru]_-{\L_R}}\]
Here $[1]$ denotes the one-point set. By the colimit formula for Kan extensions, $\L_R(U)$ is the coproduct of as many copies of $R$ as there are elements in $U$, for any finite set $U$, so if $\C$ has a cocartesian symmetric monoidal structure $\sma$, then $\L_R(U)=R^{\sma U}$. When $X$ is the simplicial model for the circle $S^1$ given by $\Delta^1/\partial \Delta^1$, then $S^1\otimes R$ is the cyclic bar construction on $R$, $B^\cy R$. When $\C=\CAlg(\Sp)$ this recovers the equivalence $S^1\otimes R \simeq THH(R)$ \cite{msv}.

Let $R$ be a commutative algebra in a presentable symmetric monoidal $\infty$-category $\M$ with monoidal product $\sma$. Then if $X$ is pointed, there is a version of the above construction of $X\otimes R$ relative to an $R$-module $M$, whose output is an object of $\M$ (see \cite{pirashvili} for the classical case and \cite[Section 4]{glasman} for the $\infty$-categorical version of it).

Let us look at a particular case of this: If $A$ is an augmented commutative $R$-algebra, then $R$ becomes an $A$-module via the augmentation $A\to R$, and the corresponding Loday functor of $A$ relative to $R$ is called $\L_{R,A}$: it takes a finite pointed set $U$ as input and returns the augmented commutative $R$-algebra $R\sma A^{\sma U_0}$, where $U_0$ is the set $U$ stripped of its basepoint. We then get \[R \sma_A (X\otimes_R A) \simeq \colim (\L_{R,A}\circ X).\] %
Here $X\otimes_R A$ denotes the tensor of $A$ with $X$ in the category of augmented commutative $R$-algebras; note that the forgetful functor from that category into commutative $R$-algebras is a left adjoint %
so the tensor can also be computed in the category of commutative $R$-algebras. Recalling that $R \sma_A (X\otimes_R A)$ can be constructed as the pushout of $R \leftarrow A \to X\otimes_R A$ in $\CAlg(\M)$ \cite[5.2.2.4]{ha}, then from (\ref{odot-cofib}) it follows that $R \sma_A (X\otimes_R A)\simeq X\odot_R A$, where $\odot_R$ denotes the $\odot$ construction in the pointed $\infty$-category of augmented commutative $R$-algebras. See \cite[Section 4]{kuhn} for another model of $X\odot_R A$, similar to the one in the following remark. %
\end{rmk}

\begin{rmk} \label{rmk-kuhn-sp} Kuhn \cite{kuhn} has given a different description of the tensor of an $E_\infty$-monoid in spaces %
with a pointed space. We may rephrase his construction and recover it in this context. We will do it more generally for $E_\infty$-monoids in any %
presentable $\infty$-category $\C$.

First, note that $\Mon(\C_*)\simeq \Mon(\C)$. Indeed, \begin{equation}\label{eq-monpointed}\Mon(\C_*)\simeq \C\otimes \S_*\otimes \Mon(\S)\simeq \C\otimes \Mon(\S)\simeq \Mon(\C),\end{equation} since $\Mon(\S)$ is pointed. 

Let $G\in \Mon(\C_*)$, so in particular $G$ is a functor $\Fin_*\to \C_*$. %
Let $m:\Fin_*\times \Fin_*\to \Fin_*$ denote the multiplication map which takes $(\langle n \rangle, \langle p \rangle)$ to $\langle np \rangle$. Precomposing $G$ with $m$ gives a functor $m^*G:\Fin_* \times \Fin_* \to \C_*$, whose transpose we denote by
$\overline{m^*G}:\Fin_*\to \Fun(\Fin_*,\C_*)$. Let $\bS:\Fin_*\to \S_*$ denote $\Map_{\Fin_*}(\langle1\rangle,-)$, which represents the sphere spectrum. In other words, it is the functor that considers a finite pointed set as a discrete pointed space. Taking the left Kan extension of $\overline{m^*G}$ along $\bS$ gives a functor
\[\SP(-,G):\S_*\to \Fun(\Fin_*,\C_*)\]
which preserves colimits: this notation comes from \cite{kuhn} (in the $\C=\S$ case), who explains the connection to the infinite symmetric products of \cite{mccord}. Note that $\bS(\langle 1\rangle) =S^0$ and $\overline{m^* G}(\langle 1\rangle)=G(m(\langle 1\rangle,-))=G$, so $\SP(S^0,G)=G$. Both $\SP(-,G)$ and $-\odot G$ are colimit-preserving functors with value $G$ at $S^0$; since $\S_*$ is freely generated by $S^0$ in pointed presentable $\infty$-categories (\cref{prop-freely-generated}), the two functors are equivalent. Here we used that $\Fun(\Fin_*, \C_*)$ is pointed: this follows from $\C_*$ being pointed, and is the reason we are considering $\C_*$ instead of just $\C$.

We have proven that \[\SP(-,G)\simeq -\odot G,\]recovering \cite[3.14]{kuhn} in this context (when $\C=\S$). Note that, in particular, $\SP(-,G)$ takes values in $\Mon(\C_*)$.

One can modify the construction above to obtain a similar expression for the tensor of $\Mon(\C)$ over $\Mon(\S)$, which we denote by $\odot_{\Mon}$. Indeed, a similar trick as (\ref{eq-monpointed}) shows that $\Mon(\Mon(\C))\simeq \Mon(\C)$, so a $G\in \Mon(\C)$ can be presented by a functor $G:\Fin_*\to \Mon(\C)$. Let $F:\S_*\to \Mon(\S)$ be the localization map. The statement is that the Kan extension of $G$ along $F\circ \bS$ gives $-\odot_{\Mon} G$, and the proof is similar to the one above, where we replace pointed $\infty$-categories by semiadditive ones, and we use that $F$ is symmetric monoidal, so it takes the monoidal unit of $\S_*$ to the monoidal unit of $\Mon(\S)$.

One cannot recover the tensor of $\Mon(\C)$ over $\S$ similarly as above by taking the forgetful functor $\S_*\to \S$ in place of $F$, because that forgetful functor does not preserve the monoidal unit. However, instead of considering the multiplication functor $m$, one could consider the addition functor $a:\Fin_*\times \Fin_*\to \Fin_*$ which takes $(\langle n\rangle, \langle p \rangle)$ to $\langle n+p\rangle$. In this case, if $G:\Fin_*\to \C$ is in $\Mon(\C)$, then the Kan extension of $\overline{a^*G}:\Fin_*\to \Fun(\Fin_*,\C)$ along the functor $\Fin_*\to \S$ which considers a finite pointed set as a discrete space is precisely $-\otimes G$, by a similar argument.
\end{rmk}

\section{\texorpdfstring{$E_\infty$}{E\textunderscore infty}-groups}

We now look at the case of $E_\infty$-groups $G$ more closely. We prove an $\infty$-categorical version of the splitting lemma of short exact sequences in abelian categories, and we use this to show that when $X$ is a pointed space, then the tensor $X\otimes G$ splits as a product of $G$ and $X\odot G$.

\begin{rmk} The inclusion functor $\Grp(\S) \to \Mon(\S)$ preserves colimits \cite[5.2.6.9]{ha}. %
Therefore, if $X$ is a space and $G$ is a $E_\infty$-group, then $X\otimes G$ can be computed in either $\infty$-category, and similarly for pointed $X$ and $X\odot G$. This follows either from (\ref{def-tensor}) and (\ref{odot-cofib}) or from \cref{rmk-functor-tensor}.
\end{rmk}

Let $\eqv{\Grp(\S)}{\Spc}{B^\infty}{\Omega^\infty}$ denote the $\infty$-categorical incarnation of the adjoint equivalence between $E_\infty$-groups and connective spectra \cite[5.2.6.26]{ha}. The symmetric monoidal functor $\Grp(\S)\to \Sp$ in (\ref{five}) factors as the composition of the symmetric monoidal functor $B^\infty$ followed by the inclusion $\Spc\to \Sp$ \cite[5.3(ii)]{ggn}.

\begin{prop}\label{prop-tensor_grouplike} Let $G$ be an $E_\infty$-group and $X$ be a pointed space. There is an equivalence of $E_\infty$-groups
\[X\odot G \simeq \Omega^\infty(\Sigma^\infty X\sma B^\infty G).\]
\begin{proof}
Let $F:\S_*\to \Grp(\S)$ denote the functor in (\ref{five}) and let $\boxtimes$ denote the monoidal product of $\Grp(\S)$. By \cref{prop-res}, $X\odot G\simeq FX \boxtimes G$. Applying $B^\infty$, we obtain equivalences of connective spectra
\[B^\infty(X\odot G) \simeq B^\infty(FX\boxtimes G) \simeq B^\infty FX \sma B^\infty G \simeq \Sigma^\infty X \sma B^\infty G.\]
Applying $\Omega^\infty$ to the above equivalence gives the result.
\end{proof}
\end{prop}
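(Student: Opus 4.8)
The plan is to reduce the statement to the symmetric monoidality of the equivalence $B^\infty : \Grp(\S) \to \Spc$ together with the identification of the functor $F : \S_* \to \Grp(\S)$ from (\ref{five}) under the equivalence between connective spectra and $E_\infty$-groups. First I would invoke \cref{prop-res}: since $\Grp(\S)$ is the smashing localization of $\prl$ corresponding to additive presentable $\infty$-categories and $\S_*$ appears further to the left in the chain (\ref{five}), the tensor $X \odot G$ of the pointed presentable $\infty$-category $\Grp(\S)$ over $\S_*$ is computed by restriction of scalars along the symmetric monoidal localization map $F : \S_* \to \Grp(\S)$, namely $X \odot G \simeq FX \boxtimes G$, where $\boxtimes$ is the monoidal product on $\Grp(\S)$. (Here I use that the underlying tensor of $\Grp(\S)$ over itself is its monoidal product, by \cref{ex-tensored-itself}.)

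Next I would pass through the adjoint equivalence $B^\infty : \Grp(\S) \xrightarrow{\sim} \Spc$, which is symmetric monoidal by \cite[5.3]{ggn}. Applying $B^\infty$ to $FX \boxtimes G$ gives $B^\infty(FX) \sma B^\infty G$, since a symmetric monoidal equivalence carries $\boxtimes$ to the smash product on $\Spc$. The remaining point is to identify $B^\infty(FX)$ with $\Sigma^\infty X$: this follows from the fact, recalled in the excerpt just before the proposition, that the symmetric monoidal functor $\Grp(\S) \to \Sp$ of (\ref{five}) factors as $B^\infty$ followed by the inclusion $\Spc \to \Sp$, while by the same chain (\ref{five}) the composite $\S_* \xrightarrow{F} \Grp(\S) \to \Sp$ is $\Sigma^\infty$; hence $B^\infty(FX) \simeq \Sigma^\infty X$ for all pointed spaces $X$ (as objects of $\Spc$, using that $\Sigma^\infty$ lands in connective spectra). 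Combining these equivalences yields $B^\infty(X \odot G) \simeq \Sigma^\infty X \sma B^\infty G$ as connective spectra.

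Finally I would apply the inverse equivalence $\Omega^\infty : \Spc \to \Grp(\S)$ to both sides. Since $\Omega^\infty$ is inverse to $B^\infty$, the left-hand side returns $X \odot G$, and the right-hand side becomes $\Omega^\infty(\Sigma^\infty X \sma B^\infty G)$, giving the desired equivalence of $E_\infty$-groups. I do not expect any serious obstacle here: the only mildly delicate point is bookkeeping the naturality and symmetric monoidality claims, all of which are supplied by \cref{prop-res}, \cite[5.3]{ggn}, and the discussion of the chain (\ref{five}) already in the text; everything else is formal manipulation of adjoint equivalences.
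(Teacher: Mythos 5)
Your proposal is correct and follows essentially the same route as the paper's proof: reduce to $X\odot G\simeq FX\boxtimes G$ via \cref{prop-res}, transport through the symmetric monoidal equivalence $B^\infty$ to get $\Sigma^\infty X\sma B^\infty G$, and apply $\Omega^\infty$. The extra care you take in identifying $B^\infty(FX)\simeq \Sigma^\infty X$ via the factorization of $\Grp(\S)\to\Sp$ through $\Spc$ is exactly the justification implicit in the paper's final step.
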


See also \cref{rmk-kuhn-sp} for another interpretation of $X\odot G$.

\begin{rmk} \label{rmk-odotstable} From \cref{rmk-dependence} we know that if we fix a spectrum $E$ and $X$ is a pointed space, then $X\odot E$ only depends on $\Sigma^\infty X$. In particular, if $X$ and $Y$ are pointed spaces such that for some $n>0$ we have $\Sigma^nX\simeq \Sigma^n Y$, then $X\odot E\simeq Y\odot E$. \cref{prop-tensor_grouplike} proves that we can extend this observation to $E_\infty$-groups: under the same hypotheses,  if $G$ is an $E_\infty$-group then $X\odot G \simeq Y\odot G$. This proves that the tensor of spectra or $E_\infty$-groups with pointed spaces is a stable invariant, in the sense of \cite{lindenstrauss-richter}.
\end{rmk}

The following result is a version of the splitting lemma for short exact sequences in abelian categories.

\begin{lemma}\label{lem-splitting_lemma} 
Let $\C$ be a stable $\infty$-category and let $f:U\to V$ in $\C$. If there exists a map $g:V\to U$ such that $g\circ f\simeq \id_U$, then $(g,p):V\to U\times Z$ is an equivalence, where $p:V\to Z$ is the cofiber of $f$.
\end{lemma}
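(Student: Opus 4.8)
The plan is to exploit stability by producing the inverse directly as a map $U \times Z \to V$ and checking it is mutually inverse to $(g,p)$, or — more slickly — to recognize $V$ as a biproduct using the triangulated/fiber-sequence structure. First I would observe that since $C$ is stable, the cofiber sequence $U \xrightarrow{f} V \xrightarrow{p} Z$ is also a fiber sequence, so $U \simeq \mathrm{fib}(p)$ via $f$. The retraction $g$ of $f$ gives, by the universal property of the fiber, that the composite $V \xrightarrow{(\,\mathrm{id}-f g\,)} V$ — i.e. the "difference" of $\mathrm{id}_V$ and the idempotent $fg$, which makes sense since $C$ is stable and hence additive — factors through a section $s\colon Z \to V$ of $p$. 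The key point is that in a stable $\infty$-category one can form such differences of maps and the resulting map $V \to V$ lands in $\mathrm{fib}$ or $\mathrm{cofib}$ appropriately; this is where a little care is needed, but it is standard (e.g. via the long exact sequence of mapping spaces, or the fact that a retract in a stable category splits off as a direct summand).

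The cleanest route I would actually take: since $g\circ f \simeq \mathrm{id}_U$, the object $U$ is a retract of $V$, and in a stable (hence idempotent-complete, additive) $\infty$-category a retract is automatically a direct summand, so $V \simeq U \times W$ for some $W$. Under this splitting $f$ becomes the inclusion $U \to U \times W$ and $g$ the projection; therefore the cofiber of $f$ is computed summand-wise as $\mathrm{cofib}(U \xrightarrow{\mathrm{id}} U) \times \mathrm{cofib}(0 \to W) \simeq 0 \times W \simeq W$, so $W \simeq Z$ canonically and compatibly. Tracing through, the map $(g,p)\colon V \to U \times Z$ is precisely the identification $V \simeq U \times W \xrightarrow{\sim} U \times Z$, hence an equivalence.

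The step I expect to be the main obstacle is making precise the claim "a retract in a stable $\infty$-category splits as a direct summand" and, more importantly, identifying the complementary summand $W$ with the cofiber $Z$ in a canonical way — one must check that the section $Z \to V$ one produces is genuinely a section of $p$ and that together with $f$ it exhibits $V$ as the product. I would handle this by testing on mapping spaces: for every object $T$, apply $\Map_C(T,-)$ to get a retraction of connective... no — to get a split fiber sequence of spectra (using the enrichment of $C$ in $\mathrm{Sp}$, or just of the homotopy groups in abelian groups), invoke the ordinary splitting lemma there, and conclude by the Yoneda lemma that $(g,p)$ is an equivalence because it is so after applying $\Map_C(T,-)$ for all $T$. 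This reduces everything to the classical statement in the stable homotopy category of spectra (or in abelian groups degreewise), which is routine.
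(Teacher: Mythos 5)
Your final argument is correct, but it takes a genuinely different route from the paper. The paper's proof is purely diagrammatic: it pastes the pushout square defining $p=\cof(f)$ with the outer rectangle witnessing $\cof(\id_U)\simeq \ast$ (using $g\circ f\simeq \id_U$), deduces by the pasting law that the square with vertices $V,U,Z,\ast$ and edges $g,p$ is a pushout, and then invokes stability to convert it into a pullback, which is literally the statement that $(g,p)\colon V\to U\times Z$ is an equivalence --- no enrichment, homotopy groups, or Yoneda needed. Your route instead tests against every object $T$: since the cofiber sequence $U\to V\to Z$ is also a fiber sequence, the exact mapping-spectrum functor $\Sp(T,-)$ (or, with a little care about low degrees, the long exact sequence of homotopy groups of mapping spaces, which is unproblematic here because all mapping spaces in a stable $\infty$-category are infinite loop spaces) produces a split fiber sequence of spectra, the classical splitting lemma for abelian groups shows $(g_*,p_*)$ is an equivalence, and Yoneda detects that $(g,p)$ is an equivalence in $\C$. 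Both proofs are valid; the paper's buys brevity and stays entirely inside the formal pushout/pullback calculus, while yours reduces to the familiar algebraic splitting lemma at the cost of invoking mapping spectra (or a careful LES argument) and equivalence-detection via Yoneda. Note that your first two paragraphs (the ``$\id_V - fg$'' factorization and the ``retract splits off a summand $W$, then identify $W\simeq Z$'' sketch) are exactly the places you flag as delicate, and they are indeed where a gap would lurk; your mapping-space argument in the last paragraph correctly sidesteps them, so it is the version you should write up.
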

\begin{proof}
	Consider the following commutative diagram
	\[\xymatrix@C-.2cm@R-.2cm{
U \ar[r]^f \ar[d]  & V \ar[d]^-p\ar[r]^g & U \ar[d] \\
\ast \ar[r] & Z\ar[r] & \ast.}\]
The left inner square is a pushout by definition, while the outer square is a pushout since $g\circ f\simeq \id_U$. It follows that the inner right square is a pushout. As $\C$ is stable, this is a pullback square as well, whence the conclusion follows.
\end{proof}

We can now give a splitting for $X\otimes G$, when $X$ is pointed:

\begin{prop}\label{cor-tensorsplit}  Let $G$ be an $E_\infty$-group and $(X,x_0)$ be a pointed space. 
There is an equivalence of $E_\infty$-groups
\[X\otimes G \simeq G\times (X\odot G)\]
which is natural in $X$ and $G$. 
Explicitly, the equivalence is given by 
\begin{equation}\label{eq-splitting}\xymatrix@C+3pc{X\otimes G \simeq X_+ \odot G \ar[r]^-{(e_+ \odot \id, p\odot \id)} & G \times (X\odot G)},
\end{equation}  where $e$ and $p$ are defined below. This equivalence makes the following diagram commute, where $\pi_0$ denotes the projection, in the domain of the top left arrow the equivalence $G\simeq \ast_+\odot G$ is understood, and in the domain of the top right arrow the equivalence $G\simeq G\times (\ast \odot G)$ is understood.
\begin{equation}\label{splitdiagram}
\xymatrix{
& G \ar[ld]_-{(x_0)_+\odot \id} \ar[rd]^-{\id \times (x_0\odot \id)} \ar[dd]|\hole^(.30)\id \\
X_+\odot G \ar[rr]^(.3)\sim \ar[rd]_-{e_+ \odot \id} && G \times (X\odot G) \ar[ld]^-{\pi_0} \\
& G
}
\end{equation}
\end{prop}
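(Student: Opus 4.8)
The plan is to realize the map (\ref{eq-splitting}) as an instance of the stable splitting datum of \cref{lem-splitting_lemma}, transported into the stable $\infty$-category $\Sp$. First I would assemble the input over $\S_\ast$. Let $e\colon X\to\ast$ be the unique map and let $p\colon X_+\to X$ be the quotient map, so that $S^0=\{\ast\}_+\xrightarrow{(x_0)_+}X_+\xrightarrow{p}X$ is the cofiber sequence of pointed spaces used in the proof of \cref{rmk-plus_otensor}, and moreover $e_+\circ(x_0)_+=\id_{S^0}$. By \cref{rmk-plus_otensor} there is an equivalence $X\otimes G\simeq X_+\odot G$ in $\Grp(\S)$, and applying the colimit-preserving functor $-\odot G\colon\S_\ast\to\Grp(\S)$ to the above data produces, naturally in $X$ and $G$, a cofiber sequence
\[G\simeq S^0\odot G\xrightarrow{(x_0)_+\odot\id}X_+\odot G\xrightarrow{p\odot\id}X\odot G\]
in $\Grp(\S)$ together with a retraction $e_+\odot\id$ of its first map.

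It remains to prove that $(e_+\odot\id,\,p\odot\id)\colon X_+\odot G\to G\times(X\odot G)$ is an equivalence. Since $\Grp(\S)$ is not stable, \cref{lem-splitting_lemma} does not apply directly; instead I would transport the whole configuration along the equivalence $B^\infty\colon\Grp(\S)\xrightarrow{\sim}\Spc$ followed by the inclusion $\iota\colon\Spc\hookrightarrow\Sp$. The functor $\iota$ preserves colimits (being left adjoint to the connective-cover functor) and finite products (connective spectra are closed under finite products in $\Sp$), and $B^\infty$ preserves both, being an equivalence. Hence $\iota B^\infty$ carries the data above to a cofiber sequence in $\Sp$, a retraction of its first map, and the map $\iota B^\infty(e_+\odot\id,\,p\odot\id)$, identified with the map $\iota B^\infty(X_+\odot G)\to\iota B^\infty G\times\iota B^\infty(X\odot G)$ with components $\iota B^\infty(e_+\odot\id)$ and $\iota B^\infty(p\odot\id)$. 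Now \cref{lem-splitting_lemma} applies verbatim in $\Sp$ and shows this map is an equivalence; since $B^\infty$ is an equivalence and $\iota$ is fully faithful, hence conservative, the original map $(e_+\odot\id,\,p\odot\id)$ is an equivalence of $E_\infty$-groups.

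Finally, for the commutativity of (\ref{splitdiagram}) I would check the four triangular faces of the tetrahedron one at a time. Two are formal: $\pi_0\circ(e_+\odot\id,\,p\odot\id)=e_+\odot\id$, and $\pi_0\circ(\id\times(x_0\odot\id))=\id_G$. The face $G\to X_+\odot G\to G$ reads $(e_+\odot\id)\circ((x_0)_+\odot\id)=\id_G$, which follows from $e_+\circ(x_0)_+=\id_{S^0}$. For the last face one compares the two components of $(e_+\odot\id,\,p\odot\id)\circ((x_0)_+\odot\id)$ with those of $\id\times(x_0\odot\id)$: the first components agree by the previous identity; the second component reads $(p\odot\id)\circ((x_0)_+\odot\id)=(p\circ(x_0)_+)\odot\id$, and since $p\circ(x_0)_+\colon S^0\to X$ is the constant map at the basepoint it factors through $\ast$, so, as $\ast\odot G\simeq0$ by (\ref{odot-cofib}), this composite is the trivial map $G\to X\odot G$, matching the second component $x_0\odot\id\colon\ast\odot G\simeq0\to X\odot G$ of $\id\times(x_0\odot\id)$.

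The one genuine subtlety, and the step I expect to require the most care, is exactly the non-stability of $\Grp(\S)$: one must check that passing to $\Sp$ via $B^\infty$ and $\iota$ leaves the cofiber sequence and the finite product undisturbed, which is precisely what makes \cref{lem-splitting_lemma} applicable.
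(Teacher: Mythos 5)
Your proposal is correct and follows essentially the same route as the paper: apply $-\odot G$ to the split cofiber sequence $S^0\to X_+\to X$, pass to $\Sp$ via $B^\infty$ and the inclusion of connective spectra, invoke \cref{lem-splitting_lemma} there, and use that the inclusion preserves finite products. The only (harmless) differences are that you descend back to $\Grp(\S)$ by conservativity of the fully faithful inclusion rather than by applying the right adjoint and $\Omega^\infty$ as the paper does, and that you verify the commutativity of (\ref{splitdiagram}) face by face where the paper asserts it holds by construction.
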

\begin{proof}
Let $e:X\to *$ denote the unique map. The split cofiber sequence of pointed spaces
	\[
	\xymatrix{ \ast_+\ar@<-.5ex>[r]_{(x_0)_+} & X_+
\ar@<-.5ex>[l]_{e_+} \ar[r]^-p & X}
	\]
induces a split cofiber sequence of $E_\infty$-groups
	\[
	\xymatrix{ G \ar[r] & X\otimes G \ar[r] & X\odot G}
	\]
	after applying the functor $-\odot G$. 
	Let $i:\Spc\to \Sp$ denote the inclusion functor, which is a left adjoint. Applying $iB^\infty$ we obtain the following cofiber sequence of spectra:
		\[
	\xymatrix{ iB^\infty G \ar[r] & iB^\infty (X\otimes G) \ar[r] & iB^\infty (X\odot G).}
	\]
	Since $\Sp$ is stable, by \Cref{lem-splitting_lemma} we have $iB^\infty (X\otimes G)\simeq  iB^\infty G\times  iB^\infty (X \odot G)$. Now note that $i$ preserves finite products. To see this, first note that the right adjoint $\tau$ of $i$ is such that $\tau \circ i \simeq \id$. Moreover, if $A$ and $B$ are connective spectra, then $i(A)\times i(B)$ is connective since homotopy groups preserve products, so $i(A)\times i(B) \simeq i(C)$ for some connective spectrum $C$. Therefore,
	\[C\simeq \tau i(C) \simeq \tau(i(A)\times i(B)) \simeq \tau i(A) \times \tau i (B) \simeq A \times B,\]
	so $i(A)\times i(B)\simeq i(A\times B)$. In particular, $iB^\infty(X\otimes G) \simeq i(B^\infty G \times B^\infty (X\odot G))$, so applying $\tau$ we obtain
	\[B^\infty(X \otimes G) \simeq B^\infty G \times B^\infty (X\odot G)\]
	in $\Spc$. Applying $\Omega^\infty$ (which preserves products) finishes the proof of the equivalence. The diagram commutes by construction.
\end{proof}

\begin{rmk} \label{rmk-xmult} When $X=S^0$, the equivalence (\ref{eq-splitting}) takes the form $G\times G \to G\times G$, $(x,y)\mapsto (xy,x)$. The diagram (\ref{splitdiagram}) becomes
\[\xymatrix{
& G \ar[ld]_-{g\mapsto (1,g)} \ar[rd]^-{g\mapsto (g,1)} \ar[dd]|\hole^(.30)\id \\
G\times G \ar[rr]^(.3)\sim \ar[rd]_-\mu && G \times G \ar[ld]^-{\pi_1} \\
& G
}\]
where $\mu$ denotes the multiplication map of $G$ and $1$ denotes the unit of $G$.
Thus, $* \otimes \id:X\otimes G \to G$ is a generalization of $\mu$. Indeed, $*\otimes \id$ is the universal map from the colimit cocone $X\otimes G$ to the identity cocone $G$. Taking $X$ to be $S^0$, this universal map is $\mu$. 
See also \cref{ex-thom-iso}.
\end{rmk}

\begin{ex} \label{ex-s1odot} Let $X=S^1$. Since the monoidal unit (namely, the point) of the symmetric monoidal $\infty$-category of $E_\infty$-monoids is a final object, then an $E_\infty$-monoid is equivalently an augmented commutative algebra in $\S$. \cref{ex-ba} then implies that $S^1\odot G\simeq BG$, so the equivalence of \cref{cor-tensorsplit} recovers the equivalence $B^\cy G \simeq G\times BG$ where $B^\cy$ denotes the cyclic bar construction (\cref{rmk-loday}).
\end{ex}

\section{Thom spectra} \label{section-thom}
Having set up the formalism of tensors and having studied the case of $E_\infty$-groups in detail, we now turn our attention to tensors of spaces with Thom spectra $Mf$. Following \cite{abghr-infty}, \cite{abg}, \cite{barthel-antolin} we define the latter in the $\infty$-categorical framework. We then determine the tensor of Thom spectra with spaces, which is particularly simple when $X$ is pointed. As a particular case, we recover the Thom isomorphism theorem of Mahowald. Finally, we look at concrete examples like suspension spectra of $E_\infty$-groups and the periodic complex cobordism spectrum $MUP$.

\subsection{Generalities} 
Let $R$ be an $E_\infty$-ring spectrum. An $R$-module $M$ is \emph{invertible} if there exists an $R$-module $N$ such that $M\sma_R N\simeq R$. We let $\Pic(R)$ be the \emph{Picard space} of $R$: this is the maximal subspace of the full subcategory of $\Mod_R$ on the invertible $R$-modules. 

\begin{defn}
Let $Z$ be a space and $f:Z\to \Pic(R)$ be a map of spaces: this is a \emph{local system} of invertible $R$-modules on $Z$. The \emph{Thom $R$-module} of $f$ is defined as
\[Mf\coloneqq \colim(\xymatrix{Z\ar[r]^-f & \Pic(R) \ar@{^(->}[r] & \Mod_R}).\]
Note that this defines a functor $M:\S_{/\Pic(R)} \to \Mod_R$.
\end{defn}

\begin{ex} Let $f:Z\to \Pic(R)$ be the constant map at $R$. Then $Mf\simeq R\sma \Sigma^\infty_+Z$. %
Indeed, $Mf$ is the colimit of $\xymatrix{Z\ar[r]^-{\{\bS\}} & \Sp \ar[r]^-{R\sma -} & \Mod_R}$, but $R\sma -$ preserves colimits, so $Mf\simeq R\sma (Z\otimes \bS) \simeq R\sma \Sigma^\infty_+(Z)$ using \cref{ex-spectra-odot}. %
\end{ex}

As noted in \cite[7.7]{abg}, \cite[Section 3]{barthel-antolin}, $\Pic(R)$ is an $E_\infty$-group. If $G$ is an $E_\infty$-monoid and $f$ is an $E_\infty$-map, then $Mf$ gets an $E_\infty$-structure:

\begin{prop} \cite[8.1]{abg}, \cite[3.2]{barthel-antolin} \label{prop-thomalgebra} If $G$ is an $E_\infty$-monoid and $f:G\to \Pic(R)$ is an $E_\infty$-map, then there is an $E_\infty$-$R$-algebra $Mf$ such that its underlying $R$-module is given by the above colimit.
\end{prop}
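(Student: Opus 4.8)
The plan is to realize the $E_\infty$-$R$-algebra structure on $Mf$ as coming from the fact that the Thom spectrum functor $M\colon \S_{/\Pic(R)}\to \Mod_R$ is symmetric monoidal, where the source carries the cartesian monoidal structure (or, what amounts to the same after unwinding, the monoidal structure induced by the $E_\infty$-group structure on $\Pic(R)$) and the target carries $\sma_R$. Concretely, $M$ is the left Kan extension of the inclusion $\Pic(R)\hookrightarrow \Mod_R$ along the terminal map $\Pic(R)\to *$, composed appropriately; since $\Pic(R)$ is an $E_\infty$-group and the inclusion into $\Mod_R$ is symmetric monoidal (invertible modules are closed under $\sma_R$, and $R$ is the unit), one gets that $M$ is symmetric monoidal by the universal property of the slice / Day convolution type argument, as carried out in \cite[8.1]{abg} and \cite[3.2]{barthel-antolin}.

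First I would recall that $\S_{/\Pic(R)}$ is symmetric monoidal: $\Pic(R)$ being an $E_\infty$-group, in particular an $E_\infty$-monoid, makes $\S_{/\Pic(R)}$ into a symmetric monoidal $\infty$-category with $(Z\xrightarrow{f}\Pic(R))\boxtimes(W\xrightarrow{g}\Pic(R))$ given by $Z\times W\xrightarrow{f\times g}\Pic(R)\times\Pic(R)\xrightarrow{\mu}\Pic(R)$, the unit being $*\xrightarrow{\{R\}}\Pic(R)$. Then I would observe that the functor $M$ sends this monoidal structure to $\sma_R$ up to coherent equivalence: the colimit defining $M(f\boxtimes g)$ is a colimit over $Z\times W$ of $\mu\circ(f\times g)$ followed by the inclusion into $\Mod_R$, and because $\sma_R\colon\Mod_R\times\Mod_R\to\Mod_R$ preserves colimits separately in each variable and restricts on invertible modules to the multiplication on $\Pic(R)$, this colimit is computed as $M(f)\sma_R M(g)$. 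That $M$ preserves the unit is the previous example, $M(\{R\})\simeq R$. Promoting these identifications to a symmetric monoidal structure on $M$ is exactly the content invoked from \cite{abg}, \cite{barthel-antolin}, so I would cite it rather than reconstruct the coherences.

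Given this, the proof is short: an $E_\infty$-map $f\colon G\to \Pic(R)$ is a commutative algebra object in $\S_{/\Pic(R)}$ (using that $\CAlg$ of a slice of an $E_\infty$-monoidal $\infty$-category over its algebra object recovers augmented/over commutative algebras — here $G$ with the diagonal and unit maps makes $f$ an algebra), and a symmetric monoidal functor sends commutative algebra objects to commutative algebra objects. Hence $M(f)=Mf$ is a commutative algebra in $(\Mod_R,\sma_R)$, i.e. an $E_\infty$-$R$-algebra, and by construction its underlying $R$-module is the stated colimit. The main obstacle — and the reason this is cited rather than proved in full — is verifying that $M$ is symmetric monoidal at the level of the full $\infty$-categorical structure (all higher coherences), not merely on objects; but since \cref{prop-thomalgebra} explicitly attributes the statement to \cite[8.1]{abg} and \cite[3.2]{barthel-antolin}, the proof here consists of citing those results together with the elementary observations above identifying the monoidal structures on source and target.
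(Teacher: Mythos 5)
Your proposal is correct and takes essentially the same route as the paper: it endows $\S_{/\Pic(R)}$ with the symmetric monoidal structure coming from the $E_\infty$-structure on $\Pic(R)$ (as in \cite[2.2.2.4]{ha}), cites \cite[8.1]{abg} and \cite[3.2]{barthel-antolin} for the symmetric monoidality of the Thom functor $M$, identifies $E_\infty$-maps $f:G\to\Pic(R)$ with commutative algebra objects of the slice, and concludes since symmetric monoidal functors preserve commutative algebras. Two wording slips do not affect the argument but should be fixed: the relevant monoidal structure on $\S_{/\Pic(R)}$ is \emph{not} the cartesian one (that would be the fiber product over $\Pic(R)$) but the convolution-type product $\mu\circ(f\times g)$ you actually write down, and the algebra structure on $G$ is its $E_\infty$-multiplication, not its diagonal.
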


\begin{notn}Let $\bS[-]:\Mon(\S)\to \CAlg(\Sp)$ denote the functor induced on commutative algebras by the symmetric monoidal functor $\Sigma^\infty_+:\S\to\Sp$. For $R$ an $E_\infty$-ring spectrum, let $R[-]:\Mon(\S)\to \CAlg_R$ denote the functor $R\sma \bS[-]$.
\end{notn}

\begin{ex} \label{ex-trivialmult} If $f:G\to \Pic(R)$ is the $E_\infty$-map constant at $R$, then $Mf\simeq R[G]$ as $E_\infty$-$R$-algebras. This follows e.g. from \cite[3.16]{barthel-antolin} (see also \cite[8.4]{abg}), since $f$ admits an obvious $E_\infty$ $R$-orientation. %
\end{ex}

The proposition above is a consequence of the following theorem. First, recall that if $\C$ is a symmetric monoidal $\infty$-category and $c\in \CAlg(\C)$, then the over-category $\C_{/c}$ is also symmetric monoidal \cite[2.2.2.4]{ha}: if $f:x\to c$ and $g:y\to c$, then their monoidal product is given by the composition
\[\xymatrix{x\sma y \ar[r]^-{f\sma g} & c \sma c \ar[r]^-\mu & c}\]
where $\mu$ is the multiplication map of $c$. 
A commutative algebra in $\C_{/c}$ is then a commutative algebra map $c'\to c$ from a commutative algebra $c'$ in $\C$.

\begin{thm}\cite[8.1]{abg} The Thom $R$-module functor %
\[M: \S_{/\Pic(R)} \to \Mod_R\]
is symmetric monoidal.
\end{thm}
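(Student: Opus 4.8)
The plan is to prove that the Thom $R$-module functor $M\colon \S_{/\Pic(R)}\to \Mod_R$ is symmetric monoidal by exhibiting it as a colimit-preserving functor out of a presentable symmetric monoidal $\infty$-category, and checking that it carries the monoidal structure across. First I would recall that $\Pic(R)$ is an $E_\infty$-group, in particular a commutative algebra object in $(\S,\times)$, so by \cite[2.2.2.4]{ha} the overcategory $\S_{/\Pic(R)}$ inherits a symmetric monoidal structure: the product of $f\colon Y\to \Pic(R)$ and $g\colon Z\to \Pic(R)$ is $Y\times Z\xrightarrow{f\times g}\Pic(R)\times \Pic(R)\xrightarrow{\mu}\Pic(R)$, where $\mu$ is the (group) multiplication on $\Pic(R)$ coming from $\sma_R$ on invertible modules. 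This is a presentable symmetric monoidal $\infty$-category, and the monoidal unit is the map $*\xrightarrow{R}\Pic(R)$ picking out $R$.

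Next I would observe that $M$ is the left Kan extension along $*\to \S_{/\Pic(R)}$ (the unit) — more precisely, $M$ is determined as a colimit-preserving functor by its value on the one-object diagrams, exactly as the free-generation statements of \cref{prop-freely-generated} and \cref{prop-freelsmod} suggest. The inclusion $\Pic(R)\hookrightarrow \Mod_R$ lands in invertible modules, which are a symmetric monoidal full subcategory, and $M$ sends $(f\colon Z\to \Pic(R))$ to $\colim_{z\in Z}f(z)$. To upgrade $M$ to a symmetric monoidal functor one must produce coherent equivalences $M(f)\sma_R M(g)\xrightarrow{\ \sim\ }M(f\boxtimes g)$ compatible with associativity, symmetry and unit. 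The unit is handled by $M(*\xrightarrow{R}\Pic(R))\simeq \colim_{*}R\simeq R$. For the binary comparison, I would use that $\sma_R$ preserves colimits in each variable:
\[M(f)\sma_R M(g)\simeq \Bigl(\colim_{y\in Y}f(y)\Bigr)\sma_R\Bigl(\colim_{z\in Z}g(z)\Bigr)\simeq \colim_{(y,z)\in Y\times Z}\bigl(f(y)\sma_R g(z)\bigr)\simeq \colim_{(y,z)\in Y\times Z}(f\boxtimes g)(y,z)\simeq M(f\boxtimes g),\]
the third equivalence being precisely the definition of the monoidal product on $\S_{/\Pic(R)}$ together with the fact that $\mu$ is induced by $\sma_R$ on invertible modules.

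The main obstacle is not producing these equivalences pointwise but assembling them into a genuine morphism of $E_\infty$-algebras in $\mathrm{Fun}(\S_{/\Pic(R)}\times\S_{/\Pic(R)},\Mod_R)$, i.e. checking the full coherence. The clean way to do this, which I would follow, is to factor $M$ as a composite of symmetric monoidal functors: the inclusion $\S_{/\Pic(R)}\hookrightarrow \Fun(\Pic(R),\S)_{/\,\underline{*}}$ or rather to use that $\Pic(R)\hookrightarrow \Mod_R$ is a map of $E_\infty$-groups (from $\Pic(R)$ to the invertibles in $(\Mod_R,\sma_R)$), then apply \cite[2.2.2.4]{ha} functorially in commutative algebra maps over which one takes slices, and finally invoke that the colimit functor $\mathrm{colim}\colon \Fun(Z,\Mod_R)\to \Mod_R$ is symmetric monoidal when $\Mod_R$ is presentably symmetric monoidal and $Z$ is viewed with its cartesian structure (so that Day convolution on $\Fun(Z,\Mod_R)$ maps to $\sma_R$). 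Since all the relevant functors are colimit-preserving and $\S_{/\Pic(R)}$ is freely generated under colimits by its unit object in a suitable sense, any two symmetric monoidal colimit-preserving extensions of the pointwise formula agree; this is exactly where the machinery of Sections 1–2 pays off. I therefore expect the proof to consist of citing \cite[8.1]{abg} for the bookkeeping, with the conceptual content being the identification of the source monoidal structure and the compatibility of $\sma_R$ with colimits.
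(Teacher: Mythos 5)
The paper contains no proof of this statement: it is imported verbatim as \cite[8.1]{abg}, exactly as you predicted in your last sentence, so there is nothing internal to compare against. Your sketch does follow the same strategy as the actual proof in \cite{abg}: identify $\S_{/\Pic(R)}$ with $\Fun(\Pic(R),\S)$ by straightening, observe that the slice monoidal structure of \cite[2.2.2.4]{ha} corresponds to Day convolution for the $E_\infty$-structure on $\Pic(R)$ induced by $\sma_R$ on invertible modules, and exhibit $M$ as the symmetric monoidal, colimit-preserving extension of the symmetric monoidal inclusion $\Pic(R)\hookrightarrow\Mod_R$, with the coherence supplied by the monoidal universal property of presheaves under Day convolution (the free presentably symmetric monoidal $\infty$-category on $\Pic(R)$).

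Two points in your phrasing of the coherence step need repair. First, the claim that $\colim\colon\Fun(Z,\Mod_R)\to\Mod_R$ is symmetric monoidal ``when $Z$ is viewed with its cartesian structure'' is not the relevant statement: the monoidal structure that makes your pointwise interchange computation coherent is Day convolution on $\Fun(\Pic(R),\S)$ (equivalently on $\Fun(\Pic(R),\Mod_R)$) with respect to the multiplication $\mu$ of $\Pic(R)$, and the colimit functor is symmetric monoidal for that structure because it is left Kan extension along the $E_\infty$-map $\Pic(R)\to\ast$; a fixed fiber $Z$ with its cartesian product plays no role, since the two factors of $f\boxtimes g$ live over different bases. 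Relatedly, ``left Kan extension along $\ast\to\S_{/\Pic(R)}$ (the unit)'' is not what $M$ is (compare \cref{ex-eternal}); your subsequent correction is the right one: $\S_{/\Pic(R)}\simeq\P(\Pic(R))$ is freely generated by the full $\infty$-groupoid of one-point diagrams, i.e.\ by $\Pic(R)$ via Yoneda, and $M$ extends $\Pic(R)\hookrightarrow\Mod_R$. Second, \cref{prop-freely-generated} and \cref{prop-freelsmod} only give uniqueness of colimit-preserving extensions, not of \emph{symmetric monoidal} colimit-preserving ones; what you need is the universal property of Day convolution (e.g.\ \cite[4.8.1.10--4.8.1.12]{ha}, or the corresponding results in \cite{abg}), together with the verification that under straightening the slice structure of \cite[2.2.2.4]{ha} agrees with Day convolution --- this identification is genuine work and is exactly what is carried out in \cite{abg}. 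With those two corrections your outline is the standard argument behind the cited theorem.
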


Since symmetric monoidal functors preserve commutative algebras, we get a functor
\begin{equation}\label{M-mult} M:\Mon(\S)_{/\Pic(R)} \to \CAlg_R,\end{equation}
thus recovering \Cref{prop-thomalgebra}. The functor (\ref{M-mult}) is also symmetric monoidal \cite[3.2.4.3]{ha}. %
In particular, this says that if $f:G\to \Pic(R)$ and $g:H\to \Pic(R)$ are two $E_\infty$-maps, then
\begin{equation}\label{eq-thommonoidal} M(\xymatrix{G\times H \ar[r] ^-{f\times g} & \Pic(R)\times \Pic(R) \ar[r]^-\mu & \Pic(R)}) \simeq Mf \sma_R Mg
\end{equation}
as $E_\infty$-$R$-algebras.\\

Let $A$ be an $E_\infty$-$R$-algebra. Let us define the $E_\infty$-monoid $\Pic(R)_{\downarrow A}$ as the following pullback of symmetric monoidal $\infty$-categories:
\[\xymatrix{
\Pic(R)_{\downarrow A} \ar[r] \ar[d] & (\Mod_R)_{/A} \ar[d] \\
\Pic(R) \ar[r] & \Mod_R.}\]
Antolín-Camarena and Barthel have given the following universal property for the $E_\infty$-structure on $Mf$:

\begin{thm}\cite[3.5]{barthel-antolin} \label{prop-univthom}
Let $G$ be an $E_\infty$-monoid and $f:G\to \Pic(R)$ be an $E_\infty$-map. The $\ER$-algebra structure on $Mf$ is characterized by the following universal property: the space of $\ER$-algebra maps $\Map_{\CAlg_R}(Mf,A)$ is equivalent to the space $\Map_{\Mon(\S)/_{\Pic( R)}}(G,\Pic(R)_{\downarrow A})$ of $E_\infty$-lifts of $f$ indicated below:
\[\xymatrix{
& \Pic(R)_{\downarrow A} \ar[d] \\
G \ar@{.>}[ru] \ar[r]_-f & \Pic(R).}\]
\end{thm}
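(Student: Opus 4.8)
The plan is to derive this universal property formally from the symmetric monoidality of the Thom $R$-module functor $M\colon\S_{/\Pic(R)}\to\Mod_R$ established above, together with standard properties of symmetric monoidal adjunctions. First I would note that $M$ preserves colimits: colimits in the slice $\S_{/\Pic(R)}$ are created by the projection $\S_{/\Pic(R)}\to\S$ \cite[1.2.13.8]{htt}, so from $Mg=\colim(Z\xrightarrow{g}\Pic(R)\hookrightarrow\Mod_R)$ and associativity of colimits one gets $M(\colim_i g_i)\simeq\colim_i Mg_i$. Since $\S_{/\Pic(R)}$ and $\Mod_R$ are presentable, the adjoint functor theorem gives a right adjoint $U\colon\Mod_R\to\S_{/\Pic(R)}$. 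Next I would identify $U$ explicitly: for $N\in\Mod_R$ the object $U(N)\to\Pic(R)$ is the left fibration classified by $P\mapsto\Map_{\Mod_R}(P,N)$, i.e.\ the pullback $\Pic(R)\times_{\Mod_R}(\Mod_R)_{/N}$ — the construction defining $\Pic(R)_{\downarrow A}$, but with an arbitrary module $N$ in place of $A$. This is immediate from the natural chain $\Map_{\S_{/\Pic(R)}}(g,U(N))\simeq\lim_{z\in Z}\Map_{\Mod_R}(g(z),N)\simeq\Map_{\Mod_R}(\colim_z g(z),N)=\Map_{\Mod_R}(Mg,N)$, the first step being the identification of maps over $\Pic(R)$ with sections of the pullback along $g\colon Z\to\Pic(R)$.

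I would then pass to commutative algebra objects. As $M$ is symmetric monoidal, $U$ acquires a canonical lax symmetric monoidal structure and the adjunction lifts to an adjunction $\CAlg(M)\dashv\CAlg(U)$ between $\CAlg(\S_{/\Pic(R)})$ and $\CAlg(\Mod_R)$; this is a formal property of symmetric monoidal adjunctions \cite{ha}. Here $\CAlg(\Mod_R)=\CAlg_R$, and $\CAlg(\S_{/\Pic(R)})\simeq\Mon(\S)_{/\Pic(R)}$ by \cite[2.2.2.4]{ha}: indeed $\Pic(R)$, being an $E_\infty$-group, is a commutative algebra object of $\S$ with its cartesian monoidal structure, $\CAlg$ of $\S$ with that structure is $\Mon(\S)$, and the symmetric monoidal structure on $\S_{/\Pic(R)}$ entering the Thom construction is precisely the over-category structure of \cite[2.2.2.4]{ha}. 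Under these identifications $\CAlg(M)$ is the Thom $E_\infty$-$R$-algebra functor (\ref{M-mult}), whose value at $f\colon G\to\Pic(R)$ is the $E_\infty$-$R$-algebra $Mf$ appearing in the statement. This already yields a natural equivalence $\Map_{\CAlg_R}(Mf,A)\simeq\Map_{\Mon(\S)_{/\Pic(R)}}(f,\CAlg(U)(A))$.

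It remains to identify $\CAlg(U)(A)$ with the object $\Pic(R)_{\downarrow A}\to\Pic(R)$ of $\Mon(\S)_{/\Pic(R)}$. Its underlying object of $\S_{/\Pic(R)}$ is $\Pic(R)_{\downarrow A}\to\Pic(R)$ by the description of $U$ above, so the real content is that the $E_\infty$-monoid structure it carries — the one induced by the lax structure on $U$ — coincides with the evident ``pointwise'' structure, for which $(P,\phi\colon P\to A)\cdot(Q,\psi\colon Q\to A)=(P\sma_R Q,\ \mu_A\circ(\phi\sma_R\psi))$, with $\mu_A$ the multiplication of $A$. I expect this matching to be the main obstacle. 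The cleanest way I see to handle it is to unwind the mate construction: the lax constraints of $U$ are the transposes across $M\dashv U$ of the counit precomposed with the (inverse) monoidality constraints of $M$, and, feeding in the colimit description of $M$ and the ``space of sections'' description of $U$, one checks that these transposes are the evident convolution maps $\Pic(R)_{\downarrow N}\otimes\Pic(R)_{\downarrow N'}\to\Pic(R)_{\downarrow N\sma_R N'}$ sending $((P,\phi),(Q,\psi))$ to $(P\sma_R Q,\phi\sma_R\psi)$; applying $\CAlg$ and evaluating at $A$ then produces the displayed formula. An alternative that packages the same bookkeeping differently is to skip the passage to $\CAlg$ and instead verify by hand that $A\mapsto(\Pic(R)_{\downarrow A}\to\Pic(R))$, equipped with the evident $E_\infty$-monoid structure, is right adjoint to $\CAlg(M)$, by building the natural equivalence of mapping spaces directly from the module-level adjunction and the multiplicativity of the Thom functor and checking the coherences. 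Everything outside this last identification of $E_\infty$-structures is formal.
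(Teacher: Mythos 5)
The paper itself gives no proof of this statement: it is imported verbatim from Antolín-Camarena--Barthel \cite{barthel-antolin}, so there is no internal argument to compare against. Your sketch is, in outline, a faithful reconstruction of the argument in that reference: the Thom functor $M\colon \S_{/\Pic(R)}\to \Mod_R$ is a colimit-preserving symmetric monoidal functor (for colimit-preservation you can simply cite \cite[8.1]{abg}, as the paper does in \cref{rmk-newproof}, rather than re-deriving it), its right adjoint is computed objectwise to be $N\mapsto \Pic(R)\times_{\Mod_R}(\Mod_R)_{/N}$ via the sections-of-a-fibration description (note that $\Map_{\Mod_R}(-,N)$ classifies a \emph{right} fibration, not a left one, though over the Kan complex $\Pic(R)$ this is immaterial), and the adjunction is then lifted to commutative algebra objects using that a symmetric monoidal left adjoint has a lax symmetric monoidal right adjoint, together with the identification $\CAlg(\S_{/\Pic(R)})\simeq \Mon(\S)_{/\Pic(R)}$ recorded in the paper after \cite[2.2.2.4]{ha}. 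All of these steps are sound. The one place where real work remains is exactly the step you flag: showing that the lax symmetric monoidal structure on the right adjoint, evaluated at $A$, recovers the symmetric monoidal (pointwise/pullback) structure on $\Pic(R)_{\downarrow A}$ used in the statement. Your plan of unwinding the mate construction to exhibit the structure maps as $((P,\phi),(Q,\psi))\mapsto (P\sma_R Q,\phi\sma_R\psi)$ is the standard and viable way to close this, and it is essentially how the cited source handles it; as written, though, this coherence identification is asserted rather than carried out, so your proposal should be read as a correct strategy with that final verification deferred to (or cited from) \cite{barthel-antolin}.
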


\subsection{Behavior with respect to the tensor}

Let $R$ be an $E_\infty$-ring spectrum. We will denote the tensor of $\CAlg_R$ over spaces by $\otimes_R$. Recall that $S^1\otimes_R -$ is $THH^R$, topological Hochschild homology of $E_\infty$-$R$-algebras over the base $R$ \cite{msv}. Let $f:G\to \Pic(R)$ be a map of $E_\infty$-monoids. Let $X$ be a space. We now prove that $X \otimes_R - $ and the Thom spectrum functor commute:

\begin{prop} \label{prop-tensorcommutes} There is an equivalence of $E_\infty$-$R$-algebras
\[X\otimes_R Mf \simeq M\Big(X\otimes G \stackrel{\ast \otimes \id}{\longrightarrow} \ast \otimes G \simeq G \stackrel{f}{\to}  \Pic(R)\Big). \]
\begin{proof} Let $A$ be an $E_\infty$-$R$-algebra. Using the universal property of $\otimes_R$, \Cref{prop-univthom} and \Cref{lemma-over} applied to $\C=\Mon(\S)$, $K=\Pic(R)$ and $H=\Pic(R)_{\downarrow A}$, we obtain:
\begin{align*}
\Map_{\CAlg_R}(X\otimes_R Mf,A)
&\simeq \Map_\S(X,\Map_{\CAlg_R}(Mf,A)) \\
&\simeq \Map_\S(X,\Map_{\Mon(\S)_{/\Pic(R)}}(G,\Pic(R)_{\downarrow A})) \\
&\simeq \Map_{\Mon(\S)_{/\Pic(R)}}(X\otimes G,\Pic(R)_{\downarrow A}) \\
&\simeq \Map_{\CAlg_R}\Big(M\Big(X\otimes G \stackrel{\ast \otimes \id}{\longrightarrow} \ast \otimes G \simeq G \stackrel{f}{\to}  \Pic(R)\Big),A\Big).\qedhere
\end{align*}
\end{proof}
\end{prop}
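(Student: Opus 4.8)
The plan is to prove the equivalence by a corepresentability (Yoneda) argument: I will show that both $E_\infty$-$R$-algebras corepresent the same functor on $\CAlg_R$. Fix an $E_\infty$-$R$-algebra $A$. First, the universal property of the tensor $\otimes_R$ of $\CAlg_R$ over spaces gives a natural equivalence
\[\Map_{\CAlg_R}(X\otimes_R Mf, A) \simeq \Map_\S(X, \Map_{\CAlg_R}(Mf, A)).\]
Next I would invoke the universal property of the Thom $E_\infty$-$R$-algebra of \cref{prop-univthom}: the inner mapping space is the space of $E_\infty$-lifts of $f$ along $\Pic(R)_{\downarrow A}\to \Pic(R)$, that is, $\Map_{\Mon(\S)_{/\Pic(R)}}(G, \Pic(R)_{\downarrow A})$.

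The key move is then to commute $\Map_\S(X,-)$ past this mapping space in the over-category, and this is exactly what \cref{lemma-over} provides, applied with $\C=\Mon(\S)$, $K=\Pic(R)$, the given map $f\colon G\to\Pic(R)$, and $H=\Pic(R)_{\downarrow A}$. It yields
\[\Map_\S\big(X, \Map_{\Mon(\S)_{/\Pic(R)}}(G, \Pic(R)_{\downarrow A})\big) \simeq \Map_{\Mon(\S)_{/\Pic(R)}}(X\otimes G, \Pic(R)_{\downarrow A}),\]
where on the right $X\otimes G$ is the tensor computed in $\Mon(\S)$ (the projection $\Mon(\S)_{/\Pic(R)}\to\Mon(\S)$ creates colimits, as recorded in \cref{lemma-over}) equipped with the structure map $X\otimes G \xrightarrow{\ast\otimes\id} \ast\otimes G\simeq G\xrightarrow{f}\Pic(R)$. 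Finally, I apply \cref{prop-univthom} once more, now to this composite map $X\otimes G\to\Pic(R)$: its Thom $E_\infty$-$R$-algebra corepresents precisely $\Map_{\Mon(\S)_{/\Pic(R)}}(X\otimes G,\Pic(R)_{\downarrow A})$. Chaining the equivalences produces a natural equivalence of functors in $A\in\CAlg_R$, and the Yoneda lemma delivers the claimed equivalence of $E_\infty$-$R$-algebras.

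I expect the only real obstacle to be bookkeeping rather than mathematics: one must check that each equivalence above is natural in $A$ (so that Yoneda applies) and, more delicately, that the structure map on $X\otimes G$ coming out of \cref{lemma-over} is literally the composite $X\otimes G\xrightarrow{\ast\otimes\id}G\xrightarrow{f}\Pic(R)$ appearing in the statement, and not some other map to $\Pic(R)$. But identifying that structure map is exactly the content of the final part of \cref{lemma-over}, so once that lemma is available there is no genuine difficulty left; all the substance of the argument is packaged into the universal property of tensors in $\CAlg_R$ and the universal property of the Thom $E_\infty$-$R$-algebra (\cref{prop-univthom}), both of which we may assume.
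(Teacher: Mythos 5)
Your proposal is correct and follows essentially the same route as the paper: the same chain of natural equivalences via the tensor adjunction, the universal property of the Thom $E_\infty$-$R$-algebra (\cref{prop-univthom}) applied twice, and \cref{lemma-over} with $\C=\Mon(\S)$, $K=\Pic(R)$, $H=\Pic(R)_{\downarrow A}$, concluded by Yoneda. Your remark that the delicate point is identifying the structure map $X\otimes G\to\Pic(R)$ is exactly what the final part of \cref{lemma-over} handles, as you note.
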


\begin{rmk} \label{rmk-newproof}We have more recently found an alternative proof of the above result. Let us sketch it. First, note that the symmetric monoidal functor $M:\S_{/\Pic(R)}\to \Mod_R$ preserves colimits \cite[8.1]{abg}, so the induced functor $M:\Mon(\S)_{/\Pic(R)} \to \CAlg_R$ preserves colimits as well. Second, remark that $X\otimes G \xrightarrow{*\otimes \id}G \xrightarrow{f} \Pic(R)$ is actually the tensor of $X\in \S$ with $(G\xrightarrow{f}\Pic(R))\in \Mon(\S)_{/\Pic(R)}$ by \cref{lemma-over}. Since tensors are colimits, the proposition follows.
\end{rmk}

\begin{rmk} \label{rmk-mfunit} If in the proposition above we choose a basepoint $x_0$ in $X$, then $X\otimes_R Mf$ gets the structure of an $E_\infty$-$Mf$-algebra via the map $x_0\otimes \id: Mf\simeq *\otimes_R Mf \to X\otimes_R Mf$. Similarly, $M(X\otimes G \stackrel{\ast \otimes \id}{\longrightarrow} \ast \otimes G \simeq G \stackrel{f}{\to}  \Pic(R))$ gets the structure of an $E_\infty$-$Mf$-algebra by Thomifying the map $x_0\otimes \id:G\to X\otimes G$. Since the equivalence of \cref{prop-tensorcommutes} is natural in $X$, it commutes with these unit maps, so in fact it is an equivalence of $E_\infty$-$Mf$-algebras once we fix a basepoint in $X$.
\end{rmk}

\begin{thm}\label{thm-tensorthom} 
Suppose $G$ is an $E_\infty$-group and $X$ is pointed. There is an equivalence of $E_\infty$-$R$-algebras
\[X\otimes_R Mf \simeq Mf \sma \bS[X\odot G].\]
\end{thm}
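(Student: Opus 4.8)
The plan is to combine the description of $X\otimes_R Mf$ as a Thom $E_\infty$-$R$-algebra from \cref{prop-tensorcommutes} with the splitting $X\otimes G\simeq G\times(X\odot G)$ of \cref{cor-tensorsplit}, and then to recognize the outcome as a relative smash product by means of the monoidality~\eqref{eq-thommonoidal} of the Thom construction. First I would apply \cref{prop-tensorcommutes} to get an equivalence of $E_\infty$-$R$-algebras
\[X\otimes_R Mf\simeq M\big(X\otimes G\xrightarrow{\ast\otimes\id}G\xrightarrow{f}\Pic(R)\big),\]
so that everything reduces to evaluating the Thom $E_\infty$-$R$-algebra on the right.

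Next, since $G$ is an $E_\infty$-group and $X$ is pointed, I would invoke \cref{cor-tensorsplit} to replace $X\otimes G$ by $G\times(X\odot G)$; the key observation — furnished by the compatibility diagram~\eqref{splitdiagram} together with the identification $X\otimes G\simeq X_+\odot G$ of \cref{rmk-plus_otensor} — is that under this equivalence the structure map $\ast\otimes\id\colon X\otimes G\to G$ becomes the projection $\pi_0\colon G\times(X\odot G)\to G$. Using the naturality in $X$ of the splitting (so that the replacement respects the maps to $\Pic(R)$), this upgrades to an equivalence in $\Mon(\S)_{/\Pic(R)}$
\[\big(X\otimes G\xrightarrow{f\circ(\ast\otimes\id)}\Pic(R)\big)\simeq\big(G\times(X\odot G)\xrightarrow{f\circ\pi_0}\Pic(R)\big).\]
Then I would let $c\colon X\odot G\to\Pic(R)$ be the zero $E_\infty$-map, constant at the monoidal unit $R$; by coherent right unitality of the $E_\infty$-group $\Pic(R)$ one has $f\circ\pi_0\simeq\mu\circ(f\times c)$, which exhibits $(G\times(X\odot G)\xrightarrow{f\circ\pi_0}\Pic(R))$ as the monoidal product of $(G\xrightarrow{f}\Pic(R))$ and $(X\odot G\xrightarrow{c}\Pic(R))$ in $\Mon(\S)_{/\Pic(R)}$. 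Applying the symmetric monoidal Thom functor $M$ and \eqref{eq-thommonoidal} then yields $M(f\circ\pi_0)\simeq Mf\sma_R Mc$ as $E_\infty$-$R$-algebras, and \cref{ex-trivialmult} identifies $Mc\simeq R[X\odot G]=R\sma\bS[X\odot G]$. Finally $Mf\sma_R(R\sma\bS[X\odot G])\simeq Mf\sma\bS[X\odot G]$ by associativity of the relative smash product over $R$, and chaining all these equivalences proves the theorem.

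The invocations of \cref{prop-tensorcommutes}, \eqref{eq-thommonoidal} and \cref{ex-trivialmult} are routine; the delicate point is the middle step, namely verifying that the splitting of \cref{cor-tensorsplit} is compatible with the structure maps to $\Pic(R)$, so that all identifications genuinely take place in the over-$\infty$-category $\Mon(\S)_{/\Pic(R)}$ with its $\mu$-twisted symmetric monoidal structure rather than just on underlying $E_\infty$-spaces, and that $f\circ\pi_0$ is indeed the monoidal product of $f$ with the trivial map $c$ — this last is where the coherent unitality of $\Pic(R)$ is used. The naturality in $X$ of \cref{cor-tensorsplit} and the explicit diagram~\eqref{splitdiagram} are exactly the tools needed to make this rigorous.
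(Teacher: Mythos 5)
Your proposal is correct and follows essentially the same route as the paper's own proof: reduce via \cref{prop-tensorcommutes}, transport the structure map along the splitting of \cref{cor-tensorsplit} using diagram~(\ref{splitdiagram}), rewrite $f\circ\pi_0$ as $\mu\circ(f\times\{R\})$ by unitality of $\Pic(R)$, and conclude with the monoidality~(\ref{eq-thommonoidal}) of $M$ and \cref{ex-trivialmult}. The "delicate point" you flag is exactly what the paper handles by the same compatibility diagram and functoriality of $M$, so nothing further is needed.
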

\begin{proof} By \Cref{prop-tensorcommutes} and \Cref{cor-tensorsplit}, we have equivalences of $E_\infty$-$R$-algebras
\begin{align}
\nonumber X\otimes_R Mf
& \simeq M\Big(\xymatrix{X\otimes G \ar[r]^-{\ast \otimes \id} & \ast \otimes G \simeq G  \ar[r]^-f &  \Pic(R)}\Big) \\
& \simeq M\Big( \xymatrix{G\times (X\odot G) \ar[r]^-{\pi_1} & G \ar[r]^-f & \Pic(R) }\Big) \label{eq-tensorcommutes-1}\end{align}
where in the second equivalence we have applied functoriality of $M$. 
Since $R$ is the unit of $\Pic(R)$, the following diagram commutes, %
\[\xymatrix{
G\times (X\odot G) \ar[r]^-{\pi_1\simeq(\id\times \ast)} \ar[rd]_-{f\times \{R\}}
& G\times \ast \simeq G \ar[d]^-{f\times \{R\}} \ar[r]^-f
& \Pic(R) \\ 
& \Pic(R) \times \Pic(R) \ar[ru]_-\mu
}\]
so (\ref{eq-tensorcommutes-1}) is equivalent to Thom $E_\infty$-$R$-algebra of $\mu\circ (f\times \{R\})$. By monoidality of $M$ (\ref{eq-thommonoidal}), this is equivalent to \[Mf\sma_R M\Big(X\odot G \stackrel{\{R\}}{\to} \Pic(R)\Big).\]
By \Cref{ex-trivialmult}, it is equivalent to $Mf\sma_R R [X\odot G]\simeq Mf \sma \bS[X\odot G]$. %
\end{proof}

\begin{rmk} \label{rmk-mfunit2}
In \cref{thm-tensorthom}, we can consider $X\otimes_R Mf$ as an $E_\infty$-$Mf$-algebra as in \cref{rmk-mfunit}, and we can consider $Mf\sma \bS[X\odot G]$ as an $E_\infty$-$Mf$-algebra via the coproduct inclusion into the first factor. In this way, the equivalence $X\otimes_R Mf \simeq Mf \sma \bS[X\odot G]$ commutes with the unit maps from $Mf$, so is in fact an equivalence of $E_\infty$-$Mf$-algebras. The gist of it is that in (\ref{splitdiagram}) the maps from $G$ commute with the equivalence.
\end{rmk}

\begin{rmk} \label{rmk-before} Taking \cref{prop-tensor_grouplike} into account, \cref{thm-tensorthom} is similar to \cite[1.1]{schlichtkrull-higher} in an $\infty$-categorical setting. %
Note that whereas we consider Thom spectra of maps into $\Pic(R)$ for any $E_\infty$-ring spectrum $R$, Schlichtkrull's result is for Thom spectra of maps into $BGL_1(\bS)$. Recall that the \emph{space} $\Pic(R)$ is equivalent to the product of $BGL_1(R)$ with $\pi_0(\Pic(R))$; %
the latter is the classical Picard group of the homotopy category of $\Mod_R$, i.e. the group of isomorphism classes of $R$-modules $M$ such that there exists an $R$-module $M'$ satisfying that $M\sma_R M'$ is isomorphic to $R$ in the homotopy category of $R$-modules.

Considering Thom spectra of maps into $\Pic(R)$ as we do is therefore more general (for example, they may be non-connective whereas Thom spectra of maps into $BGL_1(\bS)$ are connective), and already taking maps into $\Pic(\bS)\simeq \Z \times BGL_1(\bS)$ allows for a nice application, see \cref{ex-mup}. %

Note that the result of Schlichtkrull has also been generalized before in a different direction. In \cite[4.2]{klang-thom}, the author determined the factorization homology of Thom $R$-algebras. On one hand, it is more general, as it can be applied to Thom $E_n$-$R$-algebras instead of only $E_\infty$; on the other hand, she only considers Thom spectra of maps into $BGL_1(R)$, whereas, more generally, we consider maps into $\Pic(R)$. Moreover, factorization homology takes a manifold with extra structure as an input, whereas we consider tensors with completely general spaces $X$. The setting and the techniques are very different, and as a consequence, the expression of the result (of the factorization homology of structured manifolds with coefficients in Thom $E_n$-$R$-algebras in her case and of tensors of Thom $E_\infty$-$R$-algebras with spaces in our case) takes a very different form at a first glance.
\end{rmk}

\begin{rmk} \label{rmk-otimesstable}In \cref{rmk-odotstable} we observed that $-\odot G$ is a stable invariant, i.e. if $X$ and $Y$ are pointed spaces such that $\Sigma^nX\simeq \Sigma^nY$ for some $n>0$, then $X\odot G \simeq Y\odot G$. From \cref{thm-tensorthom} we deduce that $-\otimes_R Mf$ is a stable invariant as well. This was observed in \cite[5.1]{lindenstrauss-richter}, but since they use Schlichtkrull's result from \cite{schlichtkrull-higher}, they only consider maps to $BGL_1(\bS)$ as input for Thom spectra, instead of a more general $\Pic(R)$ for an $E_\infty$-ring spectrum $R$.
\end{rmk}

\begin{ex} \label{ex-thom-suspension} Let $G$ be an $E_\infty$-group and let $f:G\to \Pic(R)$ be the constant map at $R$.  By \cref{ex-trivialmult}, $Mf\simeq R[G]$. From \cref{thm-tensorthom} we deduce that
\[X\otimes_R R[G] \simeq R[G] \sma \bS[X\odot G].\]
\end{ex}

\begin{ex} Let $X=S^1$. Let $G$ be an $E_\infty$-group and let $f:G\to \Pic(R)$ be an $E_\infty$-map. Since $S^1\odot G\simeq BG$ (\cref{ex-s1odot}), then the formula of \cref{thm-tensorthom} amounts to an equivalence \begin{equation}\label{thh-mf} THH^R(Mf)\simeq Mf \sma \bS[BG]\end{equation} of $E_\infty$-$R$-algebras. The equivalence (\ref{thh-mf}) has antecedents in the literature: besides the papers of Schlichtkrull and Klang mentioned in \cref{rmk-before}, there is \cite{blumberg} for Thom $E_\infty$-ring spectra of $E_\infty$-maps into $BGL_1(\bS)$, \cite{bcs} for Thom spectra of $E_3$-maps into $BGL_1(\bS)$, and \cite{bss-generalizedthom} for Thom $E_\infty$-$R$-algebras of $E_\infty$-maps into $BGL_1(R)$. These three papers take place in model-categorical contexts.
\end{ex}

\begin{ex} \label{ex-thom-iso} Consider a pointed space of the form $X_+=*\sqcup X$ where $X$ is a space. Then \cref{thm-tensorthom} amounts to an equivalence of $E_\infty$-$R$-algebras 
\begin{equation}\label{ex-thom-general}Mf\sma_R (X\otimes_R Mf) \simeq Mf \sma \bS[X\otimes G],\end{equation}
essentially given by Thomifying the map $X\otimes G \to G\times (X\odot G)$ of (\ref{eq-splitting}). When $X=\{*\}$, the equivalence becomes
\begin{equation}\label{ex-thom}Mf \sma_R Mf \simeq Mf \sma \bS[G],\end{equation}
which is the Thom isomorphism theorem going back to \cite{mahowald-thom}, see also \cite[3.16/17]{barthel-antolin}. Indeed, in this case, the equivalence is given by Thomifying the map $G\times G \to G\times G$, $(x,y)\mapsto (xy,y)$ (see \cref{rmk-xmult}). Thus, we can think of the equivalence (\ref{ex-thom-general}) for general $X$ as a generalization of this Thom isomorphism.
\end{ex}

\begin{ex} \label{ex-mup}Consider the stable $J$-homomorphism $BU\times \Z\to \Pic(\bS)$, an $E_\infty$-map of $E_\infty$-groups. Its Thom $E_\infty$-ring spectrum is the periodic complex cobordism $MUP$. Note that $MUP\simeq \bigvee_{n\in \Z} \Sigma^{2n} MU$ as spectra. %
\cref{thm-tensorthom} gives an equivalence of $E_\infty$-ring spectra
\[X\otimes MUP\simeq MUP \sma \bS[X\odot (BU \times \Z)]\]
for all pointed spaces $X$. For example, for $X=S^1$, since $B(BU\times \Z)\simeq U$ this gives an equivalence of $E_\infty$-ring spectra 
\[THH(MUP)\simeq MUP \sma \bS[U].\] %
This equivalence was briefly mentioned in \cite[8.6]{sagave-schlichtkrull}. See also \cref{ex-mup2}.
\end{ex}

\begin{ex} Let $H\Z P$ denote the periodic integral homology spectrum, which is the Thom $E_\infty$-$H\Z$-algebra of the $E_\infty$-map $\Z\to \Pic(H\Z)$ that sends $n$ to $\Sigma^{2n}H\Z$ \cite[2.3]{hahn-yuan}. Note that $H\Z P\simeq \bigvee_{n\in \Z} \Sigma^{2n} H\Z$ as $H\Z$-modules. \cref{thm-tensorthom} gives an equivalence of $E_\infty$-$H\Z$-algebras
\[X\otimes_{H\Z}H\Z P \simeq H\Z P \sma \bS[X\odot \Z].\]
For example, for $X=S^1$, we get $THH^{H\Z}(H\Z P)\simeq H\Z P \sma \bS[S^1]$.
\end{ex}

\begin{ex} Let $KU$ denote the periodic complex topological $K$-theory $E_\infty$-ring spectrum. We will see in \cref{ex-s0} that $S^1\otimes KU\simeq KU \sma \bS[BK(\Z,2)]$. Formula (\ref{thh-mf}) suggests that $KU$ could be the Thom spectrum of an $E_\infty$-map $K(\Z,2)\to \Pic(\bS)$. However, this is not the case. For if it were, then by the Thom isomorphism theorem (\cref{ex-thom-iso}) we would have that $KU\sma KU$ is equivalent to $KU \sma K(\Z,2)_+$, which is not the case, as recalled in \cref{ex-s0}. In fact, $KU$ is not even the Thom spectrum of an $E_1$-map $K(\Z,2)\to \Pic(\bS)$, since the Thom isomorphism holds for $E_1$-maps. %
Note that in \cite{ahl-ku-thom} the authors prove that the connective complex $K$-theory $ku$ is not the Thom spectrum of any $E_3$-map $X\to BGL_1(\bS)$ where $X$ is any grouplike $E_3$-space.
\end{ex}

\section{$X$-base change} \label{section-basechange}

If $A\to B$ is a morphism of $E_\infty$-ring spectra, for any space $X$ there is an induced map $X\otimes A \to X\otimes B$. Sometimes, knowing $X\otimes A$ one can get to $X\otimes B$: the  Weibel-Geller theorem \cite{weibel-geller}, for example, asserts that if $A\to B$ is an étale extension of commutative rings, then $HH(B)$, the Hochschild homology of $B$, can be computed as $HH(A)\otimes_A B$; the topological analog of this theorem was proven by Mathew \cite{mathew-thh}. We will now generalize this question to arbitrary tensors: see \cref{def-xbase} where we introduce the notion of $X$-base change. We will prove that $S^n$-base change is enough to guarantee $X$-base change for any $(n-1)$-connected pointed $X$. In this section, we work in an arbitrary presentable $\infty$-category; we will specialize to $E_\infty$-ring spectra in \cref{section-base-rings}.

\begin{defn} \label{def-xbase} Let $X$ be a pointed space and $\C$ be a presentable $\infty$-category. We say that a map $c\to d$ in $\C$ \emph{satisfies $X$-base change} if the diagram in $\C$
\[\xymatrix@C-0.2cm@R-.2cm{c\ar[r] \ar[d] & d \ar[d] \\ X\otimes c \ar[r] & X\otimes d}\]
is a pushout, where the vertical maps are given by the inclusion of the basepoint in $X$. Equivalently, we are asking for the pushout map
\begin{equation}\label{eq-xbase}(X\otimes c) \sqcup_c d \to X\otimes d\end{equation}
to be an equivalence. By Yoneda and the tensor -- mapping space adjunction, this is equivalent to
\begin{equation}\label{eq-xbase-yon}\xymatrix@C-.2cm@R-.2cm{\Map_\S(X,\Map_\C(d,z)) \ar[r] \ar[d] & \Map_\C(d,z) \ar[d] \\ \Map_\S(X,\Map_\C(c,z)) \ar[r] & \Map_\C(c,z)}\end{equation}
being a pullback in $\S$ for all $z\in \C$, where the horizontal maps are the evaluation maps at the basepoint of $X$.
\end{defn}

\begin{ex} Any map $c\to d$ satisfies $X$-base change when $X$ is contractible.
\end{ex}

We will now prove that the base change property is closed under many operations; see also \cref{rmk-notclosed} for %
a negative result. That remark and the following proposition settle the question of the closure of the base change property under limits and colimits in general, except possibly for infinite products.

\begin{prop} \label{lemma-basechange}  Let $f:c\to d$ be a map in a presentable $\infty$-category $\C$. 
\begin{enumerate}
\item \label{item-products} %
Let $X$ and $Y$ be pointed spaces. Suppose that $f$ satisfies $X$-base change and $Y$-base change. Then it satisfies $X\times Y$-base change.
\item  \label{item-colim} Let $F:I\to \S_*$ be a diagram of pointed spaces. %
Suppose that $f$ satisfies $F(i)$-base change for all $i$. Then it satisfies $(\colim_{\S_*} F)$-base change. 
\item Let $X$ and $Y$ be pointed spaces. Suppose that $f$ satisfies $X$-base change and $Y$-base change. Then it satisfies $X\sma Y$-base change.
\end{enumerate}

\begin{proof} 
\begin{enumerate}

\item
Let $z\in \C$. Applying $\Map_\S(Y,-)$ to the pullback diagram (\ref{eq-xbase-yon}) and using the product -- mapping space adjunction in $\S$ we get a pullback diagram
\[\xymatrix@C-.2cm@R-.2cm{\Map_\S(X\times Y, \Map_\C(d,z)) \ar[r]\ar[d] & \Map_\S(Y,\Map_\C(d,z)) \ar[d] \\ \Map_\S(X\times Y,\Map_\C(c,z)) \ar[r] & \Map_\S(Y,\Map_\C(c,z)).}\]
Pasting this pullback diagram with the pullback diagram (\ref{eq-xbase-yon}) where $X$ has been replaced with $Y$ gives us the result.

\item %
Let $U:\S_*\to \S$ denote the forgetful functor. Let $I^\Kan\in \S$ denote the colimit of the constant diagram $\{*\}:I\to \S$ (cf. \cref{rmk-kanfibrant}). Taking the colimit of the pushout squares witnessing that $f$ satisfies $F(i)$-base change for all $i$, we get a pushout square in $\C$
\[\xymatrix{
I^\Kan\otimes c \ar[r] \ar[d] & I^\Kan\otimes d \ar[d] \\ \colim_\S(UF) \otimes c \ar[r] & \colim_\S(UF)\otimes d 
}\]
since colimits commute with pushouts and tensors. Now, recall that $U(\colim_{\S_*}F)$ is computed by the pushout square in $\S$
\[\xymatrix{
I^\Kan \ar[r] \ar[d] & \ast \ar[d] \\ \colim_\S(UF) \ar[r] & U(\colim_{\S_*}(F)).
}\]
The above recipe for colimits in $\S_*$ is a classical result, but we quickly sketch an $\infty$-categorical proof. The universal property of pushouts implies that the inclusion $\S_*\to \S^{\Delta^1}$ from pointed spaces to arrows of spaces has a left adjoint $L$ that takes $X\to Y$ to its pushout along $X\to *$. Colimits in $\S^{\Delta^1}$ are computed pointwise, and colimits in $\S_*$ are computed by applying $L$ to their inclusion into $\S^{\Delta^1}$ \cite[5.2.7.5]{htt}. This gives the result. 

Applying $-\otimes f$ to that square we get the following commutative cube
\[\xymatrix{
I^\Kan \otimes c \ar[rr] \ar[dd] \ar[rd] && I^\Kan \otimes d \ar[dd]|!{[d]}\hole \ar[rd] \\
& c \ar[rr] \ar[dd] && d \ar[dd] \\
\colim_\S(UF) \otimes c \ar[rr]|!{[r]}\hole \ar[rd] && \colim_\S(UF) \otimes d \ar[rd] \\
& U(\colim_{\S_*}F) \otimes c \ar[rr] && U(\colim_{\S_*}F)\otimes d.
}\]
The back face and the left and right sides are pushouts, hence the front side is a pushout square as well by the pasting law \cite[4.4.2.1]{htt}, proving the result. %

\item %
Note that $X\sma Y$ is the cofiber of $X\vee Y \to X\times Y$, so the result follows from the previous points.\qedhere
\end{enumerate}
\end{proof}
\end{prop}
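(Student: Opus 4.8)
The plan is to treat part (\ref{item-products}) via the mapping-space reformulation (\ref{eq-xbase-yon}) of $X$-base change, part (\ref{item-colim}) by working directly with pushouts and a cube argument, and the third part formally from the first two. For part (\ref{item-products}): fix $z\in\C$ and apply the limit-preserving functor $\Map_\S(Y,-)$ to the pullback square (\ref{eq-xbase-yon}) for $X$. Rewriting the left-hand column with the currying equivalence $\Map_\S(Y,\Map_\S(X,W))\simeq\Map_\S(X\times Y,W)$ — under which the two basepoint-evaluation maps compose to evaluation at the basepoint of $X\times Y$ — turns this into a pullback square whose right-hand edge is the left-hand edge of the pullback square (\ref{eq-xbase-yon}) for $Y$. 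Pasting the two squares and using the pasting law for pullbacks reproduces the square (\ref{eq-xbase-yon}) for $X\times Y$, which is what we want.

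The crux is part (\ref{item-colim}). First I would record a presentation of $\colim_{\S_*}F$ that can be computed in $\S$: since the inclusion $\S_*\to\S^{\Delta^1}$, $(X,x_0)\mapsto(\ast\xrightarrow{x_0}X)$, has a left adjoint $L$ given by pushout along the map to $\ast$, and colimits in $\S^{\Delta^1}$ are pointwise \cite[5.2.7.5]{htt}, the underlying space $U(\colim_{\S_*}F)$ — with $U:\S_*\to\S$ the forgetful functor — is the pushout of $\ast\leftarrow I^\Kan\to\colim_\S(UF)$, where $I^\Kan$ is the colimit of the constant $I$-diagram at a point. I would then apply the colimit-preserving functor $-\otimes f:\S\to\Fun(\Delta^1,\C)$, $W\mapsto(W\otimes c\to W\otimes d)$, to this pushout square of spaces, producing a commutative cube in $\C$. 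Three of its faces are pushouts: the faces lying over $c$ and over $d$, being the images of the pushout square of spaces under the colimit-preserving functors $-\otimes c$ and $-\otimes d$; and the face joining the $I^\Kan$ and $\colim_\S(UF)$ corners (carrying the arrow $c\to d$), which is the colimit over $I$ of the pushout squares witnessing $F(i)$-base change, a pushout since colimits commute with pushouts and with tensors. The remaining face, joining the $\ast$ and $U(\colim_{\S_*}F)$ corners, is precisely the square expressing $(\colim_{\S_*}F)$-base change for $f$, so the pasting law \cite[4.4.2.1]{htt} shows it is a pushout too.

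Finally, part (\ref{item-colim}) makes the smash case formal: $X\sma Y$ is the cofiber of $X\vee Y\to X\times Y$, i.e.\ the pushout in $\S_*$ of the span $\ast\leftarrow X\vee Y\to X\times Y$, and $X\vee Y$ is the coproduct of $X$ and $Y$ in $\S_*$; so $f$ satisfies $\ast$-base change trivially, $(X\vee Y)$-base change by part (\ref{item-colim}) applied to the discrete two-object diagram, and $(X\times Y)$-base change by part (\ref{item-products}), and one further application of part (\ref{item-colim}) to the span yields $(X\sma Y)$-base change. I expect essentially all of the difficulty to sit in part (\ref{item-colim}): one must correctly account for the failure of $U:\S_*\to\S$ to preserve colimits — which is exactly what the $I^\Kan$ correction term records — and then keep careful track of which faces of the cube are pushouts before the pasting law can be applied. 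Part (\ref{item-products}) and the reduction of the smash case are, by comparison, routine.
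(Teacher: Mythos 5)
Your proposal is correct and follows essentially the same route as the paper: part (1) via applying $\Map_\S(Y,-)$ to the pullback reformulation (\ref{eq-xbase-yon}) and pasting, part (2) via the presentation of $U(\colim_{\S_*}F)$ as the pushout of $\ast\leftarrow I^{\Kan}\to\colim_\S(UF)$, tensoring with $f$ to get a cube with the same three pushout faces, and concluding by the pasting law, and part (3) by the same cofiber reduction (which you merely spell out in more detail than the paper does).
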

\begin{rmk} \label{rmk-notclosed}
Base change is not stable under pullbacks in general. Suppose it were, and let $f$ be a map that satisfies $X$-base change for some given connected pointed space $X$. Then $f$ would satisfy $\Omega X = *\times_X*$-base change. In \cref{ex-s0} we will give an example of an $f$ that satisfies $X$-base change for all connected pointed $X$ but does not satisfy $S^0$-base change. Now take $X$ to be $\mathbb{R}P^\infty=B\Z/2$: we would have that $f$ satisfies $\Omega B\Z/2 \simeq S^0$-base change, getting a contradiction.
\end{rmk}

The following theorem is an application of the previous proposition:

\begin{thm} \label{thm-s1enough}  Let $f:c\to d$ be a map in a presentable $\infty$-category. Let $n\geq 0$. Suppose $f$ satisfies $S^n$-base change. Then $f$ satisfies $X$-base change for any $(n-1)$-connected pointed space $X$.\footnote{Recall that all pointed spaces are $(-1)$-connected, which just means non-empty.}
\end{thm}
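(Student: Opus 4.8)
The strategy is to use CW-induction on the pointed space $X$, reducing the general $(n-1)$-connected case to the sphere $S^n$ via the closure properties established in \cref{lemma-basechange}. First I would recall the standard fact that every $(n-1)$-connected pointed space $X$ admits a CW-structure with a single $0$-cell (the basepoint) and no cells in dimensions $1,\dots,n-1$; equivalently, $X$ can be built from a point by attaching cells of dimension $\geq n$. More precisely, $X$ is the (filtered) colimit of its skeleta $X^{(k)}$ for $k\geq n$, where $X^{(n)}$ is a wedge of $n$-spheres and each $X^{(k+1)}$ sits in a pushout square
\[\xymatrix{\bigvee_\alpha S^{k} \ar[r] \ar[d] & X^{(k)} \ar[d] \\ \bigvee_\alpha D^{k+1}\simeq * \ar[r] & X^{(k+1)}}\]
of pointed spaces (using that $D^{k+1}$ is contractible, so its wedge is a point, and the attaching maps assemble into a single map $\bigvee_\alpha S^k \to X^{(k)}$).

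Next I would verify the closure properties needed to run the induction. By \cref{lemma-basechange}\eqref{item-colim}, base change is closed under arbitrary colimits of pointed spaces, so it suffices to prove that $f$ satisfies $S^k$-base change for every $k\geq n$: then it satisfies base change for each wedge $\bigvee_\alpha S^k$ (a coproduct, hence a colimit), for each skeleton $X^{(k)}$ (built by iterated pushouts, again colimits), and finally for $X=\colim_k X^{(k)}$. The induction on $k\geq n$ proceeds via the suspension isomorphism $S^{k+1}\simeq \Sigma S^k \simeq S^k \sma S^1$: since $S^1$ is a retract-free but simple space, I would instead use $S^{k+1} \simeq S^k \sma S^1$ together with part (3) of \cref{lemma-basechange}, which says base change is closed under smash products — but this requires knowing $f$ satisfies $S^1$-base change, which we do \emph{not} have a priori. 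The cleaner route: use the cofiber sequence $S^k \to * \to S^{k+1}$, i.e. $S^{k+1}\simeq \Sigma S^k$ is the pushout of $*\leftarrow S^k \to *$; by \cref{lemma-basechange}\eqref{item-colim} (pushouts being colimits), $S^{k+1}$-base change follows from $S^k$-base change and $*$-base change, the latter being automatic. Hence $S^n$-base change propagates to all $S^k$ with $k\geq n$ by induction, and then to all $(n-1)$-connected $X$ by the skeletal colimit argument.

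The main subtlety — and the step I expect to require the most care — is the compatibility of the CW-filtration of $X$ with the tensor and pushout structure: one must check that the pushout squares exhibiting the skeleta, and the colimit expressing $X$, are genuinely pushout/colimit diagrams \emph{of pointed spaces} (in $\S_*$, not merely in $\S$), so that \cref{lemma-basechange}\eqref{item-colim} applies directly. This is where the recipe for colimits in $\S_*$ recalled in the proof of \cref{lemma-basechange}\eqref{item-colim} (colimits in $\S_*$ are computed by reflecting pointwise colimits of the underlying diagram along the left adjoint $L:\S^{\Delta^1}\to \S_*$) is used to ensure the bookkeeping is correct; concretely, the contractibility of disks gives that $\bigvee_\alpha D^{k+1}$ is the zero object of $\S_*$, so the cell-attachment square is exactly a pushout along a map to $*$. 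Once this is in place, the rest is a formal consequence of \cref{lemma-basechange}.
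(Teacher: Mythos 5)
Your proposal is correct and follows essentially the same route as the paper's proof: filter the $(n-1)$-connected space $X$ by skeleta starting with a wedge of $n$-spheres, propagate $S^n$-base change to $S^k$-base change for $k\geq n$ via the pushout $*\leftarrow S^k\to *$, and conclude by the closure of base change under colimits of pointed spaces from \cref{lemma-basechange}. The rejected smash-product detour and the extra care about computing the cell-attachment pushouts in $\S_*$ are fine but do not change the argument, which matches the paper's.
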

\begin{proof}Let $X$ be a pointed space: it is the sequential colimit in $\S$ of its skeleta $X_i\to X_{i+1}$. Suppose $X$ is $(n-1)$-connected. It suffices to prove that $f$ satisfies $X_i$-base change for all $i$.

For $0\leq i \leq n-1$ (there are no such $i$ if $n=0$ and in this case this step is skipped), since $X$ is $(n-1)$-connected, we can assume that $X_i$ is a point, so $f$ satisfies $X_i$-base change for these values.

For $i=n$, using the above assumption we have that $X_n$ is a wedge of copies of $S^n$, so $f$ satisfies $X_n$-base change.

For $i\geq n$ we do induction. We have that $X_{i+1}$ is the pushout of $\bigvee * \leftarrow \bigvee S^i \to X_i$. It now suffices to observe that $f$ satisfies $S^i$-base change. Indeed, this follows by induction on $i\geq n$, by noting that $S^{i+1}$ is the pushout of $*\leftarrow S^i \to *$.
\end{proof}

See \cref{thm-etale-change} and \cref{cor-galois-change} for concrete applications of \cref{thm-s1enough}.

\section{The Thom isomorphism is not $S^0$-base change} \label{section-thomisonots0}

A map $g:c\to d$ satisfies $S^0$-base change if and only if $g\sqcup \id:c\sqcup d \to d \sqcup d$ is an equivalence. 
Note that this is a stronger condition than $c\sqcup d \simeq d \sqcup d$: for an example in %
the category of sets, 
no function $g:\{0\}\to \N$ is such that $g\sqcup \id$ is an equivalence, but $\{0\}\sqcup \N \simeq \N \sqcup \N$. 

\par 

 In this section, we show that the Thom isomorphism is another example of this phenomenon. %
 Let $G$ be an $E_\infty$-group, let $R$ be an $E_\infty$-ring spectrum and let $f:G\to \Pic(R)$ be an $E_\infty$-map. 
 The Thom isomorphism theorem 
 of \cref{ex-thom-iso} gives us an equivalence $R[G] \sma_R Mf \simeq Mf \sma_R Mf$,
 which suggests the possibility that there exists a map $R[G] \to Mf$ of $E_\infty$-$R$-algebras 
 that satisfies $S^0$-base change. We will show that this is not possible for maps of the form
 $M(h): R[G] \to Mf$, under a certain naturality assumption for the maps $h:G\to G$, and with the hypotheses that $f$ is non-torsion and that the multiplication of $Mf$ is not an equivalence (this holds for example for $Mf=MUP$, see \cref{ex-picmup}).

The gist of the proof is \cref{cor-natcol-omega}. It is a statement about $E_\infty$-groups: it says that any natural collection of endomorphisms over a fixed group $P$ (in the sense of \cref{def-naturalcoll}; we will then take $P=\Pic(R)$) has to be of the form $g\mapsto g^n$ for a fixed integer $n$.
The statement about the Thom isomorphism not being $S^0$-base change, \cref{thm-thomisnots0}, follows from general properties of Thom spectra.
 
The proof of \cref{thm-thomisnots0} is similar in spirit to the proof of the Thom isomorphism of \cref{ex-thom-iso}, in the sense that both of them reduce to a statement about $E_\infty$-groups: in the Thom isomorphism case, the key observation was that the shear map $G\times G \to G \times G$, $(x,y)\mapsto (xy,y)$ is an equivalence.

\begin{defn} \label{def-naturalcoll}Let $P$ be an object in an $\infty$-category $\C$. Let $\pi:\CP\to \C$ be the projection functor. A \emph{natural collection of endomorphisms over $P$} is a natural transformation $\pi\Rightarrow \pi$. We assemble these into the space
\[\NatCol_P(\C)\coloneqq \Map_{\Fun(\C_{/P},\C)}(\pi,\pi).\]
\end{defn}

\begin{rmk}If $H$ is a natural collection of endomorphisms over $P$, then $H$ takes an arrow $f:G\to P$ and associates to it an endomorphism $H(f):G\to G$; moreover, if $g:G'\to G$, then naturality of $H$ means that the following diagram commutes.
\[\xymatrix{G' \ar[r]^-g \ar[d]_-{H(f\circ g)} & G \ar[d]^-{H(f)} \\
G' \ar[r]_-g & G
}\]
\end{rmk}

Let us give a class of examples of natural collections of endomorphisms over an $E_\infty$-group $P$. We first need a definition and a lemma.%

\begin{defn} 
	Let $G$ be an $E_\infty$-group and $n \in \mathbb{Z}$. We define a map of $E_\infty$-groups
	$$\mathbb{P}_n: G \to G$$
	by 
	$$
	\mathbb{P}_n = 
	\begin{cases}
		G \xrightarrow{ \ \Delta \ } G^n \xrightarrow{ \ \mu \ } G & \textup{if } n \geq 0 \\
		G \xrightarrow{ \ \Delta \ } G^{-n} \xrightarrow{ \ \mu \ } G \xrightarrow{ \ i \ } G & \textup{if } n < 0 
	\end{cases}
	$$
	where $\mu$ is the multiplication map and $i$ is the inverse map.
\end{defn}

\begin{lemma}\label{rmk-pn} 
Let $n\in \Z$. There exists a natural transformation
\[\begin{tikzcd}
	{\Grp(\S)} && {\Grp(\S)}
	\arrow["{\id}"{name=0}, from=1-1, to=1-3, curve={height=-12pt}]
	\arrow["{\id}"{name=1, swap}, from=1-1, to=1-3, curve={height=12pt}]
	\arrow[Rightarrow, "{\mathbb{P}_n}", from=0, to=1, shorten <=2pt, shorten >=2pt]
\end{tikzcd}\]
 with components given by the $\bP_n$ above. Moreover, $\bP_n\simeq \bP_m$ if and only if $m=n$.

\begin{proof} Let $n\in \Z$. %
	Since $\Delta$ is a natural transformation, it suffices to observe that the multiplication map and the inverse map are natural transformations. For the multiplication, take $n\geq 0$, and observe there is a functor
	\[\Fin_*\to \Fun(\Mon(\S),\Mon(\S))\]
	that takes the active map $\langle n\rangle \to \langle 1 \rangle$ to the natural transformation $\mu:(-)^n\Rightarrow \id_{\Mon(\S)}$. Indeed, by adjunction such a functor corresponds to a functor $\Fin_*\times \Mon(\S)\to \Mon(\S)$. Replacing the first $\Mon(\S)$ by the equivalent $\Mon(\Mon(\S))$ \cite[3.2.4.5/2.4.2.5]{ha}, this functor is precisely the evaluation functor (recall that $\Mon(\Mon(\S))$ is the full subcategory of special $\Gamma$-objects in $\Fun(\Fin_*,\Mon(\S))$.
	
	To see that inversion constitutes a natural transformation $\id_{\Grp(\S)}\Rightarrow \id_{\Grp(\S)}$, consider the shear map $s:G\times G \to G\times G$. It is defined as the projection $\pi_1$ on the first factor and multiplication on the second. We have just seen that multiplication is natural: the projection is analogously proven to be natural, by considering the corresponding inert map $\langle 2\rangle \to \langle 1\rangle$. Therefore, the shear map is the component of a natural transformation. Since $G$ is an $E_\infty$-group, the shear map is invertible, thus the inverse $s^{-1}$ is the component of a natural transformation. Now, the inversion $i$ is the composition $G\simeq G \times \ast \xrightarrow{\id\times \mu_0} G\times G \xrightarrow{s^{-1}} G\times G \xrightarrow{\pi_2} G$: all of these are components of natural transformations.
	
Finally, one quickly proves that $\bP_n\simeq \bP_m$ if and only if $m=n$ by considering cyclic groups.
\end{proof}
\end{lemma}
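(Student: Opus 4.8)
The plan is to build each $\bP_n$ as a composite of natural transformations between endofunctors of $\Grp(\S)$ (most of which already make sense on $\Mon(\S)$), so that the composite is automatically natural, and then to separate distinct values of $n$ by evaluating on a cyclic group and reading off the effect on $\pi_0$.

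First I would isolate three building blocks. The diagonal $\Delta: \id \Rightarrow (-)^k$ is natural essentially by definition: for each $G$ the map $G\to G^k$ is the one into a $k$-fold product all of whose components are $\id_G$, and this prescription is functorial in $G$. The $k$-fold multiplication $\mu: (-)^k \Rightarrow \id$ is the substantive point. Here I would use that $\Mon(\S)$ is the full subcategory of special $\Gamma$-objects inside $\Fun(\Fin_*,\S)$, together with the equivalence $\Mon(\Mon(\S)) \simeq \Mon(\S)$. Concretely, I would produce a functor $\Fin_* \to \Fun(\Mon(\S),\Mon(\S))$; by adjunction this is the same datum as a functor $\Fin_* \times \Mon(\S) \to \Mon(\S)$, which after replacing the second factor by the equivalent $\Mon(\Mon(\S))$ is just the evaluation functor. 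One then checks that the active fold morphism $\langle k\rangle \to \langle 1\rangle$ in $\Fin_*$ is carried to a natural transformation $(-)^k \Rightarrow \id$ whose component at $G$ is the $k$-fold multiplication $\mu: G^k\to G$; this is the $\mu$ used in the definition of $\bP_n$, and it restricts along $\Grp(\S)\hookrightarrow\Mon(\S)$.

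Next I would handle inversion $i: \id_{\Grp(\S)} \Rightarrow \id_{\Grp(\S)}$, which is where grouplikeness is needed. Running the same $\Fin_*$-functoriality argument with the inert morphism $\langle 2\rangle \to \langle 1\rangle$ shows that the projection $\pi_1: (-)^2 \Rightarrow \id$ is likewise the component of a natural transformation, so the shear map $s: G\times G\to G\times G$ with components $(\pi_1,\mu)$ assembles into a natural transformation $(-)^2 \Rightarrow (-)^2$. On grouplike objects $s$ is an equivalence, hence $s^{-1}$ is again the component of a natural transformation; writing inversion as the composite
\[ G \simeq G\times\ast \xrightarrow{\ \id\times\mu_0\ } G\times G \xrightarrow{\ s^{-1}\ } G\times G \xrightarrow{\ \pi_2\ } G, \]
with $\mu_0: \ast\to G$ the unit, then exhibits $i$ as a composite of natural transformations. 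Stringing together the diagonal, the multiplication, and (for $n<0$) the inversion as prescribed by the definition of $\bP_n$ produces the asserted natural transformation $\id_{\Grp(\S)} \Rightarrow \id_{\Grp(\S)}$ with components $\bP_n$.

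For the distinctness claim I would evaluate the natural transformation at a discrete $E_\infty$-group, for instance the one corresponding to $\Z$ (or to $\Z/N$), and pass to $\pi_0$. Unwinding the definition, $\mu\circ\Delta$ induces $x\mapsto kx$ and $i$ induces $x\mapsto -x$, so $\bP_n$ induces multiplication by $n$ on $\pi_0$; hence an equivalence $\bP_n\simeq\bP_m$ would force multiplication by $n$ and by $m$ to agree on $\Z$, whence $n=m$, while the converse is trivial. I expect the main obstacle to be the middle step: checking that the $k$-fold multiplication, the projections, and the unit genuinely underlie natural transformations of $\infty$-categorical endofunctors --- not merely natural transformations of the induced endofunctors on homotopy categories --- which is exactly what the $\Gamma$-object formalism and the equivalence $\Mon(\Mon(\S))\simeq\Mon(\S)$ are being used to provide.
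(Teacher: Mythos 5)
Your proposal is correct and follows essentially the same route as the paper: naturality of the $k$-fold multiplication via the $\Fin_*$-parametrized evaluation functor through $\Mon(\Mon(\S))\simeq\Mon(\S)$, naturality of inversion via the shear map (using the inert map $\langle 2\rangle\to\langle 1\rangle$ for the projection and invertibility of the shear on grouplike objects), and distinctness of the $\bP_n$ by evaluating on cyclic groups. The only difference is that you spell out the $\pi_0$ computation for the last step, which the paper leaves implicit.
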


Now, if we fix an $E_\infty$-group $P$, we can whisker the natural transformation $\bP_n:\id\Rightarrow \id$ with $\pi:\Grp(\S)_{/P}\to \Grp(\S)$ to get a natural collection of endomorphisms over $P$, which we also denote by $\bP_n$. 
Our goal now is to show that these $\mathbb{P}_n$ are in fact the only examples of natural collections of endomorphisms over $P$.  We will prove that in \cref{cor-natcol-omega}, but first we need a couple of results.\\

Suppose $\C$ is a pointed presentable $\infty$-category. If $0$ is a zero object of $\C$, we have a trivial Kan fibration $\C_{/0}\to \C$, so we can choose a section $\C\to \C_{/0}$ of it. Composing it with the functor $\C_{/0}\to \C_{/P}$ induced by the zero map $0\to P$, we get a functor $Z:\C\to \C_{/P}$ which functorially chooses zero maps over $P$.

We have two functors
\[\adj{\Fun(\C,\C)}{\Fun(\C_{/P},\C)}{\pi^*}{Z^*}.\]
Note that $\pi\circ Z \simeq \id$, so $Z^* \circ \pi^* \simeq \id$. Taking appropriate mapping spaces, we get two maps
\begin{equation}\label{eq-natnatcol}\adj{\Nat(\id_\C,\id_\C)}{\NatCol_P(\C)}{\pi^*}{Z^*}\end{equation}
showing $\Nat(\id_\C,\id_\C)$ as a retract of $\NatCol_P(\C)$. Here $\Nat(\id_\C,\id_\C)$ denotes the mapping space $\Map_{\Fun(\C,\C)}(\id_\C,\id_\C)$. We will now show that under an additional condition of semiadditivity, the maps in (\ref{eq-natnatcol}) are inverse homotopy equivalences.

\begin{prop} \label{prop-nat-natcol} If $\C$ is a semiadditive presentable $\infty$-category, then
\[\adj{\Nat(\id_\C,\id_\C)}{\NatCol_P(\C)}{\pi^*}{Z^*}\]
are inverse homotopy equivalences, for all $P\in \C$.
\begin{proof} 
We already observed that $Z^* \circ \pi^*\simeq \id$,  so $\pi^*$ is an inclusion of path-components (i.e. a fully faithful functor of $\infty$-groupoids). We only need to prove it is also a surjection on $\pi_0$ (i.e. an essentially surjective functor of $\infty$-groupoids).
 More concretely: Let $H\in \NatCol_P(\C)$. In particular, $H:\CP \times \Delta^1\to \C$. We want to prove that the functor
\[\xymatrix{HZ\pi: \CP \times \Delta^1 \ar[r]^-{\pi\times \id} & \C\times \Delta^1 \ar[r]^-{Z\times \id} & \CP \times \Delta^1 \ar[r]^-{H} & \C}\]
is equivalent to $H$. %

We will achieve this by constructing an equivalence $H(f)\xrightarrow{\sim} HZ\pi(f)$ natural in $f\in \CP$. To make the argument more transparent, we first construct this equivalence for all $f$, and then we argue it is natural.\\

\underline{Step 1: Construction of an equivalence $H(f)\xrightarrow{\sim}HZ\pi(f)=H(0)$}

We first introduce some notation. Let $f,g:G\to P$. We let $f\tplus g$ denote the universal map $G\oplus G\to P$ induced by $f$ and $g$: we can express it as the composition
\[\xymatrix{G\oplus G \ar[r]^-{f\oplus g}& P \oplus P \ar[r]^-\nabla & P}\]
where $\nabla:P\oplus P \to P$ denotes the fold map. Functorially, the biproduct functor $\oplus:\C\times \C\to \C$ induces a functor $\oplus: \CP\times \CP\to \CPP$, %
and the fold map induces $\nabla_*:\CPP\to \CP$. 
We define the functor $\tplus$ as the composition $\nabla_*\circ \oplus: \CP\times \CP\to \CP$.

Let $\iota_0:G\to G\oplus G$ denote the coproduct inclusion into the first summand. By naturality of $H$, the following diagram commutes, %
\[\xymatrix{G \ar[r]^-{\iota_0} \ar[d]_-{Hf} & G \oplus G \ar[d]^-{H(f\tplus 0)} \\ G \ar[r]_-{\iota_0} & G\oplus G}\]
since $\iota_0\circ (f\tplus 0)\simeq f$. 
Using $\iota_1$, we similarly conclude that $H(f\tplus 0)\circ \iota_1 \simeq \iota_1 \circ H(0)$. This proves that $H(f\tplus 0)\simeq Hf\oplus H0$.

We have just used that $\oplus$ is the coproduct: we will now use that $\oplus$ is the product as well. Let $\Delta:G\to G\oplus G$ denote the diagonal map. We have the following commutative diagram, where the square on the left commutes by naturality of $H$:
\begin{equation}\label{eq-magicdiagram}
\begin{tikzcd}
	{G} && {G\oplus G} && {G} \\
	\\
	{G} && {G\oplus G} && {G.}
	\arrow["{\pi_1}", from=1-3, to=1-5]
	\arrow["{\Delta}", from=1-1, to=1-3]
	\arrow["{Hf}"', from=1-1, to=3-1]
	\arrow["{Hf\oplus H0}", "H(f\tplus 0)"', from=1-3, to=3-3]
	\arrow["{\Delta}", from=3-1, to=3-3]
	\arrow["{\pi_1}", from=3-3, to=3-5]
	\arrow["{H0}", from=1-5, to=3-5]
	\arrow["{\id}", from=1-1, to=1-5, curve={height=-18pt}]
	\arrow["{\id}"', from=3-1, to=3-5, curve={height=18pt}]
\end{tikzcd}\end{equation}
Here the left arrow is $H(f)$ because $\Delta \circ (f\tplus 0)\simeq f$. This constructs the equivalence $Hf\simeq H0$.\\

\underline{Step 2: Naturality of the equivalence}

We will now prove the equivalence just constructed is natural in $f$, by exhibiting the horizontal maps in (\ref{eq-magicdiagram}) as components of natural transformations. The naturality of the top and bottom parts will be clear from the construction, so we have to prove naturality of the left and right squares.\\

\underline{Step 2a: The left square}

Informally, naturality follows from applying $H$ to the following natural commutative triangle:
\[\xymatrix{G\ar[r]^-{\Delta}\ar[d]_-f & G\oplus G \ar[ld]^-{f\tplus 0} \\ P.}\]
More carefully phrased, we have a natural transformation $\Delta$ from the identity of $\CP$ to the functor $-\tplus 0$; the latter functor is the composition
\[\xymatrix{\CP \ar[r]^-\Delta & \CP\times \CP \ar[r]^-{\id \times Z\pi} & \CP \times \CP \ar[r]^-\oplus & \CPP \ar[r]^-{\nabla_*} &\CP.}\]
We can now compose $\Delta$ with $H$ in the sense of the following whiskering:
\[\begin{tikzcd}
	{\CP \times \Delta^1} && {\CP \times \Delta^1} & {\C}
	\arrow["{\id \times \id}"{name=0}, from=1-1, to=1-3, curve={height=-12pt}]
	\arrow["{(-\tplus 0)\times \id}"{name=1, swap}, from=1-1, to=1-3, curve={height=12pt}]
	\arrow["{H}", from=1-3, to=1-4]
	\arrow[Rightarrow, "{\Delta\times \id}", from=0, to=1, shift right=2, shorten <=2pt, shorten >=2pt]
\end{tikzcd}\] 
getting a natural transformation $H\Rightarrow H(-\tplus 0)$.\\

\underline{Step 2b: Natural equivalence between $H(-\tplus 0)$ and $H\oplus HZ\pi$}

We will now prove the naturality of the identification of $H(f\tplus 0)$ as a map that satisfies the universal property of the coproduct of the maps $\iota_0\circ H(f): G \to G \oplus G$ and $\iota_1 \circ H(0): G \to G \oplus G$.

We have the natural commutative triangle
\[\xymatrix{G\ar[r]^-{\iota_0}\ar[d]_-f & G\oplus G \ar[ld]^-{f\tplus 0} \\ P.}\]
More formally, the $\iota_0$ above is the component of a natural transformation $\iota_0:\id_\CP\Rightarrow - \tplus 0$; compose it with $H$.

We also have this natural commutative triangle:
\[\xymatrix{G\ar[r]^-{\iota_1}\ar[d]_-0 & G\oplus G \ar[ld]^-{f\tplus 0} \\ P.}\]
More formally, the $\iota_1$ above is the component of a natural transformation $\iota_1:Z\pi\Rightarrow - \tplus 0$. Now compose it with $H$.

This proves that we can regard $H(-\tplus 0)$ as a representative for $H \oplus HZ\pi$, which is the composition
\[\xymatrix@C+2pc{\C_{/P} \times \Delta^1 \ar[r]^-{\Delta} & \C_{/P} \times \Delta^1 \times \C_{/P} \times \Delta^1\ar[r]^-{H \times HZ\pi} & \C \times \C  \ar[r]^-{\oplus} & \C.}\]

\underline{Step 2c: The right square}

Making a minor abuse of notation for $\pi_1$, we have a natural transformation $\pi_1$ from $\oplus:\C\times \C \to \C$ to $\pi_1:\C\times \C\to \C$, which, when whiskered with the functor
\[\xymatrix@C+2pc{\C_{/P} \times \Delta^1 \ar[r]^-{\Delta} & \C_{/P} \times \Delta^1 \times \C_{/P} \times \Delta^1\ar[r]^-{H \times HZ\pi} & \C \times \C,}\]
gives the desired natural transformation $H\oplus HZ\pi\Rightarrow HZ\pi$.
\end{proof}
\end{prop}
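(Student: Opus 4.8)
The plan is to use the retraction $\pi\circ Z\simeq\id_\C$ recorded just before the statement. Precomposing natural transformations with it gives $Z^*\circ\pi^*\simeq\id$ on $\Nat(\id_\C,\id_\C)$, so $\pi^*$ is a homotopy-split monomorphism; hence $\pi^*$ and $Z^*$ will be inverse equivalences as soon as the other composite $\pi^*\circ Z^*$ is homotopic to the identity self-map of the space $\NatCol_P(\C)$. Unwinding the definitions, $\pi^*\circ Z^*$ sends $H$ to the whiskered collection $HZ\pi$, whose component at an object $f\colon G\to P$ of $\CP$ is $H(Z\pi(f))=H(0_G)$, where $0_G\colon G\to P$ is the zero map. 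So I would construct, functorially in $H$, an equivalence $H\xrightarrow{\ \sim\ }HZ\pi$ assembled out of whiskerings of natural transformations; its functoriality in $H$ is then exactly the homotopy $\pi^*\circ Z^*\simeq\id$ we need. (In particular this also yields surjectivity of $\pi^*$ on $\pi_0$.)

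For the underlying pointwise equivalence I would use semiadditivity through the biproduct. Fix $f\colon G\to P$ and form $f\tplus 0\colon G\oplus G\to P$ with $\tplus=\nabla_*\circ\oplus$. Since $(f\tplus 0)\circ\iota_0\simeq f$ and $(f\tplus 0)\circ\iota_1\simeq 0_G$, the coproduct inclusions give arrows $\iota_0\colon(f)\to(f\tplus 0)$ and $\iota_1\colon(0_G)\to(f\tplus 0)$ in $\CP$, and applying $H$ together with the coproduct universal property of $G\oplus G$ identifies $H(f\tplus 0)$ with the block-diagonal map $H(f)\oplus H(0_G)$ ($H(f)$ on the first summand, $H(0_G)$ on the second). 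Now exploit that $G\oplus G$ is also a \emph{product}: the diagonal gives an arrow $\Delta\colon(f)\to(f\tplus 0)$ in $\CP$ since $(f\tplus 0)\circ\Delta\simeq f$, so naturality of $H$ supplies an equivalence $H(f\tplus 0)\circ\Delta\simeq\Delta\circ H(f)$. Postcomposing with the projection $\pi_2\colon G\oplus G\to G$ onto the second summand, and using $\pi_2\circ\Delta\simeq\id_G$ and $\pi_2\circ(H(f)\oplus H(0_G))\simeq H(0_G)\circ\pi_2$, we get
\[ H(f)\;\simeq\;\pi_2\circ\Delta\circ H(f)\;\simeq\;\pi_2\circ H(f\tplus 0)\circ\Delta\;\simeq\;\pi_2\circ(H(f)\oplus H(0_G))\circ\Delta\;\simeq\;H(0_G)\circ\pi_2\circ\Delta\;\simeq\;H(0_G), \]
which is the desired equivalence $H(f)\xrightarrow{\ \sim\ }H(Z\pi(f))$.

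To make this natural in $f$ and functorial in $H$, I would exhibit each arrow in the chain as a whiskering of $H$ with a natural transformation of functors. The diagonal is the component of a natural transformation $\id_{\CP}\Rightarrow(-\tplus 0)$, where $(-\tplus 0)\colon\CP\to\CP$ is $\nabla_*\circ\oplus\circ(\id,Z\pi)$; whiskering it with $H$ relates $H$ to $H(-\tplus 0)$. The identification $H(-\tplus 0)\simeq H\oplus HZ\pi$ is natural because $\iota_0$ and $\iota_1$ are components of natural transformations $\id_{\CP}\Rightarrow(-\tplus 0)$ and $Z\pi\Rightarrow(-\tplus 0)$, so the coproduct property applies coherently. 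Finally $\pi_2$ is the component of a natural transformation $\oplus\Rightarrow\pi_2$ of functors $\C\times\C\to\C$, whose whiskering with $(H,HZ\pi)$ gives $H\oplus HZ\pi\Rightarrow HZ\pi$. Splicing these three whiskerings yields a natural transformation $H\Rightarrow HZ\pi$ whose components are the equivalences of the previous paragraph; being built purely from whiskerings, it depends functorially on $H$, which is the sought-for homotopy.

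The main obstacle I anticipate is precisely the coherence bookkeeping of this last step. The pointwise calculation is elementary and uses semiadditivity transparently — its one essential input is that $G\oplus G$ is simultaneously coproduct and product — but to conclude one is genuinely manipulating $(\infty,2)$-categorical data: functors between $\infty$-categories, natural transformations, their whiskerings, and coherence data such as $\pi_2\circ\Delta\simeq\id_G$ and the block-diagonal decomposition. One must set up the auxiliary functors $(-\tplus 0)$, $\oplus\colon\CP\times\CP\to\CPP$ and $\nabla_*$, together with the natural transformations $\iota_0,\iota_1,\Delta,\pi_2$ relating them, carefully enough that the splicing of whiskerings is rigorous and genuinely assembles into a homotopy of self-maps of $\NatCol_P(\C)$ rather than a mere unstructured family of pointwise equivalences. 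That verification, not the algebra, is where the work lies.
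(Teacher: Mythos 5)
Your proposal is correct and is essentially the paper's own argument: the same biproduct construction $f\tplus 0$, with the inclusions $\iota_0,\iota_1$, the diagonal $\Delta$ and the projection onto the second summand, gives the pointwise equivalence $H(f)\simeq H(0)$, and naturality in $f$ is obtained by the very same whiskerings of $H$ with $\Delta$, $\iota_0$, $\iota_1$ and the projection. The only (minor) difference is the endgame: you propose to upgrade the natural equivalence $H\simeq HZ\pi$ to a homotopy $\pi^*\circ Z^*\simeq \id$ functorial in $H$, whereas the paper uses it only objectwise in $H$, deducing that $\pi^*$ is surjective on $\pi_0$ and combining this with the retraction $Z^*\circ \pi^*\simeq \id$ — a slightly weaker statement that suffices and spares you the extra coherence bookkeeping in $H$ that you rightly flag as the remaining work.
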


We will now prove that $\Nat(\id_{\Grp(\S)},\id_{\Grp(\S)})$ is equivalent to $\End(\bS)$. First, we need a lemma.

\begin{lemma}\label{lemma-natff} Let $F:\D\to \C$ be a functor of $\infty$-categories. There is a homotopy equivalence
	\[\Nat(F,F)\simeq \Fun_F(\D, \Fun(B\N,\C))^\sim.\] Here the latter is the maximal subspace of the full subcategory of $\Fun(\D,\Fun(B\N,\C))$ on the functors that make the following diagram commute
\[\begin{tikzcd}
			\D \arrow[dr, "F"'] \arrow[rr] & & \Fun(B\N,\C) \arrow[dl, "p"] \\
			 & \C & 
		\end{tikzcd}
		\]
where $p:\Fun(B\N,\C)\to \Fun(*,\C)\simeq \C$ is induced by the inclusion $*\to B\N$.

In particular,
\[ \NatCol_P(\C)\simeq \Fun_\pi(\CP,\Fun(B\N,\C)) \hspace{.5cm} \text{and} \hspace{.5cm} \Nat(\id_\C,\id_\C)\simeq \Fun_{\id_\C}(\C,\Fun(B\N,\C)).\]
\end{lemma}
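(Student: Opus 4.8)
The plan is to reduce the statement, via the exponential law for functor $\infty$-categories, to computing the fibre of the ``underlying object'' functor $\Fun(B\N,\mathcal{E})\to\mathcal{E}$ for a single $\infty$-category $\mathcal{E}$, and then to compute that fibre by replacing $B\N$ with the categorically equivalent simplicial set $\Delta^1/\partial\Delta^1$.

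First I would use the isomorphism of simplicial sets $\Fun(\D,\Fun(B\N,\C))\cong\Fun(B\N,\Fun(\D,\C))$, which is compatible with the evaluation functors down to $\mathcal{E}\coloneqq\Fun(\D,\C)$: on the left this evaluation is $\Fun(\D,p)$, and on the right it is restriction along $*\hookrightarrow B\N$. Since $p$ is a categorical fibration (restriction along a monomorphism into an $\infty$-category), so is $\Fun(\D,p)$, and hence the $\infty$-category $\Fun_F(\D,\Fun(B\N,\C))$ --- the fibre of $\Fun(\D,p)$ over $F$ --- is identified with the fibre of $\ev\colon\Fun(B\N,\mathcal{E})\to\mathcal{E}$ over the object $F\in\mathcal{E}$. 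So it suffices to prove, for any $\infty$-category $\mathcal{E}$ and object $e$, that this fibre is equivalent to $\Map_{\mathcal{E}}(e,e)$; being a mapping space, the latter is a Kan complex, so passing to $(-)^\sim$ (and omitting it in the two displayed special cases) is harmless. The main equivalence is then the case $\mathcal{E}=\Fun(\D,\C)$, $e=F$, and the two ``in particular'' statements are the cases $F=\pi\colon\CP\to\C$ (recalling $\NatCol_P(\C)=\Nat(\pi,\pi)$ from \cref{def-naturalcoll}) and $F=\id_\C$.

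For the fibre computation I would invoke that $B\N$, the nerve of the one-object category with endomorphism monoid $\N$, is the free $\infty$-category on a single endomorphism: the map $\Delta^1/\partial\Delta^1\to B\N$ sending the non-degenerate edge to the generator $1\in\N$ is a categorical equivalence (iteratively filling inner horns in $\Delta^1/\partial\Delta^1$ produces exactly the powers of the generator, i.e.\ the monoid $\N$). As $\mathcal{E}$ is an $\infty$-category, precomposition along this equivalence yields a categorical equivalence $\Fun(B\N,\mathcal{E})\xrightarrow{\ \sim\ }\Fun(\Delta^1/\partial\Delta^1,\mathcal{E})$ over $\mathcal{E}$. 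Writing $\Delta^1/\partial\Delta^1=\Delta^1\sqcup_{\partial\Delta^1}\Delta^0$ and using that $\Fun(-,\mathcal{E})$ turns this pushout into a pullback, we get $\Fun(\Delta^1/\partial\Delta^1,\mathcal{E})=\Fun(\Delta^1,\mathcal{E})\times_{\mathcal{E}\times\mathcal{E}}\mathcal{E}$, with $\Fun(\Delta^1,\mathcal{E})\to\mathcal{E}\times\mathcal{E}$ the source--target pair, $\mathcal{E}\to\mathcal{E}\times\mathcal{E}$ the diagonal, and $\ev$ corresponding to the projection onto the last factor. Taking the fibre over $e$ therefore gives $\Fun(\Delta^1,\mathcal{E})\times_{\mathcal{E}\times\mathcal{E}}\{(e,e)\}$, which is the standard model for $\Map_{\mathcal{E}}(e,e)$; since $\Fun(\Delta^1,\mathcal{E})\to\mathcal{E}\times\mathcal{E}$ is a categorical fibration, all the pullbacks occurring here are homotopy pullbacks, so the identification is valid up to equivalence. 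This completes the argument.

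The step that genuinely requires care is the identification of $B\N$ with the free $\infty$-category $\Delta^1/\partial\Delta^1$ on an endomorphism, together with the verification that the resulting equivalence $\Fun(B\N,\mathcal{E})\simeq\Fun(\Delta^1/\partial\Delta^1,\mathcal{E})$ sits over $\mathcal{E}$ compatibly with the two evaluation functors. Everything else is formal: the exponential law, the fact that $\Fun(-,\mathcal{E})$ sends pushouts of simplicial sets to pullbacks, and the fact that restriction along a monomorphism into an $\infty$-category is a categorical fibration, which is what lets one read off the relevant strict pullbacks as homotopy pullbacks.
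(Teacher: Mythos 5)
Your proof is correct and is essentially the paper's own argument in transposed form: both hinge on identifying $B\N$ with $\Delta^1\sqcup_{\partial\Delta^1}\Delta^0$ (the free $\infty$-category on one endomorphism) and then reading off $\Nat(F,F)$ via the cylinder model of the mapping space, i.e.\ the fiber of $\Fun(\Delta^1,\Fun(\D,\C))\to\Fun(\D,\C)\times\Fun(\D,\C)$ over $(F,F)$, the only difference being that you first apply the exponential law and work with the fiber of $\Fun(B\N,\Fun(\D,\C))\to\Fun(\D,\C)$, whereas the paper works directly with functors $\D\to\C^{\Delta^1}\times_{\C^{\partial\Delta^1}}\C^{\Delta^0}$ over $\C$. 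Your reading of $\Fun_F(\D,\Fun(B\N,\C))$ as the strict fiber of $\Fun(\D,p)$ over $F$ is exactly how the paper uses the lemma (e.g.\ in identifying the fiber of $p$ over $c$ with $\End(c)$ and in the proof of \cref{prop-nat-end}), so your identification is consistent with the intended statement.
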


\begin{proof}
Note that a natural transformation $F \Rightarrow F$ is a functor $\alpha: \D \to \C^{\Delta^1}$
	such that \[\{0\}^*(\alpha), \{1\}^*(\alpha): \D \to \C\] are equivalent to $F$. 
	Here, $\{0\}, \{1\}: \Delta^0 \to \Delta^1$ are the inclusion maps.
	
	By the universal property of the pullback, this means $\alpha$ is equivalently a functor $\alpha:\D\to \C^{\Delta^1}\times_{\C^{\partial \Delta^1}} \C^{\Delta^0}$ making the following diagram commute:
	\begin{center}
		\begin{tikzcd}
			\D  \arrow[dr, "F"'] \arrow[rr, "\alpha"] & &  \C^{\Delta^1} \times_{\C^{\partial \Delta^1}} \C^{\Delta^0} \arrow[dl, "\pi_2"] \\
			& \C. &
		\end{tikzcd}
	\end{center}
Note that we have the following pushout diagram of $\infty$-categories:
	\begin{center}
		\begin{tikzcd}
			\partial \Delta^1 \arrow[r, hookrightarrow] \arrow[d] & \Delta^1 \arrow[d] \\
			\Delta^0 \arrow[r] & B\mathbb{N} 
		\end{tikzcd}
	\end{center}
	which implies that $\C^{\Delta^1} \times_{\C^{\partial \Delta^1}} \C^{\Delta^0}\simeq \Fun(B\N,\C)$, and the conclusion follows.
\end{proof}

Note that the fiber of $p:\Fun(B\N,\C)\to \C$ over $c\in \C$ is the space of endomorphisms $\End(c)$.

\begin{prop} \label{prop-nat-end}
The evaluation map
	\[\ev_{\bS}:\Nat(\id_{\Grp(\S)},\id_{\Grp(\S)}) \to \End(\bS)\]
is an equivalence of spaces. Here $\End(\bS)=\Map_{\Grp(\S)}(\bS,\bS)$.
\end{prop}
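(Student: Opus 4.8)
The plan is to combine \cref{lemma-natff} with \cref{prop-freely-generated}. By \cref{lemma-natff}, writing $\E\coloneqq\Fun(B\N,\Grp(\S))$ and $p\colon\E\to\Grp(\S)$ for the restriction along $\ast\to B\N$, the space $\Nat(\id_{\Grp(\S)},\id_{\Grp(\S)})$ is the maximal subspace $\Fun_{\id_{\Grp(\S)}}(\Grp(\S),\E)^{\sim}$ of those functors $\alpha\colon\Grp(\S)\to\E$ with $p_*\alpha\simeq\id_{\Grp(\S)}$; unwinding the proof of that lemma, $\ev_{\bS}$ corresponds to sending such an $\alpha$ to $\alpha(\bS)$, which lands in the fiber $p^{-1}(\bS)=\End(\bS)$ (cf.\ the remark after \cref{lemma-natff}). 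So it suffices to show that evaluation at $\bS$ identifies $\Fun_{\id_{\Grp(\S)}}(\Grp(\S),\E)^{\sim}$ with $p^{-1}(\bS)$. I would also record that $\E\in\prladd$: it is presentable (functors from a small simplicial set into a presentable $\infty$-category), it is semiadditive since a zero object and finite (co)products in $\E$ are computed objectwise from $\Grp(\S)$ and there coincide, and it is additive since the shear map of an object of $\E$ is computed objectwise and hence is an equivalence because $\Grp(\S)$ is additive.

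The key step is that every $\alpha\colon\Grp(\S)\to\E$ with $p\circ\alpha\simeq\id_{\Grp(\S)}$ automatically preserves small colimits. Indeed $p$ is conservative (a map of functors $B\N\to\Grp(\S)$ is invertible iff it is invertible on the unique object) and preserves small colimits (these are computed objectwise in $\E$). Given a small diagram in $\Grp(\S)$ together with its colimit cocone, form the colimit in $\E$ of the image of the diagram under $\alpha$; the canonical comparison map from that colimit to $\alpha$ applied to the colimit cocone becomes an equivalence after applying $p$, using $p\alpha\simeq\id$ and that $p$ preserves this colimit, so it is already an equivalence by conservativity. Hence $\alpha\in\FunL(\Grp(\S),\E)$, and therefore $\Fun_{\id_{\Grp(\S)}}(\Grp(\S),\E)^{\sim}$ is exactly the subspace of $\FunL(\Grp(\S),\E)^{\sim}$ lying over $\id_{\Grp(\S)}\in\FunL(\Grp(\S),\Grp(\S))^{\sim}$ along $p_*$.

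Finally I would invoke \cref{prop-freely-generated}: since $\bS$ is the monoidal unit of $\Grp(\S)=L\S$ for the smashing localization $L=\Grp(\S)\otimes-$, evaluation at $\bS$ yields equivalences of $\infty$-categories $\FunL(\Grp(\S),\E)\xrightarrow{\ \sim\ }\E$ and $\FunL(\Grp(\S),\Grp(\S))\xrightarrow{\ \sim\ }\Grp(\S)$ (using $\E\in\prladd$), the latter carrying $\id_{\Grp(\S)}$ to $\bS$, and these are compatible with $p_*$ and $p$ because both amount to evaluating at $\bS$. Passing to maximal subspaces and taking homotopy fibers over $\id_{\Grp(\S)}$ on the left and over $\bS$ on the right, the top equivalence restricts to an equivalence from $\Fun_{\id_{\Grp(\S)}}(\Grp(\S),\E)^{\sim}\simeq\Nat(\id_{\Grp(\S)},\id_{\Grp(\S)})$ onto $p^{-1}(\bS)=\End(\bS)$; tracing through the identifications, this restricted map is precisely $\ev_{\bS}$. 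The only point requiring genuine care is the automatic colimit-preservation in the middle step (a priori $\Nat(\id,\id)$ sees all natural transformations, not just colimit-compatible ones); the remainder is bookkeeping with \cref{lemma-natff} and \cref{prop-freely-generated}.
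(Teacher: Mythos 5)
Your proposal is correct and follows essentially the same route as the paper: identify $\Nat(\id,\id)$ via \cref{lemma-natff} with functors over the identity into $\Fun(B\N,\Grp(\S))$, observe (as the paper phrases it, because $p$ creates colimits) that such functors automatically preserve colimits, and then apply \cref{prop-freely-generated} to both rows of the evaluation square and pass to fibers over $\id_{\Grp(\S)}$ and $\bS$. Your explicit verifications that $\Fun(B\N,\Grp(\S))$ is presentable additive and that $p$ is conservative and colimit-preserving merely spell out steps the paper cites or asserts more briefly.
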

\begin{proof} 
Note that 
\[\Fun_\id(\Grp(\S),\Fun(B\N,\Grp(\S)))^\sim = \Fun^L_\id(\Grp(\S), \Fun(B\N,\Grp(\S)))^\sim,\] 
since as $p$ creates colimits, every functor in the left hand side preserves colimits.
Combining this with \cref{lemma-natff}, we obtain an equivalence 
\[\Nat(\id,\id) \simeq \Fun^L_\id(\Grp(\S), \Fun(B\N,\Grp(\S)))^\sim.\]
	
	Now, we have a commutative diagram 
	\begin{center}
		\begin{tikzcd}
			\Fun^L(\Grp(\S),\Fun({B\mathbb{N}},\Grp(\S))) \arrow[r, "\sim", "\ev_\bS"'] \arrow[d, "p_*"] & \Fun({B\mathbb{N}},\Grp(\S)) \arrow[d, "p"] \\
			\Fun^L(\Grp(\S),\Grp(\S)) \arrow[r, "\sim", "\ev_\bS"'] &  \Grp(\S).
		\end{tikzcd}
	\end{center}
By \cref{prop-freely-generated}, the horizontal maps are equivalences: note that $\Fun(B\N, \Grp(\S))$ is semiadditive since $\Grp(\S)$ is \cite[Example 2.2]{ggn}. We consider $\id_{\Grp(\S)}$ in the bottom left corner, which maps to $\bS\in \Grp(\S)$ by $\ev_\bS$. Taking fibers vertically and taking maximal subspaces, we conclude that 
	$$\ev_\bS: \Nat(\id,\id) \to \End(\bS)$$
	is an equivalence.
\end{proof}

\begin{cor} \label{cor-natcol-omega}Let $P$ be an $E_\infty$-group. There is a homotopy equivalence
\[\NatCol_P(\Grp(\S))\simeq \Omega^\infty \bS.\]
In particular,
\[\pi_0(\NatCol_P(\Grp(\S))) \cong \mathbb{Z},\]
and thus every natural collection of endomorphisms over $P$ is equivalent to $\bP_n$ for some $n\in \Z$.
\begin{proof}
By \cref{prop-nat-natcol} and \cref{prop-nat-end}, we have
\[\NatCol_P(\Grp(\S))\simeq \End(\bS).\]
Now note that
\[\End(\bS)=\Map_{\Grp(\S)}(\bS,\bS)\simeq \Map_{\Grp(\S)}(\Sigma^\infty_+(*),\bS) \simeq \Map_\S(*,\Omega^\infty \bS) \simeq \Omega^\infty \bS.\]

By \cref{rmk-pn}, the $\bP_n$, $n\in \Z$ are pairwise non-equivalent natural collections of endomorphisms over $P$, so this proves the last statement.
\end{proof}
\end{cor}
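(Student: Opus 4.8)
The plan is to chain together the two structural results established just above. First I would note that $\Grp(\S)$ is a semiadditive presentable $\infty$-category (indeed additive), so \cref{prop-nat-natcol} applies: for every $E_\infty$-group $P$, the retraction maps $\pi^*$ and $Z^*$ of~(\ref{eq-natnatcol}) are inverse homotopy equivalences, so
\[\NatCol_P(\Grp(\S)) \simeq \Nat(\id_{\Grp(\S)}, \id_{\Grp(\S)}).\]
Then \cref{prop-nat-end} identifies the right-hand side with $\End(\bS) = \Map_{\Grp(\S)}(\bS, \bS)$ via evaluation at the monoidal unit $\bS$ of $\Grp(\S)$.

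Next I would compute $\End(\bS)$ directly. The unit $\bS$ is the free $E_\infty$-group on a point, i.e. $\Sigma^\infty_+(*)$ for the left adjoint $\Sigma^\infty_+ : \S \to \Grp(\S)$ of~(\ref{five}), whose right adjoint is the forgetful functor $\Grp(\S) \simeq \Spc \to \S$, namely $\Omega^\infty$. Hence
\[\End(\bS) = \Map_{\Grp(\S)}(\Sigma^\infty_+(*), \bS) \simeq \Map_\S(*, \Omega^\infty\bS) \simeq \Omega^\infty\bS,\]
which gives the asserted equivalence $\NatCol_P(\Grp(\S)) \simeq \Omega^\infty\bS$ and, on $\pi_0$, the isomorphism $\pi_0\NatCol_P(\Grp(\S)) \cong \pi_0\bS \cong \Z$.

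It then remains to see that the $\bP_n$ of \cref{rmk-pn} realize all of these path-components. I would trace $\bP_n$ through the chain of equivalences: under $\ev_\bS$ it goes to its component $(\bP_n)_\bS : \bS \to \bS$, which by definition is an $n$-fold diagonal followed by the fold map (precomposed with the inverse when $n < 0$), i.e. the class $n \in \pi_0\bS \cong [\bS,\bS]$ — equivalently $n\cdot\id_\bS$ for the abelian-group structure on $\Grp(\S) \simeq \Spc$. So $n \mapsto [\bP_n]$ corresponds, under the isomorphisms above, to the identity of $\Z$, hence is surjective; together with the pairwise non-equivalence from \cref{rmk-pn} this shows every natural collection of endomorphisms over $P$ is equivalent to exactly one $\bP_n$.

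I do not expect a real obstacle, since the substance is all in \cref{prop-nat-natcol} and \cref{prop-nat-end}; the one point needing care is this last counting step, since a $\Z$-indexed pairwise-distinct family need not exhaust a $\Z$-element set, so one must genuinely identify the image of $\bP_n$ with $n$ (or use the additive structure to write $\bP_n = n\cdot\bP_1 = n\cdot\id$) and not merely cite distinctness.
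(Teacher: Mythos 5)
Your proof is correct and follows essentially the same route as the paper: semiadditivity of $\Grp(\S)$ feeds \cref{prop-nat-natcol}, then \cref{prop-nat-end} reduces everything to $\End(\bS)$, which the free--forgetful adjunction identifies with $\Omega^\infty\bS$. Your treatment of the last step is in fact more careful than the paper's, which justifies the claim that the $\bP_n$ exhaust all components only by citing their pairwise non-equivalence (insufficient on its own, since a $\Z$-indexed pairwise-distinct family need not exhaust a countably infinite $\pi_0$); tracing $\bP_n$ through the equivalences to the class $n\in\pi_0(\bS)$ (equivalently, writing $\bP_n \simeq n\cdot\id_{\bS}$ using the additive structure) is exactly the right way to make that step complete.
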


 Let us now use the above to prove that the Thom isomorphism is generally not given by $S^0$-base change.
 
 \begin{defn}
  We say a map of $E_\infty$-groups $f: G \to P$ is {\it torsion} if there exists a  natural number $n>0$ such that 
  $f \circ \mathbb{P}_n: G \to P$ is trivial. Otherwise, we say it is {\it non-torsion}.
 \end{defn}

\begin{rmk}
Let $f:G\to H$ be a map of $E_\infty$-groups. If there exists a $k\geq 0$ such that 
   the map of groups 
   $$\pi_k(f) : \pi_k(G) \to \pi_k(H)$$
   is non-torsion, then $f$ is non-torsion.
\end{rmk}

 \begin{ex}\begin{enumerate}
   \item A map of groups $f: G \to H$ is non-torsion if and only if it is non-torsion in the classical group-theoretical sense.
   \item By the previous remark, if a map of groups $f:G \to H$ is non-torsion then the induced map 
   $$Bf: BG \to BH$$
   is non-torsion as well.
  \end{enumerate}
 \end{ex}

\begin{prop} \label{prop-tor}
 Let %
 $f: G \to P$ be a map of $E_\infty$-groups that is non-torsion.
 Let $H$ be a natural collection of endomorphisms over $P$ such that the following diagram commutes, where $R$ denotes the unit of $P$.
 \[\xymatrix@R-1.5pc{G \ar[rd]^-{\{R\}} \ar[dd]_-{H(f)} \\ & P \\ G \ar[ru]_-f }\]
 Then $H(f) \simeq \{e\}: G \to G$, where $e$ denotes the unit of $G$.
\end{prop}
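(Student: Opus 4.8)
\section*{Proof proposal}

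The plan is to reduce everything to the classification of natural collections of endomorphisms obtained in \cref{cor-natcol-omega}. By that corollary, $H$ is equivalent, as a natural collection of endomorphisms over $P$, to $\bP_n$ for a unique $n\in\Z$; unwinding the fact that this natural collection is the whiskering of the natural transformation $\bP_n:\id_{\Grp(\S)}\Rightarrow\id_{\Grp(\S)}$ with $\pi:\Grp(\S)_{/P}\to\Grp(\S)$, its component at the object $f:G\to P$ is just the map $\bP_n:G\to G$. Hence $H(f)\simeq \bP_n:G\to G$, and since $\bP_0$ is by definition the composite $G\xrightarrow{\Delta}G^{0}\xrightarrow{\mu}G$ through the zero object $G^{0}=\ast$ of $\Grp(\S)$ — so that $\bP_0\simeq\{e\}$ — it suffices to show that $n=0$.

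To see this, I would rule out the two remaining cases using that $f$ is non-torsion. The commuting triangle in the statement says precisely that $f\circ H(f)\simeq\{R\}$, i.e.\ $f\circ H(f)$ is the trivial map (recall that $R$ is the unit of $P$ and the trivial group is a zero object of $\Grp(\S)$). If $n>0$, then $f\circ\bP_n\simeq f\circ H(f)$ is trivial, so $f$ is torsion, a contradiction. If $n<0$, write $\bP_n\simeq i\circ\bP_{-n}$ on $G$, as in the definition of $\bP_n$, where $i$ denotes inversion; by \cref{rmk-pn} inversion is a natural transformation $\id_{\Grp(\S)}\Rightarrow\id_{\Grp(\S)}$, so naturality along $f:G\to P$ gives $f\circ i_G\simeq i_P\circ f$. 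Therefore $\{R\}\simeq f\circ H(f)\simeq f\circ i_G\circ\bP_{-n}\simeq i_P\circ f\circ\bP_{-n}$; since $i_P$ is an equivalence and fixes the unit, so that $i_P\circ\{R\}\simeq\{R\}$, we conclude $f\circ\bP_{-n}\simeq\{R\}$ with $-n>0$, again contradicting that $f$ is non-torsion. Hence $n=0$ and $H(f)\simeq\bP_0\simeq\{e\}$.

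The substance of the argument is entirely contained in \cref{cor-natcol-omega} and the naturality statements of \cref{rmk-pn}, so I do not expect a genuine difficulty; the only points requiring care are bookkeeping ones — correctly identifying the component of the whiskered natural collection $\bP_n$ at $f$ with the endomorphism $\bP_n:G\to G$, and using that both $\{R\}$ and $\{e\}$ are the trivial maps in their respective mapping spaces because $R$ and $e$ are the units of the ambient $E_\infty$-groups.
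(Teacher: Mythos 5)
Your proof is correct and follows essentially the same route as the paper: identify $H(f)\simeq\bP_n$ via \cref{cor-natcol-omega}, then use the commuting triangle and non-torsionness of $f$ to force $n=0$. The only difference is that you explicitly rule out $n<0$ using naturality of inversion (since the paper's definition of torsion only quantifies over $n>0$), a detail the paper's one-line proof passes over silently; this is a welcome bit of extra care, not a different argument.
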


\begin{proof}
 By \cref{cor-natcol-omega}, $H(f) \simeq \mathbb{P}_n$, which implies that $f \circ \mathbb{P}_n \simeq f \circ H(f) \simeq \{ R \}$, which 
 is only possible if $n = 0$ since $f$ is non-torsion, giving us the desired result.
\end{proof}

We will now apply \cref{prop-tor} to the Thom isomorphism theorem. We first need a definition.

\begin{defn} \label{def-solid}Let $R$ be an $E_\infty$-ring spectrum. An $E_\infty$-$R$-algebra $A$ is \emph{solid} if the multiplication map $A \sma_R A \to A$ is an equivalence.
\end{defn}

Since multiplying by the unit is equivalent to the identity, by the 2-out-of-3 property for equivalences we conclude that $A$ is solid if and only if any coproduct inclusion $A\to A\sma_R A$ is an equivalence.

\begin{thm} \label{thm-thomisnots0}
 Let $R$ be an $E_\infty$-ring spectrum. %
 Let $H$ %
 be a natural collection of endomorphisms over $\Pic(R)$. Suppose that for all $f:G\to \Pic(R)$, we have
 that $f \circ H(f) \simeq \{ R \}$, and additionally the induced map $M(H(f)): R[G] \to M(f)$ satisfies $S^0$-base change.
 Then, for any non-torsion $f: G \to \Pic(R)$, $Mf$ is a solid $E_\infty$-$R$-algebra.
\end{thm}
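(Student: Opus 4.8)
The plan is to reduce everything to the identification of the single map $M(H(f))\colon R[G]\to Mf$ and then read off solidity of $Mf$ from the $S^0$-base change hypothesis. First I would fix a non-torsion $f\colon G\to\Pic(R)$ and invoke \cref{prop-tor} with $P=\Pic(R)$, whose unit is the $R$-module $R$: the standing hypothesis $f\circ H(f)\simeq\{R\}$ is precisely what \cref{prop-tor} demands, so it yields $H(f)\simeq\{e\}\colon G\to G$, the constant map at the unit $e$ of $G$.

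Next I would identify $M(H(f))$. This map is $M$ applied to $H(f)$, regarded — via $f\circ H(f)\simeq\{R\}$ — as a morphism in $\Mon(\S)_{/\Pic(R)}$ from the trivially oriented $G$, whose Thom $E_\infty$-$R$-algebra is $R[G]$ by \cref{ex-trivialmult}, to $(f\colon G\to\Pic(R))$. Since $H(f)\simeq\{e\}$ factors through the zero object of $\Mon(\S)$, carrying the orientation $\{R\}\colon *\to\Pic(R)$ — which is the monoidal unit of $\Mon(\S)_{/\Pic(R)}$ — and since $M$ is symmetric monoidal with $M(\{R\})\simeq R$ the initial object of $\CAlg_R$, I would conclude that $M(H(f))$ is the composition $R[G]\xrightarrow{\ \epsilon\ }R\xrightarrow{\ u\ }Mf$ of the augmentation of $R[G]$ with the unit of $Mf$. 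I expect this identification to be the one genuinely delicate point: it amounts to tracking the over-category structure through the symmetric monoidal functor $M$, but it requires no hard input beyond functoriality of $M$, \cref{ex-trivialmult}, and initiality of $R$ in $\CAlg_R$.

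Finally I would unwind the $S^0$-base change hypothesis. Since coproducts in $\CAlg_R$ are computed by $\sma_R$, the assertion that $M(H(f))$ satisfies $S^0$-base change means that $M(H(f))\sma_R\id_{Mf}\colon R[G]\sma_R Mf\to Mf\sma_R Mf$ is an equivalence; by the factorization of the previous step and functoriality of $-\sma_R Mf$, this map is equivalent to $\iota\circ(\epsilon\sma_R\id_{Mf})$, where $\iota\coloneqq u\sma_R\id_{Mf}\colon Mf\simeq R\sma_R Mf\to Mf\sma_R Mf$ is a coproduct inclusion. Hence $\iota$ admits a right inverse. But $\iota$ also admits a left inverse, namely the multiplication $\mu\colon Mf\sma_R Mf\to Mf$, since $\mu\circ\iota\simeq\id_{Mf}$ because multiplying by the unit is equivalent to the identity; a morphism possessing both a left and a right inverse is an equivalence, so $\mu$ is an equivalence, i.e. $Mf$ is a solid $E_\infty$-$R$-algebra (cf. the remark following \cref{def-solid}).
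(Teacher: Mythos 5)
Your argument is correct and reaches the conclusion by the same overall strategy as the paper --- apply \cref{prop-tor} to trivialize $H(f)$, then use the $S^0$-base change equivalence to split the coproduct inclusion $Mf\to Mf\sma_R Mf$ --- but the middle step is organized differently. The paper never identifies $M(H(f))$ itself: it stays upstairs in $\Mon(\S)_{/\Pic(R)}$, factors $\{e\}\times\id_G$ as $\iota_2\circ\pi_2$, and uses the monoidality identifications $R[G]\sma_R Mf\simeq M(f\circ\pi_2)$ and $Mf\sma_R Mf\simeq M(f\circ\mu)$ from (\ref{eq-thommonoidal}) to Thomify that factorization, extracting a two-sided inverse for $M(\iota_2)$ from the inverse of $M(\{e\}\times\id)$. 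You instead factor downstairs: $M(H(f))\simeq u\circ\epsilon$ through the initial object $R$ of $\CAlg_R$, so that smashing with $Mf$ immediately exhibits the base-change equivalence as the composite of $\epsilon\sma_R\id$ with the coproduct inclusion, and the multiplication of $Mf$ supplies the left inverse. Your route is leaner --- no bookkeeping over $G\times G$ is needed --- and it concentrates all the delicacy in the point you flag: one must check that the over-category morphism $(G,\{R\})\to(G,f)$ underlying $H(f)$, together with its homotopy $f\circ H(f)\simeq\{R\}$, factors through $(\ast,\{R\})$, which is initial in $\Mon(\S)_{/\Pic(R)}$; this is the same coherence issue the paper implicitly handles when it replaces $M(H(f))\sma_R\id$ by $M(\{e\}\times\id)$, so you are not assuming more than the published proof does. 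Note also that for your endgame the first leg of the factorization need not be the standard augmentation: any factorization of $M(H(f))$ through $R$ suffices, since the second leg is forced to be the unit by initiality, and an equivalence of the form $\iota\circ g$ together with the retraction $\mu\circ\iota\simeq\id$ already makes $\iota$ an equivalence.
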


\begin{proof}
 
 Let $f: G \to \Pic(R)$ be a non-torsion map. By the previous proposition, the map $H(f): G \to G$ is trivial. 

 By the results of \cref{section-thom}, we have equivalences
 $$Mf \sma_R Mf \simeq M(G \times G \xrightarrow{ f \times f} \Pic(R) \times \Pic(R) \xrightarrow{ \ \mu \ } \Pic(R)) \simeq 
 M(G \times G \xrightarrow{ \ \mu \ } G \xrightarrow{ \ f \ } \Pic(R)),$$
 where the last step follows from $f$ being a map of $E_\infty$-monoids. 
 Similarly, we have equivalences:
 \begin{align*}
&\hspace{2cm} R[G] \sma_R Mf \simeq M(G \xrightarrow{ \ \{ R \} \ } \Pic(R)) \sma_R M(G \xrightarrow{ \ f \ } \Pic(R)) \simeq \\
&\simeq M(G \times G \xrightarrow{ \ \{R\} \times f \ } \Pic(R) \times \Pic(R) \xrightarrow{ \ \mu \ } \Pic(R)) \simeq M(G \times G \xrightarrow{ \ \pi_2 \ } G \xrightarrow{ \ f \ } \Pic(R)).\end{align*}
Since $H(f) \simeq \{ e \}$ and $M(H(f))$ satisfies $S^0$-base change, we get the following commutative diagram: 
 \begin{center}
  \begin{tikzcd}[row sep=0.3in, column sep=0.5in]
    R[G] \sma_R Mf \arrow[d, dash, "\simeq"] \arrow[r, "M(\{e\}) \sma_R \id","\sim"']& Mf \sma_R Mf \arrow[d, dash, "\simeq"] \\
    M(G \times G \xrightarrow{ \ \pi_2 \ } G \xrightarrow{ \ f \ } \Pic(R)) \arrow[r, "M(\{e\} \times \id)"] & 
    M(G \times G \xrightarrow{ \ \mu \ } G \xrightarrow{ \ f \ } \Pic (R)). 
  \end{tikzcd}
 \end{center}
The map $\{e\} \times \id: G \times G \to G \times G$ factors as $\{e\} \times \id = \iota_2\pi_2$.
Thus, we get the following commutative diagram of $E_\infty$-monoids over $\Pic(R)$.

 \begin{center}
  \begin{tikzcd}[row sep=0.5in, column sep=0.5in]
  G \arrow[rr, "\iota_2"] \arrow[drrr, "\id_G"'] \arrow[rrrr, "\id_G", bend left = 20]& &G \times G \arrow[dr, "\pi_2"'] \arrow[rr, "\pi_2"] \arrow[rrrr, "\{e\} \times \id_G"', near start, bend right = 10] & &G \arrow[dl, "\id_G"] \arrow[rr, "\iota_2"] & &G \times G \arrow[dlll, "\mu"] \\
   & & &G \arrow[d, "f"] & & & \\
   & & & \Pic(R) & & &
  \end{tikzcd}
 \end{center}
 Applying the Thom construction to this diagram, we get
  \begin{center}
   \begin{tikzcd}[row sep=0.5in, column sep=0.9in]
    Mf \arrow[r, "M(\iota_2)"] \arrow[rr, "\id_{Mf}", bend left = 20]& R[G] \sma_R Mf \arrow[r, "M(\pi_2)"] \arrow[rr, "\simeq", "M(\{e\}\times \id)"', bend right = 20] & Mf \arrow[r, "M(\iota_2)"] & Mf \sma_R Mf
   \end{tikzcd}
 \end{center}
 As $M(\{e\} \times \id)$ is an equivalence, there exists an equivalence $K: Mf \sma_R Mf \to  R[G] \sma_R Mf$ such that $M(\{e\} \times \id) \circ K \simeq \id_{Mf \sma_R Mf}$, which gives us the following commutative diagram: 
\begin{center}
   \begin{tikzcd}[row sep=0.5in, column sep=0.9in]
    Mf \arrow[r, "M(\iota_2)"] \arrow[rr, "\id_{Mf}", bend left = 20]& Mf \sma_R Mf \arrow[r, "M(\pi_2)K"] \arrow[rr, "\id_{Mf \sma_R Mf}"', bend right = 20] & Mf \arrow[r, "M(\iota_2)"] & Mf \sma_R Mf
   \end{tikzcd}
 \end{center}
whence we conclude that $M(\iota_2): Mf \to Mf \sma_R Mf$ is an equivalence. 
\end{proof}

\begin{ex} \label{ex-picmup}
 Recall that $MUP = M(BU \times \mathbb{Z} \to \Pic(\mathbb{S}))$. The map of $E_\infty$-groups $BU \times \mathbb{Z} \to \Pic(\mathbb{S})$ 
 is non-torsion as the induced map $\mathbb{Z} = \pi_0(BU \times \mathbb{Z}) \to \pi_0(\Pic(\mathbb{S})) = \mathbb{Z}$ is just 
 the identity map. 
 \par 
 In order to prove that the Thom isomorphism $MUP \sma MUP \simeq MUP \sma \bS[BU\times \Z]$ is not given by $S^0$-base change for a map $M(H(f))$ as in \cref{thm-thomisnots0}, 
 it suffices to prove that $MUP$ is not a solid $E_\infty$-ring spectrum. If $MUP$ were solid, then its rational homology $H_*(MUP,\mathbb{Q})$ would be a solid $\Q$-algebra, as 
 rational homology satisfies the K{\"u}nneth formula. This means that the inclusion map of $\mathbb{Q}$-algebras
 $$H_*(MUP,\mathbb{Q}) \xrightarrow{ \ \iota_1 \ } H_*(MUP,\mathbb{Q}) \otimes_{\mathbb{Q}} H_*(MUP,\mathbb{Q})$$
 is an isomorphism, which is only possible if the dimension of $H_*(MUP,\mathbb{Q})$  is 0 or 1. Indeed, if $\beta = \{ x_i \}$ is a basis for $H_*(MUP,\mathbb{Q})$, 
 then $\beta \otimes \beta = \{x_i \otimes x_j \}_{i,j}$ is a basis for $H_*(MUP,\mathbb{Q}) \otimes_{\mathbb{Q}} H_*(MUP,\mathbb{Q})$ 
 and an isomorphism necessitates a bijection $\beta \xrightarrow{ \ \cong \ } \beta \otimes \beta$, $x_i \mapsto x_i \otimes 1$, 
 which implies $|\beta|=0,1$. However, this clearly does not hold as $H_*(MUP,\mathbb{Q}) = \pi_*(MUP) \otimes \mathbb{Q}$ is a polynomial algebra on infinitely many generators.
\end{ex}

\section{Base change for maps of \texorpdfstring{$E_\infty$}{E\textunderscore infty}-ring spectra} \label{section-base-rings}
Following up on \cref{section-basechange}, where we introduced the notion of $X$-base change, we now specialize to the case where $\C=\CAlg(\Sp)$ (similarly, we could take $\CAlg(\Mod_R)$ for $R$ an $E_\infty$-ring spectrum), in which case for $f:A\to B$ the map (\ref{eq-xbase}) becomes
\[(X \otimes A) \sma_A B \to X\otimes B.\]
We will now consider conditions on $f$ which guarantee that the map displayed above is an equivalence, i.e. that $f$ satisfies $X$-base change, for some class of pointed spaces $X$. Note, for example, that it is unlikely to hold for non-connected $X$: a map $A\to B$ satisfies $S^0$-base change if and only if $A\sma B \to B\sma B$ is an equivalence (a concrete counterexample is given in \cref{ex-s0}). We first look at the case where $f$ is an étale map. Then we look at the case where $B$ is a solid $E_\infty$-$A$-algebra.
As a particular case, we look at inversion of a homotopy element. Finally, we consider Galois extensions.

\subsection{Étale maps}

Following \cite[7.5.0.1/2]{ha}, a map of (ordinary) commutative rings $A\to B$ is \emph{étale} if $B$ is finitely presented as an $A$-algebra, $B$ is flat as an $A$-module, and there exists an idempotent element $e\in B\otimes_A B$ such that the multiplication map $B\otimes_A B\to B$ induces an isomorphism $(B\otimes_A B)[e^{-1}] \cong B$.
For the following definition, we follow the terminology of \cite[7.5.0.4]{ha} for étale maps, and that of \cite{mccarthy-minasian} for $THH$-étale and $TAQ$-étale maps. Note that in \cite{rognes-galois}, the latter two are called ``formally symmetrically étale'' and ``formally étale'' maps, respectively.

\begin{defn} Let $f:A\to B$ be a map of $E_\infty$-ring spectra. We say it is:
\begin{enumerate}
\item \emph{étale} if $\pi_0(A)\to \pi_0(B)$ is étale and $B$ is flat as an $A$-module, i.e. the natural map \[\pi_*(A)\otimes_{\pi_0(A)} \pi_0(B) \to \pi_*(B)\] is an isomorphism,
\item \emph{$THH$-étale} if the natural map $B\to THH^A(B)$ is an equivalence,
\item \emph{$TAQ$-étale} if the $E_\infty$-cotangent complex $L_{B/A}$ is contractible.
\end{enumerate}
\end{defn}
The natural map $A\to THH^A(B)=S^1\otimes_A B$ (where $\otimes_A$ denotes the tensor of $\CAlg_A$ over spaces) comes from the inclusion of a basepoint into $S^1$. As noted in \cite[Section 5]{mathew-thh}, this map is an equivalence if and only if $A\to B$ is a \emph{0-cotruncated} map in $\CAlg(\Sp)$, meaning that the map of spaces $f^*:\Map_{\CAlg(\Sp)}(B,C)\to\Map_{\CAlg(\Sp)}(A,C)$ is a covering space for all $C\in \CAlg(\Sp)$, i.e. it has discrete homotopy fibers over any basepoint. This is easily proven using the tensor -- internal hom adjunction of $\CAlg(\Sp)$.
The cotangent complex in the $\infty$-categorical context is defined in \cite[7.3]{ha}.

Any étale map is $THH$-étale \cite[7.5.4.6]{ha} and any $THH$-étale map is $TAQ$-étale \cite[9.4.4]{rognes-galois}. Note that if $A$ and $B$ are connective and satisfy some finiteness hypothesis (such as $B$ being finitely presented as an $A$-algebra), then a $TAQ$-étale map is étale \cite[8.9]{dagvii} and thus in this case all three étaleness conditions are equivalent.

Mathew has proven that if $A\to B$ is étale, then it satisfies $S^1$-base change \cite[1.3]{mathew-thh}. In light of \cref{thm-s1enough}, we get:

\begin{cor} \label{thm-etale-change} Let $X$ be a connected pointed space. Let $f:A\to B$ be an étale map of $E_\infty$-ring spectra. Then $f$ satisfies $X$-base change.
\end{cor}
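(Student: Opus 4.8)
The plan is to obtain this as an immediate consequence of two results already established: Mathew's theorem that an étale map $f\colon A\to B$ of $E_\infty$-ring spectra satisfies $S^1$-base change \cite[1.3]{mathew-thh}, and \cref{thm-s1enough}, which promotes $S^n$-base change to $X$-base change for all $(n-1)$-connected pointed spaces $X$. So the proof is essentially a matter of invoking the latter with $n=1$.

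First I would recall that $\C=\CAlg(\Sp)$ (or, if one prefers, $\CAlg(\Mod_A)$, as discussed at the start of \cref{section-base-rings}) is a presentable $\infty$-category, so that \cref{def-xbase} and hence \cref{thm-s1enough} apply to $f$ viewed as a map therein; under this identification the condition that $f$ satisfies $X$-base change is precisely that the canonical map $(X\otimes A)\sma_A B\to X\otimes B$ is an equivalence. By Mathew's theorem, $f$ satisfies $S^1$-base change. Now $S^1$ is the sphere $S^n$ for $n=1$, and a pointed space is connected precisely when it is $0$-connected, i.e. $(n-1)$-connected for $n=1$. Applying \cref{thm-s1enough} to $f$ therefore yields that $f$ satisfies $X$-base change for every connected pointed space $X$, which is the claim.

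There is no real obstacle here: all the substance is contained in \cref{thm-s1enough}, which in turn rests only on \cref{lemma-basechange} and a skeletal induction, together with the input of Mathew's result. The only points requiring care are bookkeeping conventions — that ``connected'' is used synonymously with ``$0$-connected'', and that the tensor appearing in the definition of $X$-base change for $E_\infty$-$A$-algebras is the tensor over spaces to which \cref{thm-s1enough} refers, which is exactly the content of the discussion opening \cref{section-base-rings}.
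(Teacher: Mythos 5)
Your proposal is correct and is exactly the paper's argument: the corollary is deduced by combining Mathew's theorem that étale maps satisfy $S^1$-base change with \cref{thm-s1enough} applied with $n=1$, since connected means $0$-connected. Nothing further is needed.
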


Note that in \cite[5.2]{mathew-thh} Mathew also proved that if $A\to B$ is $THH$-étale, then it satisfies $X$-base change for all simply connected pointed spaces $X$. He stated it for faithful Galois extensions (see \cref{section-galois}), but his proof only uses $THH$-étaleness, which is satisfied for Galois extensions \cite[9.2.6]{rognes-galois}.

\subsection{Solidity and inversion of a homotopy element}
Let $R$ be an $E_\infty$-ring spectrum and $x\in \pi_0(R)$. Lurie proves \cite[7.5.0.6/7]{ha} that there exists an $E_\infty$-ring spectrum $R[x^{-1}]$ with an étale map of $E_\infty$-ring spectra $R\to R[x^{-1}]$ which realizes the algebraic localization morphism $\pi_0(R)\to \pi_0(R)[x^{-1}]$.

There are interesting instances where we want to invert a homotopy element $x$ that is not in degree $0$, as we shall see below. In \cite[4.3.17]{lurie-chromatic2}, Lurie constructs $R[x^{-1}]$ for $x \in \pi_n(R)$, $n\in \Z$, and he gives the following universal property for the $E_\infty$-$R$-algebra $R[x^{-1}]$. Consider $x$ as a map $\Sigma^nR\to R$ in $\Mod_R$. For any $A\in \CAlg_R$, 
\[\Map_{\CAlg_R}(R[x^{-1}],A) \simeq \begin{cases}  \text{contractible} & \text{if } x \text{ induces an equivalence } A \sma_R \Sigma^n R \stackrel{\sim}{\to} A \sma_R R, \\ \emptyset & \text{otherwise.}
\end{cases}\]

Note that if $A\in \CAlg_R$, then the unit $R\to A$ allows us to consider $x$ as an element of $\pi_*(A)$, and thus to consider $A[x^{-1}]$.

\begin{lemma} \label{lemma-loc-smashing} Let $R$ be an $E_\infty$-ring spectrum, $x\in \pi_*(R)$ and $A\in \CAlg_R$. Then the canonical pushout map
\[A\sma_R R[x^{-1}] \to A[x^{-1}]\]
is an equivalence of $E_\infty$-$A$-algebras.
\begin{proof} Let $B\in \CAlg_A$. Note that $A\sma_R R[x^{-1}]$ is in $\CAlg_A$ as the extension of scalars via $R\to A$ of $R[x^{-1}]$, so $\Map_{\CAlg_A}(A\sma_R R[x^{-1}],B)$ is contractible if $B\sma_R \Sigma^n R\to B\sma_R R$ is an equivalence, and empty otherwise. The mapping space $\Map_{\CAlg_A}(A[x^{-1}],B)$ is contractible or empty under the same conditions, so the result follows.
\end{proof}
\end{lemma}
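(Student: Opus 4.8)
The plan is to identify both sides by the functors they corepresent on $\CAlg_A$ and conclude by Yoneda, using Lurie's universal property for $R[x^{-1}]$ recalled just above. First I would describe the comparison map explicitly: $x$ becomes invertible in $R[x^{-1}]$, hence in the $R$-algebra coproduct $A\sma_R R[x^{-1}]$; on the other hand $A[x^{-1}]$ is an $R$-algebra via $R\to A\to A[x^{-1}]$ in which $x$ is invertible, so the universal property of $R[x^{-1}]$ yields a map $R[x^{-1}]\to A[x^{-1}]$ of $R$-algebras, and together with $A\to A[x^{-1}]$ this assembles into the canonical $A$-algebra map $A\sma_R R[x^{-1}]\to A[x^{-1}]$ whose invertibility we want.

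Next, for an arbitrary $B\in\CAlg_A$ I would compute the two mapping spaces. Since $A\sma_R R[x^{-1}]$ is the extension of scalars of $R[x^{-1}]$ along $R\to A$, the extension of scalars / restriction of scalars adjunction gives $\Map_{\CAlg_A}(A\sma_R R[x^{-1}],B)\simeq\Map_{\CAlg_R}(R[x^{-1}],B)$, with $B$ regarded in $\CAlg_R$ along $R\to A\to B$; by the universal property this is contractible when $x\colon\Sigma^n R\to R$ induces an equivalence $B\sma_R\Sigma^n R\xrightarrow{\sim}B\sma_R R$ and empty otherwise. Applying the same universal property with $A$ in place of $R$, $\Map_{\CAlg_A}(A[x^{-1}],B)$ is contractible when $x\colon\Sigma^n A\to A$ induces an equivalence $B\sma_A\Sigma^n A\xrightarrow{\sim}B\sma_A A$ and empty otherwise.

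The crux is that these two conditions agree. Base changing the $R$-module map $x\colon\Sigma^n R\to R$ along $R\to A$ produces the $A$-module map $x\colon\Sigma^n A\to A$ representing the image of $x$ in $\pi_n(A)$; base changing once more along $A\to B$ gives, on both sides, the single map $B\sma_R\Sigma^n R\simeq B\sma_A\Sigma^n A\to B\sma_A A\simeq B\sma_R R$, which is an equivalence exactly when multiplication by $x$ is invertible on $\pi_*(B)$. Thus the two corepresented functors $\CAlg_A\to\S$ coincide, naturally in $B$, and this identification is the one induced by the comparison map above; Yoneda then gives that the comparison map is an equivalence of $E_\infty$-$A$-algebras. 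I do not anticipate a real obstacle; the only care needed is the bookkeeping of successive base changes and checking that the identification of mapping spaces is natural in $B$ and is indeed implemented by the canonical map, so that Yoneda applies to that map rather than merely to an abstract equivalence.
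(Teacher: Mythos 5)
Your proposal is correct and follows essentially the same route as the paper: both identify $\Map_{\CAlg_A}(A\sma_R R[x^{-1}],B)$ via the extension-of-scalars adjunction and Lurie's universal property, observe that it and $\Map_{\CAlg_A}(A[x^{-1}],B)$ are contractible or empty under the same condition on $B$, and conclude. Your extra care about the identification being implemented by the canonical map is fine but not really needed here, since a map between two spaces that are simultaneously contractible or simultaneously empty is automatically an equivalence.
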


Unfortunately, when $x$ is not in degree $0$ the map $R\to R[x^{-1}]$ may not be étale: indeed, it may not be flat. For example, if $R$ is connective then any flat $R$-module is necessarily connective, as follows from the definition \cite[7.2.2.11]{ha}.
To remedy this, we will use the notion of solidity which we introduced in \cref{def-solid}.

\begin{ex} \label{ex-invsolid} By \cref{lemma-loc-smashing}, if $R$ is an $E_\infty$-ring spectrum and $x\in \pi_*(R)$, then $R[x^{-1}]$ is a solid $E_\infty$-$R$-algebra, since $R[x^{-1}][x^{-1}]\simeq R[x^{-1}]$.
\end{ex}

\begin{thm} \label{prop-solid} Let $R$ be an $E_\infty$-ring spectrum and $A$ be a solid $E_\infty$-$R$-algebra. Then the unit $R\to A$ satisfies $X$-base change for any connected pointed space $X$.
\begin{proof}
By \cref{thm-s1enough}, it suffices to prove that $f$ satisfies $S^1$-base change, namely that the pushout map
\begin{equation} \label{eq-pushs1} (S^1\otimes R)\sma_R A\to S^1\otimes A\end{equation}
is an equivalence. Write $S^1$ as the pushout of $*\leftarrow S^0\to *$. Since $-\otimes R$ preserves pushouts, we recover the classical formula for topological Hochschild homology $S^1\otimes R\simeq R \sma_{R\sma R} R$. Using this equivalence for $R$ and for $A$ in (\ref{eq-pushs1}), we see that we want to prove that the pushout map
\[(R\sma_{R\sma R} R) \sma_R A \to A\sma_{A\sma A} A\]
is an equivalence. The left hand side simplifies to $R\sma_{R\sma R} A$. 

Consider the following commutative diagram:
\[\xymatrix{
R & R \ar[r] \ar[l] &R \\
R \ar[u] \ar[d] & \bS \ar[l] \ar[u]\ar[d] \ar[r] & R \ar[u] \ar[d] \\
A & A \ar[l] \ar[r] & A.
}\]
Comparing pushouts computed horizontally or vertically, we get $R\sma_{R\sma R} A\simeq A\sma_{R\sma A} A$. We are using \cite[5.5.2.3]{htt}.

Similarly, consider the following commutative diagram:
\[\xymatrix{
A & A \ar[r] \ar[l] & A \\
R \ar[u] \ar[d] & \bS \ar[r]\ar[l]\ar[d]\ar[u] & A \ar[u] \ar[d] \\
A & A \ar[l]\ar[r] & A.
}\]
Comparing pushouts computed horizontally or vertically, we get \[A \sma_{R\sma A} A\simeq (A\sma_R A) \sma_{A\sma A} A\simeq A \sma_{A\sma A} A\]
where the second equivalence comes from the solidity of $A$. This finishes the proof.
\end{proof}
\end{thm}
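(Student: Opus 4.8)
The plan is to reduce to the case $X=S^1$ and then carry out an explicit computation of the two topological Hochschild homologies involved. By \cref{thm-s1enough} applied with $n=1$ (so that ``$(n-1)$-connected'' means connected), it suffices to prove that the unit $f\colon R\to A$ satisfies $S^1$-base change, i.e.\ that the canonical pushout map
\[(S^1\otimes R)\sma_R A\longrightarrow S^1\otimes A\]
is an equivalence of $E_\infty$-ring spectra, where $\otimes$ is the tensor of $\CAlg(\Sp)$ over spaces.

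Next I would write $S^1$ as the pushout of $\ast\leftarrow S^0\to\ast$ in $\S$; since the tensor $\S\times\CAlg(\Sp)\to\CAlg(\Sp)$ preserves colimits separately in each variable, this yields the standard formulas $S^1\otimes R\simeq R\sma_{R\sma R}R$ and $S^1\otimes A\simeq A\sma_{A\sma A}A$, compatibly with the unit and structure maps, so that the map to be shown to be an equivalence becomes $(R\sma_{R\sma R}R)\sma_R A\to A\sma_{A\sma A}A$. The source then simplifies by repeated use of the fact that colimits commute with colimits \cite[5.5.2.3]{htt}, applied to a pair of $3\times 3$ diagrams of $E_\infty$-ring spectra whose entries lie among $R$, $\bS$, $A$ and whose maps are units, multiplications, or the fixed map $f$: one gets
\[(R\sma_{R\sma R}R)\sma_R A\simeq R\sma_{R\sma R}A\simeq A\sma_{R\sma A}A\simeq(A\sma_R A)\sma_{A\sma A}A.\]
Now solidity of $A$ enters as the single nontrivial input: $A\sma_R A\simeq A$, so the last term is $A\sma_{A\sma A}A\simeq S^1\otimes A$, and one verifies that the resulting composite equivalence is the canonical comparison map. (As a sanity check, the same input gives directly $S^1\otimes_R A=THH^R(A)=A\sma_{A\sma_R A}A\simeq A\sma_A A\simeq A$, which is the essential content.)

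The main obstacle is not any single identification but the bookkeeping required to see that the chain of abstract equivalences above actually computes the canonical map $(S^1\otimes R)\sma_R A\to S^1\otimes A$, and not merely that its source and target are abstractly equivalent; this forces one to keep track of the structure maps in each $3\times 3$ grid and to use naturality of the ``tensor preserves colimits'' equivalences. One must also be careful that all of the smash products and pushouts appearing here are formed in $\CAlg(\Sp)$, where $\sma$ is the coproduct, since the analogous tensors computed in $\CAlg_R$ behave quite differently.
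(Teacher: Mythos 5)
Your proposal is correct and follows essentially the same route as the paper: reduce to $S^1$-base change via \cref{thm-s1enough}, rewrite $S^1\otimes R$ and $S^1\otimes A$ as pushouts, and use two $3\times 3$ diagrams (colimits commute with colimits, \cite[5.5.2.3]{htt}) to identify $(R\sma_{R\sma R}R)\sma_R A$ with $(A\sma_R A)\sma_{A\sma A}A$, at which point solidity finishes the argument. The only difference is cosmetic: you flag the bookkeeping needed to see that the chain of equivalences computes the canonical comparison map, a point the paper treats implicitly via the naturality of the same identifications.
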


As a direct corollary of \cref{prop-solid}, \cref{ex-invsolid} and \cref{lemma-loc-smashing}, we obtain:
\begin{cor} \label{cor-tensor-localization} Let $X$ be a connected pointed space. Let $R$ be an $E_\infty$-ring spectrum and $x\in \pi_*(R)$. Then $R\to R[x^{-1}]$ satisfies $X$-base change, i.e. the canonical map
\[(X\otimes R) \sma_R R[x^{-1}]\to X\otimes R[x^{-1}]\]
is an equivalence, so we get an equivalence
\[(X\otimes R)[x^{-1}] \simeq X\otimes R[x^{-1}].\]
\end{cor}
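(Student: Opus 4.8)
The plan is to chain together the three results cited just above the statement; the corollary is essentially formal given them. First I would record that $R[x^{-1}]$ is a solid $E_\infty$-$R$-algebra. This is precisely \cref{ex-invsolid}: by \cref{lemma-loc-smashing} applied with $A=R[x^{-1}]$, the multiplication map $R[x^{-1}]\sma_R R[x^{-1}]\to R[x^{-1}][x^{-1}]\simeq R[x^{-1}]$ is an equivalence.

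Since $X$ is connected, \cref{prop-solid} applied to the solid $E_\infty$-$R$-algebra $R[x^{-1}]$ then tells us that the unit $R\to R[x^{-1}]$ satisfies $X$-base change. Unwinding \cref{def-xbase} in the case $\C=\CAlg(\Sp)$ --- where, as recalled at the beginning of this section, the pushout comparison map of (\ref{eq-xbase}) becomes $(X\otimes R)\sma_R R[x^{-1}]\to X\otimes R[x^{-1}]$ --- this says exactly that the canonical map $(X\otimes R)\sma_R R[x^{-1}]\to X\otimes R[x^{-1}]$ is an equivalence of $E_\infty$-$R[x^{-1}]$-algebras.

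It remains to identify the source of this map with $(X\otimes R)[x^{-1}]$. Here I would view $X\otimes R$ as an $E_\infty$-$R$-algebra via the unit $R\simeq \ast\otimes R\to X\otimes R$ coming from the basepoint of $X$ (the same structure already used in the definition of $X$-base change), so that $x\in\pi_\ast(R)$ determines via this unit an element of $\pi_\ast(X\otimes R)$. Then \cref{lemma-loc-smashing} with $A=X\otimes R$ yields an equivalence $(X\otimes R)\sma_R R[x^{-1}]\simeq (X\otimes R)[x^{-1}]$, and composing with the equivalence of the previous paragraph gives $(X\otimes R)[x^{-1}]\simeq X\otimes R[x^{-1}]$. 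There is no genuine obstacle; the only point that warrants a moment's care is bookkeeping of the $E_\infty$-$R[x^{-1}]$-algebra structures, namely checking that the structure on $(X\otimes R)\sma_R R[x^{-1}]$ used as the target of the base-change map agrees with the one used as the source of the \cref{lemma-loc-smashing} equivalence --- both arise from extension of scalars along $R\to R[x^{-1}]$ --- so that the two equivalences can indeed be spliced together.
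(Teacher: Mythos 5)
Your proposal is correct and is exactly the paper's argument: the corollary is stated there as a direct consequence of \cref{prop-solid}, \cref{ex-invsolid} and \cref{lemma-loc-smashing}, with \cref{lemma-loc-smashing} applied to $A=X\otimes R$ to identify $(X\otimes R)\sma_R R[x^{-1}]$ with $(X\otimes R)[x^{-1}]$, just as you do. Your extra remark on matching the $E_\infty$-$R[x^{-1}]$-algebra structures is harmless bookkeeping that the paper leaves implicit.
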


\begin{rmk} \label{rmk-loc-smash} Let $R$ and $T$ be $E_\infty$-ring spectra. Let $x\in \pi_*(R)$. Applying \cref{lemma-loc-smashing} to $A=R\sma T$ where $R\sma T$ is an $E_\infty$-$R$-algebra via the canonical map $R\to R\sma T$, we get an equivalence of $E_\infty$-$R$-algebras %
 $R[x^{-1}]\sma T \simeq (R\sma T)[x^{-1}]$.
\end{rmk}

Putting \cref{cor-tensor-localization}, \cref{thm-tensorthom}
and \cref{rmk-loc-smash} together, we deduce:

\begin{cor} Let $G$ be an $E_\infty$-group, $R$ be an $E_\infty$-ring spectrum and $f:G\to \Pic(R)$ be an $E_\infty$-map. Let $x\in \pi_*(Mf)$. Let $X$ be a connected pointed space. Then
\[X\otimes_R ((Mf)[x^{-1}]) \simeq (Mf)[x^{-1}] \sma \bS[X\odot G]\]
as $E_\infty$-$R$-algebras.
\end{cor}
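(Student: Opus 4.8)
The plan is to derive the equivalence formally, by stringing together the three results cited just above into a chain
\[
X\otimes_R\big((Mf)[x^{-1}]\big)\;\simeq\;(X\otimes_R Mf)[x^{-1}]\;\simeq\;\big(Mf\sma\bS[X\odot G]\big)[x^{-1}]\;\simeq\;(Mf)[x^{-1}]\sma\bS[X\odot G].
\]
The middle equivalence is \cref{thm-tensorthom}, after inverting $x$; to make sense of this I would invoke \cref{rmk-mfunit2}, so that $X\otimes_R Mf\simeq Mf\sma\bS[X\odot G]$ is an equivalence of $E_\infty$-$Mf$-algebras and hence $x\in\pi_*(Mf)$ is carried to the corresponding element on the right, making the two localizations agree. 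The last equivalence is \cref{rmk-loc-smash} applied with its ``$R$'' taken to be $Mf$ and its ``$T$'' taken to be $\bS[X\odot G]$. All the equivalences are naturally equivalences of $E_\infty$-$Mf$-algebras, so in particular of $E_\infty$-$R$-algebras.

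The first equivalence is the one requiring care, and it is where I expect the only real work. It says that tensoring over $R$ with the connected pointed space $X$ commutes with inverting the homotopy element $x\in\pi_*(Mf)$. This is \cref{cor-tensor-localization}, but used in its relative form over $R$ and for the source $Mf$ rather than for the unit of $\CAlg_R$: by \cref{ex-invsolid} the algebra $(Mf)[x^{-1}]$ is solid over $Mf$, so the unit $Mf\to (Mf)[x^{-1}]$ satisfies $X$-base change in $\CAlg_R$, i.e. $(X\otimes_R Mf)\sma_{Mf}(Mf)[x^{-1}]\xrightarrow{\ \sim\ }X\otimes_R(Mf)[x^{-1}]$; the source is then identified with $(X\otimes_R Mf)[x^{-1}]$ by \cref{lemma-loc-smashing} applied to the $E_\infty$-$Mf$-algebra $X\otimes_R Mf$ (whose $Mf$-algebra structure comes from the basepoint of $X$, \cref{rmk-mfunit}). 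The needed relative version of \cref{cor-tensor-localization}/\cref{prop-solid} is obtained by rerunning those proofs with $\bS$ and $\sma$ replaced by $R$ and $\sma_R$ and with the solid algebra taken over $Mf$, as licensed by the parenthetical remark at the start of \cref{section-base-rings}. (Alternatively, $X\otimes_R (Mf)[x^{-1}]\simeq (X\otimes_R Mf)[x^{-1}]$ can be checked by hand: for $B\in\CAlg_R$, the $X\otimes_R$-adjunction identifies $\Map_{\CAlg_R}(X\otimes_R (Mf)[x^{-1}],B)$ with $\Map_\S(X,\Map_{\CAlg_R}((Mf)[x^{-1}],B))$, and the universal property of localization shows that $\Map_{\CAlg_R}((Mf)[x^{-1}],B)$ is a union of path components of $\Map_{\CAlg_R}(Mf,B)$; since $X$ is connected, a map $X\to\Map_{\CAlg_R}(Mf,B)$ factors through that sub-union exactly when the basepoint does, which is precisely the condition defining maps out of $(X\otimes_R Mf)[x^{-1}]$, so Yoneda gives the claim.)

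The main point to keep in mind throughout is that the tensor $\otimes_R$ in $\CAlg_R$ is not the absolute tensor in $\CAlg(\Sp)$ — the forgetful functor $\CAlg_R\to\CAlg(\Sp)$ does not preserve colimits, e.g. it does not preserve the initial object, so $S^1\otimes_R R\simeq R$ while $S^1\otimes R=THH(R)$ need not be $R$ — and so the localization statement must be used in exactly the $\CAlg_R$-relative form matching the $\otimes_R$ that appears in \cref{thm-tensorthom}. Once that is arranged, the corollary follows with no further effort.
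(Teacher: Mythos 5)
Your proof is correct and follows essentially the same route as the paper, which deduces the corollary by combining exactly \cref{cor-tensor-localization}, \cref{thm-tensorthom} and \cref{rmk-loc-smash}. Your extra care in running the localization/solidity argument relative to $R$ (so that it matches the tensor $\otimes_R$ of $\CAlg_R$) is precisely what the paper licenses by its parenthetical remark at the start of \cref{section-base-rings}, so there is no substantive difference in approach.
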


Taking $R=\bS$ and the Thom spectrum to be trivial, we get:

\begin{cor} \label{thm-sgx} Let $G$ be an $E_\infty$-group and $x\in \pi_*(\bS[G])$. Let $X$ be a connected pointed space. Then
\[X\otimes (\bS[G][x^{-1}]) \simeq \bS[G][x^{-1}] \sma \bS[X\odot G]\]
as $E_\infty$-ring spectra.
\end{cor}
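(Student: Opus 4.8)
The plan is to recognise the statement as the special case $R=\bS$ of the previous corollary, with the input Thom spectrum taken to be trivial, and — equivalently, and perhaps more transparently — to assemble it directly from \cref{ex-thom-suspension}, \cref{cor-tensor-localization} and \cref{rmk-loc-smash}. First I would invoke \cref{ex-trivialmult} with $R=\bS$: the $E_\infty$-map $G\to\Pic(\bS)$ constant at the unit $\bS$ has Thom $E_\infty$-ring spectrum $Mf\simeq \bS[G]$, since $R[-]=R\sma\bS[-]$ specialises to $\bS[-]$ when $R=\bS$. I would then note that $\CAlg_\bS=\CAlg(\Sp)$, so that the tensor $\otimes_\bS$ of $E_\infty$-$\bS$-algebras over spaces is literally the tensor $\otimes$ of $E_\infty$-ring spectra over spaces. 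Under these identifications, \cref{ex-thom-suspension} (which is itself an instance of \cref{thm-tensorthom}) reads
\[X\otimes\bS[G]\simeq\bS[G]\sma\bS[X\odot G]\]
for the $E_\infty$-group $G$ and any pointed space $X$.

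Next, given $x\in\pi_*(\bS[G])$, I would apply \cref{cor-tensor-localization} to the $E_\infty$-ring spectrum $\bS[G]$ and the class $x$; this step is where the hypothesis that $X$ is connected is used, and it yields $(X\otimes\bS[G])[x^{-1}]\simeq X\otimes(\bS[G][x^{-1}])$. Combining with the previous display gives
\[X\otimes(\bS[G][x^{-1}])\simeq(\bS[G]\sma\bS[X\odot G])[x^{-1}].\]
Finally I would apply \cref{rmk-loc-smash} with $R=\bS[G]$, $T=\bS[X\odot G]$ and the class $x\in\pi_*(\bS[G])$ to move the localisation into the first smash factor, obtaining $(\bS[G]\sma\bS[X\odot G])[x^{-1}]\simeq\bS[G][x^{-1}]\sma\bS[X\odot G]$, which is exactly the asserted equivalence of $E_\infty$-ring spectra.

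There is no genuine obstacle here: each ingredient has already been established, and the argument is a chain of substitutions. The only points deserving a sentence of care are the identification of $\bS[G]$ as the Thom spectrum of the trivial map (\cref{ex-trivialmult}), the compatibility of the two meanings of $\otimes$ coming from $\CAlg_\bS=\CAlg(\Sp)$, and the bookkeeping that the connectivity assumption on $X$ enters only through the invocation of \cref{cor-tensor-localization} (the Thom-spectrum step \cref{thm-tensorthom} needing merely that $X$ be pointed). One could equally well simply cite the preceding corollary and specialise $R$ to $\bS$, but spelling out the chain above keeps the proof self-contained and uses only labelled results.
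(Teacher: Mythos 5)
Your proposal is correct and matches the paper's argument: the paper obtains this corollary by specializing the preceding corollary (with $R=\bS$ and the trivial Thom spectrum $Mf\simeq\bS[G]$), which is itself assembled from exactly the same three ingredients you cite — \cref{thm-tensorthom} (via \cref{ex-thom-suspension}), \cref{cor-tensor-localization}, and \cref{rmk-loc-smash}. Spelling out the chain, including where connectivity of $X$ enters, is just an unpacking of the paper's one-line deduction.
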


\begin{ex} \label{ex-s0} Let $KU$ denote the periodic complex topological $K$-theory $E_\infty$-ring spectrum. Snaith \cite{snaith79}, \cite{snaith81} proved that $KU\simeq \bS[K(\Z,2)][x^{-1}]$ as homotopy commutative ring spectra (i.e. commutative monoids in the homotopy category of spectra), where $x\in \pi_2 \bS[K(\Z,2)]$ is induced by the fundamental class in $K(\Z,2)$. See \cite[6.5.1]{lurie-chromatic2} for one improvement of such an equivalence to an equivalence of $E_\infty$-ring spectra. \cref{thm-sgx} gives an equivalence of $E_\infty$-ring spectra
\[X\otimes KU\simeq KU \sma \bS[X\odot K(\Z,2)]\]
for any connected pointed space $X$. 
In \cite{stonek-thhku}, the second-named author worked in a model-categorical setting and got that description of $X\otimes KU$ when $X$ is an $n$-sphere or an $n$-torus, $n\geq 1$ (an inductive proof similar to the one of \cref{thm-s1enough} would have allowed the author to obtain the above formula for $X\otimes KU$ in that same setting). A different description as a free $E_\infty$-$KU$-algebra was also given; that description involves properties special to $KU$ which do not generalize to other Thom spectra with a homotopy element inverted, so they are not recovered here.

Note that the formula above proves that $-\otimes KU$ is a stable invariant, in the sense of Remarks \ref{rmk-odotstable} and \ref{rmk-otimesstable}. This was observed in \cite[5.4]{lindenstrauss-richter}.

To conclude this example, let us remark that the map $\bS[K(\Z,2)]\to KU$ which inverts $x$ does not satisfy $S^0$-base change, even though it satisfies $X$-base change for connected $X$ by \cref{cor-tensor-localization}. Indeed, the spectra $S^0\otimes KU\simeq KU \sma KU$ and $KU \sma K(\Z,2)_+$ have different homotopy groups, see e.g. \cite[17.34, 16.30]{switzer}. This is the example used in \cref{rmk-notclosed} to prove that base change is not closed under %
pullbacks of spaces. 
\end{ex}

\begin{ex} In \cite[1.2]{westerland}, Westerland gives a higher analog of Snaith's theorem. First, recall that if $p$ is a prime and $E_n$ denotes the $n$-th Morava $E$-theory spectrum at the prime $p$, then $E_1\simeq KU_p$, the $p$-completed periodic complex $K$-theory spectrum. The extended Morava stabilizer group $\G_n$ acts on $E_n$, and $E_n^{h\G_n}\simeq L_{K(n)}\bS$, the $K(n)$-local sphere; here $K(n)$ denotes the $n$-th Morava $K$-theory at the prime $p$. The extended Morava stabilizer group has a subgroup denoted $S\G_n^{\pm}$ by Westerland: he establishes an equivalence of $E_\infty$-ring spectra
\[E_n^{hS\G_n^{\pm}} \simeq (L_{K(n)} \bS[K(\Z_p,n+1)])[\rho_n^{-1}]\]
where $\rho_n$ is a higher analogue of the Bott element and $p$ is odd.
\cref{cor-tensor-localization} gives a first modest step towards the calculation of $X\otimes E_n^{hS\G_n^{\pm}}$ where $X$ is a connected pointed space, i.e. its calculation reduces to the determination of $X \otimes L_{K(n)} \bS[K(\Z_p,n+1)]$.
\end{ex}

\begin{ex} \label{ex-mup2} Let $MUP$ denote the periodic complex bordism $E_\infty$-ring spectrum. In \cite{snaith81}, Snaith proved that $MUP\simeq \bS[BU][x^{-1}]$ as homotopy commutative ring spectra, where $x\in \pi_2\bS[BU]$. %
This suggests a computation of $X\otimes MUP$ for $X$ a connected pointed space using \cref{thm-sgx}, but it turns out that one cannot proceed as straightforwardly as in \cref{ex-s0} because this equivalence of homotopy commutative ring spectra \emph{cannot} be lifted to an equivalence of $E_\infty$-ring spectra, though it can be lifted to an equivalence of $E_2$-ring spectra \cite{hahn-yuan}. Of course, \cref{thm-sgx} does give an equivalence of $E_\infty$-ring spectra \[X\otimes \bS[BU][x^{-1}] \simeq \bS[BU][x^{-1}] \sma \bS[X\odot BU].\]
Since $THH$ is actually an invariant of $E_1$-ring spectra and $MUP$ and $\bS[BU][x^{-1}]$ are equivalent as $E_1$-ring spectra, then as $BBU\simeq SU$ we get an equivalence of spectra
\[THH(MUP) \simeq MUP \sma \Sigma^\infty_+(SU).\]
In \cref{ex-mup} we proved that $THH(MUP)\simeq MUP \sma \bS[U]$ as $E_\infty$-ring spectra. Putting these two results together, we conclude that there is an equivalence of spectra
\[MUP \sma SU_+ \simeq MUP \sma U_+.\]
In other words, the (unreduced) $MUP$-homology groups of $SU$ and of $U$ are abstractly isomorphic.

In fact, one can compute these groups. To compute $MU_*(U)$, one can use the Atiyah-Hirzebruch spectral sequence $E^2_{*,*}=\widetilde{H}_*(U,\pi_*(MU))\Rightarrow \widetilde{MU}_*(U)$. Since $\pi_*(MU)$ is free over $\Z$, the $E_2$-page is $\widetilde{H}_*(U)\otimes_\Z \pi_*(MU) %
\cong \widetilde{E}(y_1,y_3,\dots) \otimes_\Z P(x_2,x_4,\dots)$ where $|y_i|=i$ and $|x_j|=j$; here $\widetilde{E}$ denotes a non-unital exterior algebra over $\Z$ and $P$ denotes a polynomial algebra over $\Z$.
Thus, the $E_2$-page has a checkerboard pattern and the spectral sequence collapses at that page. We get that $MU_*(U)\cong \widetilde{MU}_*(U)\oplus MU_*\cong E(y_1,y_3,\dots) \otimes_\Z P(x_2,x_4,\dots)$ where $E$ denotes a (unital) exterior algebra, and therefore,
\[MUP_*(U)\cong E(y_1,y_3,\dots) \otimes_\Z P(x_2,x_4,\dots)[x_2^{-1}].\]
The description of $MUP_*(SU)$ is the same but without the $y_1$ generator. By inspection, both $MUP_*(U)$ and $MUP_*(SU)$ have a direct sum of countably many copies of $\Z$ in each degree, 
so they are indeed abstractly isomorphic.
\end{ex}

\subsection{Galois extensions of \texorpdfstring{$E_{\infty}$}{E\textunderscore infty}-ring spectra} \label{section-galois}

Rognes \cite{rognes-galois} generalized the theory of Galois extensions of commutative rings to the framework of $E_{\infty}$-ring spectra. This notion has numerous applications, with a considerable source of examples coming from chromatic homotopy theory. In what follows, we will establish base change for a large class of Galois extensions. %

Let $G$ be a topological group. Recall from the discussion before \cref{prop-bg} that the $\infty$-category of objects of $\C$ with $G$-action is given by $\Fun(BG,\C)$. Whenever $X$ is a space and $A$ is an $E_\infty$-ring spectrum with $G$-action, then $X\otimes A$ is also an $E_\infty$-ring spectrum with $G$-action.

\begin{defn}
	 Let $f:A\to B$ be a map of $E_{\infty}$-ring spectra and $G$ be a finite group. Suppose, in addition, that $B$ has a $G$-action given by a functor $B:BG\to\CAlg_A$. The map $f$ is a \emph{$G$-Galois extension} if
	 \begin{enumerate}
	 	\item The canonical map $A\to B^{hG}$ is an equivalence.
	 	\item The map of $E_\infty$-$A$-algebras $B\wedge_A B\to \prod_G B$ induced by the morphisms $\{B\wedge_A B\xrightarrow{\id\wedge g}B\wedge_AB\xrightarrow{\mu} B\}_{g\in G}$ is an equivalence.
	 \end{enumerate}
	 The map $f$ is said to be a \emph{faithful $G$-Galois extension} if the functor $-\wedge_AB: \Mod_A\to \Mod_B$ is conservative, i.e. reflects equivalences.
\end{defn}
The first condition is analogous to taking fixed fields in classical Galois theory, while the second corresponds to the requirement of the extension being unramified.

Mathew proved that when $A\to B$ is a faithful Galois extension, it satisfies descent:

\begin{prop}\cite[Corollary 4.2 and (6)]{mathew-thh}\label{prop-faithful_descent}
	Let $A\to B$ be a faithful $G$-Galois extension. There is an adjoint equivalence of symmetric monoidal $\infty$-categories
		\[ 	\xymatrix@C+=1.5cm{ \Mod_B(\Fun(BG,\Sp))\ar@<.5ex>[r]^-{(-)^{hG}} & \Mod_A
\ar@<.5ex>[l]^-{-\wedge_AB}}.\]
Moreover, it yields an equivalence of $\infty$-categories
\[\Fun(BG,\CAlg(\Sp))_{B/} \simeq\CAlg(\Mod_B(\Fun(BG,\Sp))) \simeq \CAlg_A.\] 
\end{prop}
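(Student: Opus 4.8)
The plan is to exhibit $-\wedge_A B$ as a symmetric monoidal functor into $\Mod_B(\Fun(BG,\Sp))$, to show that it is an equivalence by Galois descent, and then to pass to commutative algebra objects; the whole statement is essentially a repackaging of Galois descent for a faithful $G$-Galois extension.

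First I would construct the functors. The hypothesis that $B$ carries a $G$-action through $\CAlg_A$ says precisely that $B$ is a commutative algebra object of the pointwise symmetric monoidal $\infty$-category $\Fun(BG,\Sp)$, together with a morphism from $A$ equipped with the trivial $G$-action. Extension of scalars along $A\to B$ inside $\Fun(BG,\Sp)$ is symmetric monoidal (\cite[4.5.3.1]{ha}), and so is the trivial-action functor $\Sp\to\Fun(BG,\Sp)$; composing them gives a symmetric monoidal functor
\[\Mod_A=\Mod_A(\Sp)\longrightarrow \Mod_A(\Fun(BG,\Sp))\simeq\Fun(BG,\Mod_A)\longrightarrow \Mod_B(\Fun(BG,\Sp))\]
whose underlying functor sends $M$ to $M\wedge_A B$ with $G$ acting through the $B$-factor, and whose right adjoint restricts the $B$-module structure to $A$ and then takes the limit over $BG$, i.e. is $N\mapsto N^{hG}$. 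Since the inverse of a symmetric monoidal equivalence is again symmetric monoidal, once $-\wedge_A B$ is known to be an equivalence the functor $(-)^{hG}$ automatically inherits a symmetric monoidal structure, and the two are adjoint.

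To prove the equivalence I would run a comonadic descent argument on $\Mod_B$. The functor $-\wedge_A B\colon\Mod_A\to\Mod_B$ is conservative because the extension is faithful, and it preserves all limits because a $G$-Galois extension is dualizable over its base (\cite{rognes-galois}); hence by the comonadic form of the $\infty$-categorical Barr--Beck theorem (\cite[4.7.3.5]{ha}, applied to the opposite categories) it is comonadic, identifying $\Mod_A$ with comodules over the comonad $T(N)=(\mathrm{res}\,N)\wedge_A B$ on $\Mod_B$. Using the Galois axiom $B\wedge_A B\simeq\prod_{g\in G}B$ one computes $T(N)\simeq N\wedge_B(B\wedge_A B)\simeq\prod_{g\in G}g^{*}N$, where $g^{*}N$ denotes $N$ with its $B$-module structure twisted along $g$; tracing through the comultiplication of $T$ — induced by the group multiplication of $G$ together with the $A$-linearity of the action on $B$ — shows that $T$ is the cofree-$G$-object comonad, whose comodules are exactly objects of $\Mod_B$ equipped with a semilinear $G$-action, i.e. objects of $\lim_{BG}\Mod_B\simeq\Mod_B(\Fun(BG,\Sp))$. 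I expect this bookkeeping to be the main obstacle: one must match $T$ (equivalently, the Amitsur/cobar cosimplicial resolution $\Mod_A\simeq\lim_{\Delta}\Mod_{B^{\wedge_A(\bullet+1)}}$ attached to the Galois extension) with the limit diagram computing $\lim_{BG}\Mod_B$ along the standard simplicial resolution of $BG$, keeping careful track of the twists produced by the $G$-action; this is where both Galois axioms and the compatibility of the action with the $A$-algebra structure are genuinely used. Alternatively, one may simply invoke that a faithful $G$-Galois extension admits descent and that Galois descent gives $\Mod_A\simeq\lim_{BG}\Mod_B$, of which the module statement is a reformulation.

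Finally I would deduce the algebra statement by applying $\CAlg(-)$ to the symmetric monoidal equivalence $\Mod_A\simeq\Mod_B(\Fun(BG,\Sp))$, which yields $\CAlg_A=\CAlg(\Mod_A)\simeq\CAlg(\Mod_B(\Fun(BG,\Sp)))$. Since $\CAlg(\Mod_B(\mathcal{V}))\simeq\CAlg(\mathcal{V})_{B/}$ for any commutative algebra $B$ in a symmetric monoidal $\infty$-category $\mathcal{V}$ (\cite[3.4.1.7]{ha}), and $\CAlg(\Fun(BG,\Sp))\simeq\Fun(BG,\CAlg(\Sp))$ for the pointwise symmetric monoidal structure (commutative algebras in a cotensor of symmetric monoidal $\infty$-categories are computed objectwise), chaining the equivalences gives
\[\CAlg_A\simeq\CAlg(\Fun(BG,\Sp))_{B/}\simeq\Fun(BG,\CAlg(\Sp))_{B/},\]
as claimed.
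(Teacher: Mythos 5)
Your argument is correct, and it is essentially the proof of the cited source: the paper itself gives no proof of this proposition, importing it verbatim from \cite[Corollary 4.2 and (6)]{mathew-thh}, where Mathew argues exactly as you do — faithfulness plus dualizability of $B$ over $A$ (\cite{rognes-galois}) make $-\wedge_AB$ comonadic/descendable, Galois descent identifies $\Mod_A$ with $\lim_{BG}\Mod_B\simeq\Mod_B(\Fun(BG,\Sp))$, and the algebra statement follows by applying $\CAlg(-)$ together with $\CAlg(\Mod_B(\mathcal{V}))\simeq\CAlg(\mathcal{V})_{B/}$. The one step you rightly flag as nontrivial, matching the Amitsur cosimplicial resolution with the limit over the simplicial resolution of $BG$ (equivalently, identifying the comonad's comodules with $G$-semilinear $B$-modules), is precisely the standard Galois-descent bookkeeping carried out in that reference, so no gap remains.
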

\begin{rmk}
	The construction $-\wedge_A B$ gives, a priori, a functor from $\Mod_A$ (resp. $\CAlg_A$) to $\Mod_B$ (resp. $\CAlg_B$). Since $-\wedge_A B$ is functorial in $B$, we can thus see this as a functor to the corresponding category of $G$-equivariant objects.
\end{rmk}

Next, we remark that in the context of faithful $G$-Galois extensions, the $X$-base change property and the compatibility of homotopy fixed points with tensoring are equivalent statements. This is an adaptation of \cite[4.3]{mathew-thh} which covers the case $X=S^1$; the same proof gives the following:

\begin{prop} \label{prop-x-galois}
	Let $X$ be a pointed space and $A\to B$ a faithful $G$-Galois extension. The following are equivalent:
	\begin{enumerate}
		\item $A\to B$ satisfies $X$-base change, i.e. the comparison map $(X\otimes A)\wedge_AB\to X\otimes B$ is an equivalence of $E_\infty$-$B$-algebras,
		\item $X\otimes A\to X\otimes B$ is a faithful $G$-Galois extension,
		\item The map $X\otimes A \simeq ((X\otimes A) \sma_A B)^{hG}\to (X\otimes B)^{hG}$ is an equivalence of $E_\infty$-$A$-algebras.
	\end{enumerate}
\end{prop}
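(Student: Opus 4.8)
The plan is to interpolate the comparison map of (1) through the extension of scalars of $B$ along the unit $A\to X\otimes A$, and then to run the Galois-descent argument of Mathew \cite{mathew-thh} (who treats the case $X=S^1$) essentially verbatim. Write $\Phi\coloneqq (X\otimes A)\wedge_A(-)\colon\CAlg_A\to\CAlg_{X\otimes A}$ for this extension of scalars; it is a symmetric monoidal left adjoint, and since finite products of modules coincide with finite coproducts it also preserves finite products. Endow $X\otimes B$ with the $G$-action obtained by postcomposing the action functor $B\colon BG\to\CAlg_A$ with $X\otimes(-)$; then the $X$-base change map of (1) is the natural map $\eta_B\colon\Phi(B)\to X\otimes B$ of $E_\infty$-$(X\otimes A)$-algebras, and it is $G$-equivariant by naturality.

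The crux is the following ``universal'' statement: \emph{for every faithful $G$-Galois extension $A\to B$ and every pointed space $X$, the unit $X\otimes A\to\Phi(B)$ is itself a faithful $G$-Galois extension.} The unramifiedness condition holds because $\Phi$ preserves the pushout $B\wedge_A B$ and finite products, giving $\Phi(B)\wedge_{X\otimes A}\Phi(B)\simeq\Phi(B\wedge_A B)\simeq\Phi(\prod_G B)\simeq\prod_G\Phi(B)$, and by monoidality of $\Phi$ this equivalence is the canonical comparison map. The fixed-point condition follows from the descent equivalence of \cref{prop-faithful_descent}, which yields $(M\wedge_A B)^{hG}\simeq M$ for every $A$-module $M$: taking $M=X\otimes A$ gives $\Phi(B)^{hG}\simeq X\otimes A$, and one checks this is the canonical map. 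Faithfulness is inherited because $(-)\wedge_{X\otimes A}\Phi(B)$ and $(-)\wedge_A B$ agree on underlying spectra, so conservativity of the latter forces conservativity of the former.

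Granting this, (1)$\Leftrightarrow$(2) becomes formal: if $\eta_B$ is an equivalence it transports the faithful $G$-Galois structure from $\Phi(B)$ onto $X\otimes B$; conversely, when $X\otimes A\to X\otimes B$ is faithful $G$-Galois, $\eta_B$ is a $G$-equivariant morphism of faithful $G$-Galois extensions of $X\otimes A$, hence an equivalence — to see this one base-changes $\eta_B$ along $X\otimes A\to X\otimes B$, identifies its source with $\prod_G(X\otimes B)$ via $B\wedge_A(X\otimes B)\simeq(B\wedge_A B)\wedge_B(X\otimes B)$, checks that the resulting endomorphism of $\prod_G(X\otimes B)$ is an equivalence, and invokes conservativity of $(-)\wedge_{X\otimes A}(X\otimes B)$. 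For (1)$\Leftrightarrow$(3), one uses \cref{prop-faithful_descent} in the form of the inverse equivalences $\CAlg_A\simeq\CAlg(\Mod_B(\Fun(BG,\Sp)))$, $D\mapsto D\wedge_A B$ and $C\mapsto C^{hG}$: the object $X\otimes A$ goes to $\Phi(B)$, the object $X\otimes B$ (with its $G$-action, and with $B$ acting $G$-equivariantly through the basepoint inclusion) goes back to $(X\otimes B)^{hG}$, and under these identifications $\eta_B$ becomes precisely the comparison map $X\otimes A\simeq((X\otimes A)\wedge_A B)^{hG}\to(X\otimes B)^{hG}$ of (3); since the functor is an equivalence, one side is an equivalence iff the other is.

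I expect the main obstacle to be bookkeeping rather than a new idea: one must verify that the $X$-base change map really is $\eta_B$ and is $G$-equivariant, that $\Phi$ preserves exactly the (co)limits invoked and carries the Galois comparison map of $A\to B$ to that of $X\otimes A\to\Phi(B)$, and that a $G$-equivariant morphism of faithful $G$-Galois extensions of a fixed base is an equivalence. All of these appear in Mathew's treatment of the $X=S^1$ case, so the essential content — the descent equivalence of \cref{prop-faithful_descent} together with the universal faithful-Galois statement above — is already in hand, and the argument goes through with $S^1$ replaced by an arbitrary pointed space $X$.
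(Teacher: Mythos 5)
Your argument is correct and is essentially the paper's proof: the paper simply invokes Mathew's proof of the $X=S^1$ case (\cite[4.3]{mathew-thh}) and observes that it goes through verbatim for an arbitrary pointed $X$, which is exactly what you have written out — base change of the faithful Galois structure along $A\to X\otimes A$, the descent equivalence of \cref{prop-faithful_descent}, and the identification of the $X$-base change map with the comparison map of $G$-equivariant $B$-algebras. (Your detour through the lemma that a $G$-equivariant map of faithful $G$-Galois extensions over a fixed base is an equivalence could be shortcut by noting that (2)$\Rightarrow$(3) is immediate from the fixed-point condition in the definition, but this is only a matter of economy, not correctness.)
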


 It is known that being a faithful $G$-Galois extension it is not sufficient to guarantee $X$-base change; see \cite[Section 5]{mathew-thh} for an explicit example where $X=S^1$. Nonetheless, Mathew shows in his Theorem 4.5 that $S^1$-base change does hold for faithful $G$-Galois extensions satisfying an additional condition on rational algebraic $K$-theory. In light of \cref{thm-s1enough}, we can extend his result to get $X$-base change for all connected pointed spaces $X$. In the following, $L^f_n$ denotes Miller's finite $L_n$-localization \cite{miller}.

\begin{cor}\label{cor-galois-change}
	Let $f:A\to B$ be a faithful $G$-Galois extension such that $K_0(f)\otimes\Q$ is surjective. Suppose that the $p$-localization $A_{(p)}$ is $L^f_n$-local for some $n$ (depending on $p$), for all primes $p$. Then $f$ satisfies $X$-base change for all connected pointed spaces $X$. %
\end{cor}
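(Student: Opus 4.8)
The plan is to reduce to the case $X=S^1$ and then appeal to \cref{thm-s1enough}. The point is that $S^1$-base change for a faithful $G$-Galois extension satisfying the stated hypothesis on algebraic $K$-theory is precisely the content of Mathew's Theorem 4.5 in \cite{mathew-thh}, so the only work left is the (purely formal) passage from $S^1$ to all connected pointed spaces, which we have already isolated in \cref{thm-s1enough}.

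First I would record that the hypotheses here — $f$ a faithful $G$-Galois extension, $K_0(f)\otimes\Q$ surjective, and $A_{(p)}$ being $L^f_n$-local for some $n$ (depending on $p$) for every prime $p$ — are exactly those of \cite[Theorem 4.5]{mathew-thh}. That theorem gives that the canonical map $THH(A)\to THH(B)^{hG}$ is an equivalence of $E_\infty$-$A$-algebras. By \cref{prop-x-galois} applied with $X=S^1$, this is equivalent to $f$ satisfying $S^1$-base change, i.e. to the comparison map $(S^1\otimes A)\sma_A B\to S^1\otimes B$ being an equivalence of $E_\infty$-$B$-algebras.

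Second, I would invoke \cref{thm-s1enough} with $n=1$: since $f$ satisfies $S^1$-base change, it satisfies $X$-base change for every $0$-connected — that is, connected — pointed space $X$, which is the desired conclusion. The main obstacle is therefore not really located in this corollary: the substantive input is Mathew's $S^1$-base change theorem on one side, while our \cref{thm-s1enough} on the other side handles the inductive cell-by-cell argument using the closure of the base change condition under the relevant colimits (\cref{lemma-basechange}). The one thing to double-check is the translation between the ``$THH(A)\to THH(B)^{hG}$ is an equivalence'' formulation of Mathew's theorem and the ``$S^1$-base change'' formulation we need, which is supplied verbatim by \cref{prop-x-galois}; and, secondarily, that the $K$-theoretic and $L^f_n$-locality hypotheses are transcribed correctly from \cite{mathew-thh}.
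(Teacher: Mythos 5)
Your proposal is correct and follows essentially the same route as the paper: Mathew's Theorem 4.5 (translated, if needed, via \cref{prop-x-galois}) supplies $S^1$-base change under the stated $K$-theoretic and $L^f_n$-locality hypotheses, and \cref{thm-s1enough} with $n=1$ then upgrades this to $X$-base change for all connected pointed spaces. No gaps.
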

In particular, the other two equivalent conditions of \Cref{prop-x-galois} are satisfied as well.
\begin{rmk}
	The surjectivity condition on $K$-theory is essential as it guarantees that the comparison map on homotopy fixed points is an $L^f_n$-local equivalence \cite[Theorem 5.6]{clausen}. The additional assumption on $A$ is there only so that $A\to B$ is already a map of $L^f_n$-local spectra.
\end{rmk}
\begin{ex} As pointed out in \cite[4.6]{mathew-thh}, many Galois extensions satisfy the conditions of \cref{cor-galois-change}, including:
	\begin{enumerate}
		\item The $C_2$-Galois extension given by the complexification $KO\to KU$,
		\item The $C_{p-1}$-Galois extension given by the $p$-adic completion of the inclusion of the $p$-local Adams summand $L\to KU_{(p)}$, %
		\item The $G$-Galois extension $E_n^{hG}\to E_n$, where $E_n$ is $n$-th Morava $E$-theory and $G$ is a finite subgroup of the extended Morava stabilizer group,
		\item Any Galois extension of the different flavors of topological modular forms $TMF[1/n]$, $Tmf_0(n)$, or $Tmf_1(n)$.
	\end{enumerate}
\end{ex}

\bibliographystyle{alpha}
\bibliography{main.bib}

\newcommand{\etalchar}[1]{$^{#1}$}
\begin{thebibliography}{EKMM97}

\bibitem[ABG{\etalchar{+}}14]{abghr-infty}
Matthew Ando, Andrew~J. Blumberg, David Gepner, Michael~J. Hopkins, and Charles
  Rezk.
\newblock An {$\infty$}-categorical approach to {$R$}-line bundles,
  {$R$}-module {T}hom spectra, and twisted {$R$}-homology.
\newblock {\em J. Topol.}, 7(3):869--893, 2014.

\bibitem[ABG18]{abg}
Matthew Ando, Andrew~J. Blumberg, and David Gepner.
\newblock Parametrized spectra, multiplicative {T}hom spectra and the twisted
  {U}mkehr map.
\newblock {\em Geom. Topol.}, 22(7):3761--3825, 2018.

\bibitem[ACB19]{barthel-antolin}
Omar Antol\'{\i}n-Camarena and Tobias Barthel.
\newblock A simple universal property of {T}hom ring spectra.
\newblock {\em J. Topol.}, 12(1):56--78, 2019.

\bibitem[AHL09]{ahl-ku-thom}
Vigleik Angeltveit, Michael~A. Hill, and Tyler Lawson.
\newblock The spectra {$ko$} and {$ku$} are not {T}hom spectra: an approach
  using {$THH$}.
\newblock In {\em New topological contexts for {G}alois theory and algebraic
  geometry ({BIRS} 2008)}, volume~16 of {\em Geom. Topol. Monogr.}, pages 1--8.
  Geom. Topol. Publ., Coventry, 2009.

\bibitem[BCD10]{bcd}
Morten Brun, Gunnar Carlsson, and Bj{\o}rn~Ian Dundas.
\newblock Covering homology.
\newblock {\em Adv. Math.}, 225(6):3166--3213, 2010.

\bibitem[BCS10]{bcs}
Andrew~J. Blumberg, Ralph~L. Cohen, and Christian Schlichtkrull.
\newblock Topological {H}ochschild homology of {T}hom spectra and the free loop
  space.
\newblock {\em Geom. Topol.}, 14(2):1165--1242, 2010.

\bibitem[Blu10]{blumberg}
Andrew~J. Blumberg.
\newblock Topological {H}ochschild homology of {T}hom spectra which are
  {$E_\infty$} ring spectra.
\newblock {\em J. Topol.}, 3(3):535--560, 2010.

\bibitem[B{\"o}k85]{bokstedt}
Marcel B{\"o}kstedt.
\newblock Topological {H}ochschild homology, 1985.
\newblock Universit{\"a}t Bielefeld. Unpublished.

\bibitem[BSS17]{bss-generalizedthom}
Samik Basu, Steffen Sagave, and Christian Schlichtkrull.
\newblock Generalized {T}hom spectra and their topological {H}ochschild
  homology.
\newblock {\em Journal of the Institute of Mathematics of Jussieu}, page
  1–44, 2017.

\bibitem[CDD11]{cdd}
Gunnar Carlsson, Christopher~L. Douglas, and Bj\o rn~Ian Dundas.
\newblock Higher topological cyclic homology and the {S}egal conjecture for
  tori.
\newblock {\em Adv. Math.}, 226(2):1823--1874, 2011.

\bibitem[CMNN20]{clausen}
Dustin Clausen, Akhil Mathew, Niko Naumann, and Justin Noel.
\newblock Descent in algebraic {$K$}-theory and a conjecture of
  {A}usoni-{R}ognes.
\newblock {\em J. Eur. Math. Soc. (JEMS)}, 22(4):1149--1200, 2020.

\bibitem[Day72]{day-reflection}
Brian Day.
\newblock A reflection theorem for closed categories.
\newblock {\em J. Pure Appl. Algebra}, 2(1):1--11, 1972.

\bibitem[EKMM97]{ekmm}
A.~D. Elmendorf, I.~Kriz, M.~A. Mandell, and J.~P. May.
\newblock {\em Rings, modules, and algebras in stable homotopy theory},
  volume~47 of {\em Mathematical Surveys and Monographs}.
\newblock American Mathematical Society, Providence, RI, 1997.
\newblock With an appendix by M. Cole.

\bibitem[GGN15]{ggn}
David Gepner, Moritz Groth, and Thomas Nikolaus.
\newblock Universality of multiplicative infinite loop space machines.
\newblock {\em Algebr. Geom. Topol.}, 15(6):3107--3153, 2015.

\bibitem[GH15]{gepner-haugseng}
David Gepner and Rune Haugseng.
\newblock Enriched {$\infty$}-categories via non-symmetric {$\infty$}-operads.
\newblock {\em Adv. Math.}, 279:575--716, 2015.

\bibitem[Gla16]{glasman}
Saul Glasman.
\newblock A spectrum-level {H}odge filtration on topological {H}ochschild
  homology.
\newblock {\em Selecta Math. (N.S.)}, 22(3):1583--1612, 2016.

\bibitem[HY20]{hahn-yuan}
Jeremy Hahn and Allen Yuan.
\newblock Exotic multiplications on periodic complex bordism.
\newblock {\em J. Topol.}, 13(4):1839--1852, 2020.

\bibitem[Kla18]{klang-thom}
Inbar Klang.
\newblock The factorization theory of {T}hom spectra and twisted nonabelian
  {P}oincar\'{e} duality.
\newblock {\em Algebr. Geom. Topol.}, 18(5):2541--2592, 2018.

\bibitem[Kuh04]{kuhn}
Nicholas~J. Kuhn.
\newblock The {M}c{C}ord model for the tensor product of a space and a
  commutative ring spectrum.
\newblock In {\em Categorical decomposition techniques in algebraic topology
  ({I}sle of {S}kye, 2001)}, volume 215 of {\em Progr. Math.}, pages 213--236.
  Birkh\"auser, Basel, 2004.

\bibitem[Lod89]{loday1989}
Jean-Louis Loday.
\newblock Op\'erations sur l'homologie cyclique des alg\`ebres commutatives.
\newblock {\em Invent. Math.}, 96(1):205--230, 1989.

\bibitem[LR20]{lindenstrauss-richter}
Ayelet Lindenstrauss and Birgit Richter.
\newblock Stability of {L}oday constructions, April 2020.
\newblock \href{https://arxiv.org/abs/1905.05619v2}{arXiv:1905.05619v2}.

\bibitem[Lur09]{htt}
Jacob Lurie.
\newblock {\em Higher topos theory}, volume 170 of {\em Annals of Mathematics
  Studies}.
\newblock Princeton University Press, Princeton, NJ, 2009.

\bibitem[Lur11]{dagvii}
Jacob Lurie.
\newblock Derived algebraic geometry {V}{I}{I}: {S}pectral schemes.
\newblock \url{http://www.math.ias.edu/~lurie/papers/DAG-VII.pdf}, 2011.

\bibitem[Lur17]{ha}
Jacob Lurie.
\newblock Higher algebra.
\newblock \url{http://www.math.ias.edu/~lurie/papers/HA.pdf}, September 2017.

\bibitem[Lur18]{lurie-chromatic2}
Jacob Lurie.
\newblock Elliptic {C}ohomology {I}{I}: {O}rientations.
\newblock \url{http://www.math.ias.edu/~lurie/papers/Elliptic-II.pdf}, April
  2018.

\bibitem[Mah79]{mahowald-thom}
Mark Mahowald.
\newblock Ring spectra which are {T}hom complexes.
\newblock {\em Duke Math. J.}, 46(3):549--559, 1979.

\bibitem[Mat17]{mathew-thh}
Akhil Mathew.
\newblock T{HH} and base-change for {G}alois extensions of ring spectra.
\newblock {\em Algebr. Geom. Topol.}, 17(2):693--704, 2017.

\bibitem[McC69]{mccord}
M.~C. McCord.
\newblock Classifying spaces and infinite symmetric products.
\newblock {\em Trans. Amer. Math. Soc.}, 146:273--298, 1969.

\bibitem[Mil92]{miller}
Haynes Miller.
\newblock Finite localizations.
\newblock {\em Bolet{\'i}n de la Soc. Matem{\'a}tica Mex. Segunda Ser.},
  37(1-2):383--389, 1992.

\bibitem[MM03]{mccarthy-minasian}
Randy McCarthy and Vahagn Minasian.
\newblock H{KR} theorem for smooth {$S$}-algebras.
\newblock {\em J. Pure Appl. Algebra}, 185(1-3):239--258, 2003.

\bibitem[MSV97]{msv}
J.~McClure, R.~Schw\"anzl, and R.~Vogt.
\newblock {$THH(R)\cong R\otimes S^1$} for {$E_\infty$} ring spectra.
\newblock {\em J. Pure Appl. Algebra}, 121(2):137--159, 1997.

\bibitem[NS18]{nikolaus-scholze}
Thomas Nikolaus and Peter Scholze.
\newblock On topological cyclic homology.
\newblock {\em Acta Math.}, 221(2):203--409, 2018.

\bibitem[Pir00]{pirashvili}
Teimuraz Pirashvili.
\newblock Hodge decomposition for higher order {H}ochschild homology.
\newblock {\em Ann. Sci. \'Ecole Norm. Sup. (4)}, 33(2):151--179, 2000.

\bibitem[Rog08]{rognes-galois}
John Rognes.
\newblock Galois extensions of structured ring spectra. {S}tably dualizable
  groups.
\newblock {\em Mem. Amer. Math. Soc.}, 192(898):viii+137, 2008.

\bibitem[Sch11]{schlichtkrull-higher}
Christian Schlichtkrull.
\newblock Higher topological {H}ochschild homology of {T}hom spectra.
\newblock {\em J. Topol.}, 4(1):161--189, 2011.

\bibitem[Sna79]{snaith79}
Victor~P. Snaith.
\newblock Algebraic cobordism and {$K$}-theory.
\newblock {\em Mem. Amer. Math. Soc.}, 21(221):vii+152, 1979.

\bibitem[Sna81]{snaith81}
Victor Snaith.
\newblock Localized stable homotopy of some classifying spaces.
\newblock {\em Math. Proc. Cambridge Philos. Soc.}, 89(2):325--330, 1981.

\bibitem[SS19]{sagave-schlichtkrull}
Steffen Sagave and Christian Schlichtkrull.
\newblock Virtual vector bundles and graded {T}hom spectra.
\newblock {\em Math. Z.}, 292(3-4):975--1016, 2019.

\bibitem[Sto20]{stonek-thhku}
Bruno Stonek.
\newblock Higher topological {H}ochschild homology of periodic complex
  {K}-theory.
\newblock {\em Topology Appl.}, 282:107302, 43, 2020.

\bibitem[Swi75]{switzer}
Robert~M. Switzer.
\newblock {\em Algebraic topology---homotopy and homology}.
\newblock Springer-Verlag, New York-Heidelberg, 1975.
\newblock Die Grundlehren der mathematischen Wissenschaften, Band 212.

\bibitem[Wes17]{westerland}
Craig Westerland.
\newblock A higher chromatic analogue of the image of {$J$}.
\newblock {\em Geom. Topol.}, 21(2):1033--1093, 2017.

\bibitem[WG91]{weibel-geller}
Charles~A. Weibel and Susan~C. Geller.
\newblock \'{E}tale descent for {H}ochschild and cyclic homology.
\newblock {\em Comment. Math. Helv.}, 66(3):368--388, 1991.

\end{thebibliography}
\end{document}